\@date \else {\vskip3ex \centering\footnotesize\@date\par\vskip1ex}\fi
\else \@footnotetext{\@setdate}\fi}
\newdimen\theight
\def\TeXref#1{%
             \leavevmode\vadjust{\setbox0=\hbox{{\tt
                     \quad\quad  {\small \textrm #1}}}%
             \theight=\ht0
             \advance\theight by \lineskip
             \kern -\theight \vbox to
             \theight{\rightline{\rlap{\box0}}%
             \vss}%
             }}%
\DeclareMathOperator{\Diff}{Diff}
\DeclareMathOperator{\depth}{depth}
\DeclareMathOperator{\OO}{O}
\DeclareMathOperator{\codim}{codim}
\DeclareMathOperator{\Hom}{Hom}
\newtheorem{theorem}{Theorem}[section]
\newtheorem{lemma}[theorem]{Lemma}
\newtheorem{proposition}[theorem]{Proposition}
\newtheorem{corollary}[theorem]{Corollary}
\theoremstyle{definition}
\newtheorem{definition}[theorem]{Definition}
\newtheorem{example}[theorem]{Example}
\newtheorem{examples}[theorem]{Examples}
\theoremstyle{remark}
\newtheorem{remark}[theorem]{Remark}
\newtheorem*{org}{Organization of the article}
\numberwithin{equation}{section}
\newcommand\chii{\raise2pt\hbox{$\chi$}}
\newcommand\phii{{\raise2pt\hbox{$\varphi$}}}
\newcommand\F{\mathcal{F}}
\newcommand\E{\mathcal{E}}
\newcommand\K{\mathcal{K}}
\newcommand\G{\mathcal{G}}
\newcommand\I{\mathcal{I}}
\newcommand\HH{\mathcal{H}}
\renewcommand\S{\mathbb{S}}
\newcommand\T{\mathbb{T}}
\newcommand\D{\mathbb{D}}
\newcommand\Z{\mathbb{Z}}
\newcommand{\R}{\mathbb{R}}
\newcommand\fF{\mathscr{F}}
\newcommand\fE{\mathscr{E}}
\newcommand\fH{\mathscr{H}}
\newcommand\fB{\mathscr{B}}
\newcommand{\SK}{\hbox{\sf S}_{\mathcal K}}
\newcommand{\SF}{\hbox{\sf S}_{\mathcal F}}
\newcommand{\SFp}{\hbox{\sf S}_{\mathcal F'}}
\newcommand{\SKDS}{\hbox{\sf S}_{{\mathcal K}_{D_{S}}}}
\newcommand{\SKTS}{\hbox{\sf S}_{{\mathcal K}_{T_{S}}}}
\newcommand{\rondp}{\hbox{\footnotesize $\circ$}}
\newcommand{\kappab}{\hbox{\boldmath{$\kappa$}}}
\newcommand{\qi}{\stackrel{\rm{q.i.}}{\hookrightarrow}}
\newcommand{\pr}{\mathop{\rm pr}}
\title{Cohomological Tautness of Singular Riemannian Foliations}
\author[J.I.~Royo Prieto]{Jos\'{e} Ignacio Royo Prieto}
\address{Department of Applied Mathematics\\ University of the Basque Country UPV/EHU\\ Pza. Ingeniero Torres Quevedo n.~1\\ 48013 Bilbao, Spain.}
\email{joseignacio.royo@ehu.eus}
\thanks{The first author was partially supported by the Spanish MINECO grant MTM2016-77642-C2-1-P and the Gobierno Vasco/Eusko Jaurlaritza Grant IT1094-16}
\author[M.~Saralegi Aranguren]{Martintxo Saralegi-Aranguren}
\address{F\`{e}d\`{e}ration CNRS\\  Nord-Pas-de-Calais FR 2956\\ UPRES-EA 2462 LML\\
Facult\'e Jean Perrin\\ Universit\'{e} d'Artois\\   Rue Jean Souvraz SP 18\\ 62 307 Lens Cedex, France.}
\email{saralegi@euler.univ-artois.fr}
\thanks{The second author was partially supported by the Spanish MINECO grant MTM2016-77642-C2-1-P}
\author[R.~Wolak]{Robert Wolak}
\address{Instytut Matematyki\\ Uniwersytet Jagiellonski\\
	ul. prof. Stanis{\l}awa {\L}ojasiewicza 6
	30-348 Krak\'ow,  Poland.}
\email{robert.wolak@im.uj.edu.pl}
\keywords{Singular Riemannian foliations, foliations, tautness}
\subjclass[2010]{53C12, 57R30, 37C85}
\date{\today}
\begin{document}

\begin{abstract}
	For a Riemannian foliation $\F$ on a compact manifold $M$, J.~A.~\'Alvarez L\'opez proved that the geometrical tautness of $\F$, that is, the existence of a Riemannian metric making all the leaves minimal submanifolds of $M$, can be characterized by the vanishing of a basic cohomology class $\kappab_M\in H^1(M/\F)$ (the \'Alvarez class). In this work we generalize this result to the case of a singular Riemannian foliation $\K$ on a compact manifold $X$. In the singular case, no bundle-like metric on $X$ can make all the leaves of $\K$ minimal. In this work, we prove that the \'Alvarez classes of the strata can be glued in a unique global  \'Alvarez class $\kappab_X\in H^1(X/\K)$.
	As a corollary, if $X$ is simply connected, then the restriction of $\K$ to each stratum is geometrically taut, thus generalizing a celebrated result of E.~Ghys for the regular case.
\end{abstract}

\maketitle

{\centering\footnotesize \em Dedicated to Professor Felipe Cano on his 60th birthday.\par}

\setcounter{tocdepth}{1}
\tableofcontents

\section{Introduction}

\subsection{Tautness and cohomology}\label{subsec:tautnesscohomology}
A foliation $\F$ on a manifold $M$ is said to be (geometrically) {\em taut} if there exists a metric such that every leaf of $\F$ is a minimal submanifold of $M$. Tautness is a relevant property of foliations, which has been extensively studied since the eighties and nineties of the last century. It is essentially a transverse property: A.~Haefliger proved that if $M$ is compact, the tautness of $\F$ depends only on its transverse structure, namely, on the holonomy pseudogroup of $\F$ (see \cite[Theorem~4.1]{haefliger80}).

In the case of Riemannian foliations (those admitting a bundle-like metric, that is, a metric whose orthogonal component is holonomy invariant), tautness is remarkably of topological nature, as the following results show. In \cite{masa}, X.~Masa proved that if $\F$ is an oriented and transversally oriented Riemannian foliation, then it is taut if and only if the top degree group of the basic cohomology is isomorphic to $\R$, as conjectured by Y.~Carri\`ere in his Ph.D. Thesis. In \cite{suso}, J.A.~\'Alvarez L\'opez defined the so-called {\em \'{A}lvarez class} (or tautness class) $\kappab_M\in H^1(M/\F)$ whose vanishing characterizes the tautness of a Riemannian foliation $\F$ on a compact manifold $M$. As a corollary, he removed the assumption of orientability of Masa's characterization. Another immediate consequence of \'Alvarez's result is that any Riemannian foliation on a simply connected and compact manifold $M$ is taut, which was proven previously by E.~Ghys in \cite[Th\'eor\`eme B]{ghys}. One more characterization is obtained by combining these results with F.~Kamber and Ph.~Tondeur's Poincar\'e duality property for the basic cohomology ~\cite[Theorem~3.1]{kamber-tondeur-dual}:
\begin{equation}\label{eq:tpdKT}
H^{*}(M/\F)\cong \Hom(H_{\kappa}^{q-*}(M/\F),\R)\;,
\end{equation}
where $q$ is the codimension of $\F$ and the $\kappa$-twisted basic cohomology $H^{*}_{\kappa}(M/\F)$ stands for the cohomology of the basic de Rham complex with the twisted differential $d_{\kappa}\omega=d\omega - \kappa\wedge\omega$. It follows that, under the assumptions of Masa's theorem, $\F$ is taut if and only if $H^0_{\kappa}(M/\F)\cong\R$. For an account of the history of tautness and cohomology of Riemannian foliations see \cite{tope2} and V.~Sergiescu's Appendix in \cite{molino}. 

\subsection{Tautness of strata of singular Riemannian foliations}
We are interested in finding the singular version of those results in the less explored framework of a singular Riemannian foliation (SRF, for short) $\K$ on a compact manifold $X$. We now summarize some results we have obtained in this direction.

One major difference with the regular case is that, in an SRF, geometrical tautness is not to be achieved globally: there exists no bundle-like metric on $X$ making all the leaves of $\K$  minimal submanifolds of $X$ (essentially, because they are of different dimensions, see \cite[p.~186]{tope1} and \cite{miquel-wolak}). It is natural, then, to focus on the foliations $\K_S$ induced by $\K$ on each stratum $S$, which are regular. Notice that strata may not be compact submanifolds of $X$. The mean curvature form for non-compact manifolds has a different behavior: Example 2.4 of \cite{cairns-escobales} describes a Riemannian foliation $\F$ on a non-compact manifold $M$ whose mean curvature form $\kappa$ is basic, but not closed, showing that the \'Alvarez class may not even be defined if $M$ is not compact.

Nevertheless, in \cite{tope1} and \cite{tope2} we proved that, for a certain class of foliations called CERFs, the \'Alvarez class is well defined and  the characterizations of tautness described above hold. CERFs are regular Riemannian foliations on possibly non-compact manifolds that can be suitably embedded in a regular Riemannian foliation on a compact manifold called zipper, and whose basic cohomology is computed by a compact saturated subset called reppiz.

The main point is that the singular strata of an SRF are CERFs. Although the compactness of $X$ does not imply the compactness of the strata, each stratum will inherit the cohomological behavior of tautness from its zipper.  Hence, the rich classical cohomological study of tautness applies to each stratum of any SRF defined on a compact manifold.

\subsection{Main result}
In this work we intend to understand the tautness character of all strata globally, by showing that the topology of $X$ has, indeed, a strong influence on the tautness of each stratum, individually. Our main result is the following:

\begin{theorem}\label{theorem:main}
Let $\K$ be an SRF on a closed manifold $X$. Then  there exists a unique class $\kappab_X\in H^1(X/\K)$ that contains the \'Alvarez class of each stratum. More precisely, the restriction of $\kappab_X$ to each stratum $S$ is the \'Alvarez class of $(S,\K_S)$.
\end{theorem}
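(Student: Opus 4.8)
The plan is to construct $\kappab_X$ by building a global basic $1$-form (or, more precisely, a basic $1$-cocycle in an appropriate sense) whose restriction to each stratum represents the Álvarez class of that stratum, and then to prove that its cohomology class is forced to be unique. The strategy is to work by induction on the depth of the stratification of $(X,\K)$, since this is the natural way to handle SRFs: the minimal (closed) strata sit at the bottom, and removing a tubular neighborhood of the most singular stratum yields an SRF of strictly smaller depth on a manifold with boundary (or, after doubling/collaring arguments, on a slightly smaller closed piece), to which the inductive hypothesis applies. So first I would set up the stratification $X = S_0 \supset S_1 \supset \cdots$, fix $\K$-invariant tubular neighborhoods $T_S$ of each singular stratum with the standard linearized/homogeneous structure, and record that on each $T_S \setminus S$ the foliation is a regular Riemannian foliation fibering (via the radial retraction) over a neighborhood of $S$ in a way compatible with $\K_S$.

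Second, I would assemble the local data. For the open regular stratum $S_0 = X_{\mathrm{reg}}$ (a CERF, by the results quoted from \cite{tope1}, \cite{tope2}), the Álvarez class $\kappab_{S_0}$ is well defined; choose a representative, namely the mean curvature $1$-form $\kappa_{g}$ of a suitable bundle-like metric $g$, which is basic and closed by the CERF property. For each singular stratum $S$, likewise pick a bundle-like metric on $S$ whose mean curvature form $\kappa_S$ represents $\kappab_S$. The key compatibility input is that on the overlap $T_S \cap X_{\mathrm{reg}}$ the mean curvature forms of $\K$ and of $\K_S$ differ by an exact basic form plus a term coming from the mean curvature of the fibers of the tube; this is exactly the kind of computation carried out for the tautness of CERFs, so I would invoke it rather than redo it. Using a partition of unity subordinate to $\{X_{\mathrm{reg}}\} \cup \{T_S\}$ that is \emph{basic} (constant on leaves — available because the $T_S$ are saturated and one can average), glue the local representatives into a single basic $1$-form $\kappa_X$ on $X$. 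One then checks $d\kappa_X = 0$ (the corrections introduced by the partition of unity are exact, and closedness is a local statement verified stratum by stratum), so $\kappab_X := [\kappa_X] \in H^1(X/\K)$ is defined.

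Third, I would verify the restriction property and uniqueness. Restricting $\kappa_X$ to a stratum $S$: on $S$ it agrees, up to exact basic forms on $S$, with $\kappa_S$, because the gluing corrections and the tube-fiber mean-curvature terms all vanish or become exact when pulled back to $S$ (the fibers degenerate to points over $S$); hence $\kappab_X|_S = \kappab_S$ in $H^1(S/\K_S)$. For uniqueness: any two classes in $H^1(X/\K)$ restricting to $\kappab_S$ on every stratum differ by a class restricting to $0$ on each stratum, so it suffices to show that the restriction map $H^1(X/\K) \to H^1(S_0/\K_{S_0})$ to the \emph{regular} stratum is injective. This follows from a Mayer–Vietoris / excision argument along the stratification: the basic cohomology of $X$ near a singular stratum $S$ is controlled by that of $S$ together with that of the link foliation (a standard long exact sequence for the pair $(T_S, T_S\setminus S)$ in basic cohomology, again descending by depth), and in degree $1$ this forces a class vanishing on $X_{\mathrm{reg}}$ to vanish on all of $X$.

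The main obstacle I anticipate is the gluing step — specifically, controlling the behavior of the mean curvature form across the singular strata and showing the discrepancies are exact in \emph{basic} cohomology, not merely in ordinary cohomology. This requires a precise understanding of how a bundle-like metric on $X$ restricts near $S$ to the product of a bundle-like metric on $S$ and a transverse (cone-like) metric on the link, and how the mean curvature decomposes accordingly; the non-compactness of the strata (handled abstractly via the CERF/zipper machinery) means one cannot naively integrate, so the closedness and the exactness of correction terms must be argued with the twisted-cohomology and duality tools from \cite{suso}, \cite{kamber-tondeur-dual} adapted to the stratified setting. Everything else — the induction on depth, the basic partition of unity, the Mayer–Vietoris sequence — is routine once that local normal-form analysis is in place.
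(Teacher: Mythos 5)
Your overall skeleton (induction on depth, gluing over a saturated cover by tubes via Mayer--Vietoris or a basic partition of unity, and uniqueness via injectivity of the restriction $H^1(X/\K)\to H^1(R/\K)$ to the regular stratum) does match the architecture of the paper's proof, and the uniqueness half is essentially Proposition~\ref{prop:mono-regular} in the right spirit. But there is a genuine gap at the one step that carries all the weight: the compatibility on the overlap. You assert that on $T_S\cap R$ the mean curvature form of $\K$ and the pullback of that of $\K_S$ differ by an exact basic term plus ``a term coming from the mean curvature of the fibers of the tube,'' and that this is ``exactly the kind of computation carried out for the tautness of CERFs, so I would invoke it rather than redo it.'' No such statement is available from the CERF machinery. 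The identity you need, $\kappab_R\vert_{D_S\cap R}=\pi^*\kappab_S$ in $H^1((D_S\cap R)/\K)$, is precisely Proposition~\ref{prop:depth1}, and establishing it consumes Sections 4, 5 and the Appendix of the paper: one must (i) prove that $D_S\cap R$ is itself a CERF (Proposition~\ref{prop:TSR}) so that Álvarez classes restrict correctly; (ii) reduce the structure group of the sphere bundle $D_S\to S$ to a \emph{compact} group preserving the fibre SRF (Lemma~\ref{lema:struct}, whose proof is the whole Appendix); (iii) show that the associated fibrewise foliation is taut by averaging a characteristic form over that compact group (Lemma~\ref{lema:particiones de la unidad}), which requires the fibre foliation --- the regular part of the SRF $\G$ on $\S^{n_S}$ --- to be taut to begin with; and (iv) obtain that fibre tautness from the depth-induction hypothesis \emph{combined with} the topological input $H^1(\S^{n_S}/\G)=0$ (Lemma~\ref{lema:esfera}). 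Point (iv) is where the topology of the link being a sphere enters, and it cannot be bypassed: Example~\ref{example:surgery} shows that taut and non-taut strata can coexist in an SRF, so ``the fibre contribution to the mean curvature is cohomologically negligible'' is not a formal consequence of local normal forms or of the CERF results. Your proposal never mentions the tautness of the link foliation, the compactness of the structure group, or the vanishing of $H^1$ of the sphere, so the central ingredient of the gluing is missing.

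A secondary remark: gluing chosen representative $1$-forms with a basic partition of unity produces a closed basic form only after you know the local classes agree on overlaps in basic cohomology, so your construction is the Mayer--Vietoris argument in disguise and all of its burden falls on the overlap identity above; likewise, in the uniqueness/injectivity step you still need the injectivity of $\pi^*\colon H^1(S/\K)\to H^1((D_S\cap R_{i-1})/\K)$, which the paper proves by a $\pi$-basic-function argument rather than by a long exact sequence of a pair. These are repairable details, but as written the proposal replaces the paper's key technical content (thick foliated bundles with compact structure group, tautness of the fibrewise foliation, and the $H^1(\S^{k})=0$ input fed through the depth induction) with an appeal to results that do not contain it.
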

We will say that $\kappab_X$ is the {\em\'Alvarez class} of $\K$, and that $\K$ is {\em cohomologically taut} if its \'Alvarez class vanishes. To prove Theorem~\ref{theorem:main} we shall need to exploit the local description of the neighbourhood of a stratum of an SRF, and use strongly the fact that its associated sphere bundle admits a compact structure group. The key technical point needed to patch up the \'Alvarez classes of all strata together is Proposition~\ref{prop:thickasabrick}, which establishes that the \'Alvarez class of a singular stratum $S$ is induced by the \'Alvarez class of a tube along $S$.  As a consequence, although taut and non-taut strata may coexist in an SRF (as it happens in Example~\ref{example:surgery}), all strata below a taut stratum must be taut. As an application, we retrieve a singular version of the classical result by E.~Ghys referred to above:

\begin{corollary}
Every SRF on a compact simply connected manifold $X$ is cohomologically taut.
\end{corollary}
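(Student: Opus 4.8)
The plan is to derive the Corollary directly from Theorem~\ref{theorem:main}. Since the \'Alvarez class $\kappab_X$ of $\K$ lives in $H^1(X/\K)$, and $\K$ is cohomologically taut precisely when $\kappab_X=0$, it suffices to prove that $H^1(X/\K)=0$ whenever $X$ is simply connected. I want to stress that restricting attention to the regular stratum $R$ is \emph{not} enough here: $R$ need not be simply connected even when $X$ is (for instance for the foliation of $S^2$ by the circles of latitude together with the two poles, where $R$ is a cylinder), so the argument must be carried out on $X$ itself, using that $\kappab_X$ is a single global basic class.

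The core step will be to compare the basic cohomology of $\K$ with the ordinary de Rham cohomology of $X$. A basic $1$-form of $\K$ is in particular a $1$-form on $X$, so the inclusion of differential complexes $\Omega^{\bullet}(X/\K)\hookrightarrow\Omega^{\bullet}(X)$ induces a linear map $j\colon H^1(X/\K)\to H^1(X;\R)$, and I would show that $j$ is injective. Given a closed $\om\in\Omega^1(X/\K)$ with $j[\om]=0$, write $\om=df$ with $f\in C^{\infty}(X)$; for every vector field $V$ tangent to $\K$ one has $Vf=i_V\,df=i_V\om$, which vanishes on the regular stratum because $\om$ is basic there, hence vanishes on all of $X$ by density of $R$ and continuity. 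Therefore $i_V\,df=0$ and $L_V\,df=d(i_V\,df)=0$, so $f$ is a basic function and $[\om]=0$ already in $H^1(X/\K)$.

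To conclude, a simply connected manifold has $H^1(X;\R)=0$ (by Hurewicz together with the universal coefficient theorem, or because $H^1_{\mathrm{dR}}(X)\cong\Hom(\pi_1(X),\R)$), so the injectivity of $j$ forces $H^1(X/\K)=0$; in particular $\kappab_X=0$, and $\K$ is cohomologically taut.

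The entire weight of the Corollary sits in Theorem~\ref{theorem:main}: once the \'Alvarez classes of all strata are known to patch into one global class in $H^1(X/\K)$, the reduction to the simply connected hypothesis is purely formal. The only place where the singular nature of $\K$ enters is the injectivity $H^1(X/\K)\hookrightarrow H^1(X;\R)$, and there the point to be careful about is propagating the identity $i_V\om=0$ from the open dense regular stratum to the whole manifold; I expect this to be the only non-cosmetic step, everything else reducing to the elementary fact that the primitive of an exact basic $1$-form is itself basic.
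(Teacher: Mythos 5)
Your proof is correct and follows essentially the same route as the paper: Theorem~\ref{theorem:main} plus the monomorphism $H^1(X/\K)\rightarrowtail H^1(X)$ (which the paper records as Remark~\ref{remark:inyective}, ``by degree reasons'') and $H^1(X)=0$ for simply connected $X$. Your explicit verification of the injectivity is exactly the content of that remark — note only that the density detour through $R$ is superfluous, since a basic form of an SRF satisfies $i_V\om=0$ on all of $X$ by definition.
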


\begin{org}
In Sections~\ref{sec:SRF}  and \ref{sec:CERF} we recall some known facts about SRFs and CERFs, respectively, and prove that certain bundles of singular strata are CERFs. In Section 4 we introduce the notion of thick foliated bundle and study the interplay between the \'Alvarez classes of their components. We apply this study in Section 5, as thick foliated bundles appear in the local structure of an SRF, which we shall use to prove the main results in Section 6. The Appendix is devoted to the reduction of the structure group of the sphere bundle of a stratum.
\end{org}

The authors wish to thank the referees for their useful remarks and suggestions.

\section{Singular Riemannian Foliations}\label{sec:SRF}

In this section we present the class of foliations we are going to study in this paper, which were introduced by P.~Molino in \cite[Chapter 6]{molino}.
They are essentially Riemannian foliations whose leaves may have different dimensions.

\subsection{SRFs}
A {\em Singular Riemannian Foliation} (SRF, for short) on a connected manifold $X$ is a partition $\K$ by connected immersed submanifolds, called {\em leaves}, satisfying the following properties:

\begin{enumerate}
   \item the module of smooth vector fields tangent to the leaves is transitive on each leaf;
   \item there exists a Riemannian metric $\mu$ on $X$, called {\em adapted metric}, such that each geodesic that is perpendicular at one point to a leaf remains perpendicular to every leaf it meets.
\end{enumerate}

The first condition implies that $(X,\mathcal{K})$ is a singular foliation in the sense of \cite{stefan} and \cite{sussmann}. Notice that the
restriction of $\mathcal{K}$ to a saturated open subset $V \subset X$ 	induces an SRF in $V$, which we shall denote by $\K_{V}$.
Any regular Riemannian foliation (RF for short) is an SRF, but the first interesting examples of SRFs are the following:
	
\begin{itemize}
	\item the partition defined by the orbits of an action by isometries of a connected Lie group;
	\item the partition defined by the closures of the leaves of a regular Riemannian foliation;
	\item the partition defined by the closures of the leaves of a singular Riemannian foliation (this is Molino's conjecture, recently proved in  \cite{alexandrino}).
\end{itemize}

\subsection{Stratification}
Let $L_i$ denote the union of all the leaves of $\K$ of dimension $i$. We denote by $\SK$ the stratification of $X$ whose elements, called {\em strata}, are the connected components of the subsets $L_i$, for every $i\ge0$. The restriction of $\mathcal{K}$ to a stratum $S$ is an RF $\mathcal{K}_{S}$. The strata are partially ordered by: $S_{1} \preccurlyeq S_{2}\Leftrightarrow S_{1}\subset \overline{S_{2}}$. Denote by $\prec$ the corresponding strict partial order. For every stratum $S$, we have $\overline{S}=\bigcup_{S_i\preccurlyeq S}S_i$. Thus, the minimal strata are the only closed strata. The maximal stratum, called the {\em regular stratum}, is an open dense subset of $X$, and shall be denoted by $R$. The other strata will be called {\em  singular strata}. Recall that two strata $S$ and $S'$ are {\em comparable} if either $S'\preccurlyeq S$ or $S\preccurlyeq S'$ holds.
	
The  {\em depth}  of a stratum $S \in \SK$, written $ \depth_{\K} S$, is defined to be
the largest $i$ for which there exists a chain of strata $S_i  \prec S_{i-1} \prec \cdots \prec S_0= S$.
So, $ \depth_{\K} S = 0$ if and only if $S$ is a closed stratum.
The depth of $\SK$ is defined as the depth of its regular stratum, and will be denoted by $\depth  \SK$. Notice that $\depth  \SK=0$ if and only if $\K$ is regular.
	
\medskip
	
We now recall some geometrical tools which  we shall use for the study of the SRF $(X,\mathcal{K})$.
	
\subsection{Foliated tubular neighbourhoods}\label{subsec:tubular}

Since a singular stratum $S \in \SK$ is a proper submanifold of $X$, we can consider a tubular neighbourhood  ${\mathcal{T}}_{S}= ({T}_{S},{\tau}_{S},S)$
with fibre the open disk  $\D^{{n}_{S} +1}$.
The following smooth maps are associated with this neighbourhood:
\begin{itemize}
\item The {\em radius map} ${\rho}_{S}\colon {T}_{S}\to [0,1)$
      defined fibrewise by $z\mapsto |z|$. Each $t\not= 0$ is a regular value of
      ${\rho}_{S}$, and we have ${\rho}_{S}^{-1}(0)=S$.
\item The {\em contraction} ${H}_{S} \colon {T}_{S}\times [0,1] \to {T}_{S}$
      defined fibrewise by $(z,r ) \mapsto  r \cdot z$. The restriction
      $({H}_{S})_{_{t}} \colon {T}_{S}\to {T}_{S}$ is an embedding for each
      $t\not= 0$   and $(H_{S})_{_{0}} \equiv {\tau}_{S}$.
\end{itemize}

\noindent These maps satisfy ${\rho}_{S}(r \cdot u) = r {\rho}_{S}(u)$.
This tubular neighbourhood can be chosen to  satisfy the two following important
properties (see \cite[Lemma 6.1]{molino} and \cite[Lemme 1]{boualem-molino}):

\begin{itemize}
  \item Each  $({\rho}_{S}^{-1}(t),\mathcal{K})$ is an SRF, and
  \item Each  $({H}_{S})_{_{t}}\colon ({T}_{S},\mathcal{K}) \to
	       ({T}_{S},\mathcal{K}) $ is a foliated map.
\end{itemize}

\medskip
		
We shall say that ${\mathcal{T}}_{S}$   is a {\em foliated tubular neighbourhood} of $S$.
The {\em core} of $\mathcal{T}_S$ is the hypersurface ${D}_{S} = {\rho}_{S}^{-1}(1/2)$.
The following map
\begin{equation}\label{eq:explosion}
{\mathscr{L}}_{S}\colon ({D}_{S}\times (0,1),\K \times \I) \to (({T}_{S}\backslash S),\K),
\end{equation}
defined by ${\mathscr{L}}_{S}(z,t) = {H}_{S}(z,2t)$, is a foliated diffeomorphism, where $\I$ stands for the foliation of $(0,1)$ by points.

\subsection{Thom--Mather System}

In Section~\ref{sec:local structure} we shall need the foliated tubular neighbourhoods of the strata satisfying certain compatibility conditions. We introduce the following notion, inspired by the abstract stratified objects of \cite{mather,thom}.

\medskip

A family of foliated tubular neighbourhoods
$\mathcal{T} =\{ {\mathcal{T}}_{S }	\ | \ S $ singular stratum$\}$
is a {\em foliated Thom--Mather system} of $(X,\K)$
if the following conditions are satisfied:

\begin{itemize}
    \item[]
    \begin{itemize}
    \item[(TM1)] For each pair of singular strata $S, S'$ we have
$$
{T}_{S} \cap T_{S'} \ne \emptyset  \Longleftrightarrow  
S\text{ and }S'\text{ are comparable.}
$$
\end{itemize}
\end{itemize}
\noindent Let us suppose that $S'\prec S$. The other conditions are:

\begin{itemize}
    \item[]
    \begin{itemize}
\item[(TM2)]
${T}_{S} \cap T_{S'} = \tau_{S}^{-1}({T}_{S'} \cap S)$.

\item[(TM3)]
$ \rho_{S'} = \rho_{S'} \rondp \tau_{S}\hbox{ on }  {T}_{S} \cap T_{S'}.$

\item[(TM4)]
${\rho}_{S} \rondp ({H}_{S'})_t= {\rho}_{S}$, and ${\rho}_{S'} \rondp ({H}_{S})_t= {\rho}_{S'}$ on ${T}_{S} \cap T_{S'}$, for all $t\in(0,1)$.
\end{itemize}
\end{itemize}

This notion was already defined in \cite[Appendix]{tope1}, but without the condition (TM4). In that paper we constructed a collection of tubular neighbourhoods $\mathcal{T}$ satisfying (TM1), (TM2) and (TM3). Let's show that, in turn, it also satisfies the condition (TM4).

\begin{proposition}\label{proposition:tomate}
Let $\K$ be an SRF  defined on a compact manifold $X$.
Then  there exists a foliated Thom--Mather system of $(X,\K)$.
\end{proposition}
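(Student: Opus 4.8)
The plan is to re-use the family $\mathcal T=\{\mathcal T_S\}$ of foliated tubular neighbourhoods constructed in \cite[Appendix]{tope1}. That construction proceeds by induction on $\depth_\K S$, starting with the closed (depth-$0$) strata, and at each stage produces $\mathcal T_S$ compatible with the already-chosen $\mathcal T_{S''}$ for all $S''\prec S$; it delivers (TM1), (TM2) and (TM3). Since $X$ is compact there are finitely many strata, so this induction is finite, and it suffices to control one comparable pair $S'\prec S$ at a time. Thus the task reduces to checking the two identities in (TM4) for the family $\mathcal T$, after, if necessary, a mild adjustment of the radius maps that does not disturb (TM1)--(TM3).

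First I would settle the identity $\rho_{S'}\circ (H_S)_t=\rho_{S'}$ on $T_S\cap T_{S'}$, which is purely formal. By (TM2) we have $T_S\cap T_{S'}=\tau_S^{-1}(T_{S'}\cap S)$, and by its definition the contraction $(H_S)_t\colon z\mapsto t\cdot z$ moves each point inside its own $\tau_S$-fibre; hence $\tau_S\circ (H_S)_t=\tau_S$, and in particular $(H_S)_t$ maps $\tau_S^{-1}(T_{S'}\cap S)$ into itself, so the composition makes sense. Now (TM3) says $\rho_{S'}=\rho_{S'}\circ\tau_S$, i.e. $\rho_{S'}$ is constant along $\tau_S$-fibres; therefore $\rho_{S'}\circ (H_S)_t=\rho_{S'}\circ\tau_S\circ (H_S)_t=\rho_{S'}\circ\tau_S=\rho_{S'}$.

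The remaining identity, $\rho_S\circ (H_{S'})_t=\rho_S$ on $T_S\cap T_{S'}$, is the substantial point and forces one to look inside the inductive step. Here $\mathcal T_{S'}$ is fixed before $\mathcal T_S$ (because $\depth_\K S'<\depth_\K S$), and $\rho_S$ is the fibre norm of the normal disc bundle $\tau_S$; the point is to choose this fibre norm invariant under the (partially defined) contractions $(H_{S''})_t$, $S''\prec S$, on the overlaps. This is feasible because (a) $(H_{S''})_t$, being a foliated map isotopic to the identity, preserves the stratum $S$ and, together with the scaling $\rho_{S''}\mapsto t\rho_{S''}$, maps $T_S\cap T_{S''}$ to itself; (b) one may simultaneously arrange $\tau_S$ to intertwine the $(H_{S''})_t$ with themselves on the overlaps, compatibly with (TM2)--(TM3); and (c) the normal geometry along each $S''$ is carried by a disc bundle with compact structure group, so the required invariance of the fibre norm can be produced by averaging, exactly as one averages a metric over a compact group. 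With $\rho_S$ (and $\tau_S$) so chosen, $\rho_S\circ (H_{S''})_t=\rho_S$ holds by construction for every $S''\prec S$, and (TM1)--(TM3) are untouched since they constrain $\tau_S,\rho_S$ only through relations already respected. Feeding this into the depth induction yields the foliated Thom--Mather system.

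I expect the main obstacle to be carrying out (b)--(c) \emph{simultaneously} for all $S''\prec S$: distinct such strata need not be comparable, so one must verify that the various equivariance/invariance demands on $\tau_S$ and $\rho_S$ can be imposed at once and patched over the possibly overlapping tubes $T_{S''}$. This is exactly where the local product description of a neighbourhood of $S$ --- the explosion diffeomorphism $\mathscr L_S\colon (D_S\times(0,1),\K\times\I)\to ((T_S\setminus S),\K)$ and the fact that each $(H_S)_t$ is a foliated map --- is used, to reduce the problem to the core $D_S$ and then extend radially in a way that is automatically invariant under the lower contractions.
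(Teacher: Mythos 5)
Your treatment of the easy half of (TM4) is fine and coincides with the paper's: since $\tau_S\circ (H_S)_t=\tau_S$ and (TM3) says $\rho_{S'}=\rho_{S'}\circ\tau_S$ on $T_S\cap T_{S'}$, the identity $\rho_{S'}\circ (H_S)_t=\rho_{S'}$ is immediate. The gap is in the other half, $\rho_S\circ (H_{S'})_t=\rho_S$, which you yourself identify as ``the substantial point'' but do not prove: you replace it by a programme (choose $\tau_S$ and the fibre norm $\rho_S$ equivariantly with respect to all lower contractions $(H_{S''})_t$, producing the invariance by averaging over a compact structure group), and you explicitly concede that the simultaneous compatibility of these demands for all $S''\prec S$, over possibly overlapping and non-comparable tubes $T_{S''}$, is an unresolved obstacle. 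That obstacle is precisely the content of the claim, so as written the proposal establishes (TM1)--(TM3) plus half of (TM4) and leaves the rest as a plausible-sounding but unverified construction. Moreover, step (c) of your programme is on shaky ground: for the full disk bundle $\mathcal{T}_S$ it is \emph{not} known that the foliated structure group reduces to a compact group (the paper states this openly in \S\ref{subsection.struct.group}; only the sphere bundle $\mathcal{D}_S$ is shown, in the Appendix, to admit a compact structure group), so ``average the fibre norm over the compact group'' is not available in the form you invoke it.

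The paper's route shows that no modification of the tubes from \cite[Appendix]{tope1} is needed at all. The construction in \cite[3.2]{tope1} already provides the foliated \emph{isometry} \eqref{eq:exploisom},
$\mathscr{L}_{S'}\colon (D_{S'}\cap T_S\times(0,1),\;\mu|_{D_{S'}}+dt^2)\to (T_{S'}\cap T_S,\;\mu)$,
and $S''=D_{S'}\cap S$ is a stratum of $(D_{S'},\K_{D_{S'}})$ with tube $T_{S''}=D_{S'}\cap T_S$. The isometric product structure forces $\rho_S(\mathscr{L}_{S'}(x,r))=\rho_{S''}(x)$, i.e.\ $\rho_S$ is independent of the radial coordinate $r$ of $T_{S'}$; since $(H_{S'})_t$ acts in these coordinates simply by $\mathscr{L}_{S'}(x,r)\mapsto\mathscr{L}_{S'}(x,rt)$, the identity $\rho_S\circ(H_{S'})_t=\rho_S$ follows by a two-line computation, one pair $S'\prec S$ at a time, with no averaging, no re-choice of $\tau_S$, and no global patching problem. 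If you want to salvage your approach, the missing ingredient is exactly this product/isometry description of the overlap $T_S\cap T_{S'}$; without it (or an equivalent compatibility statement from the inductive construction), the invariance of $\rho_S$ under the lower contractions does not follow.
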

\begin{proof}
Consider the collection $\mathcal{T}$ of foliated tubular neighbourhoods constructed in \cite[Appendix]{tope1}, where it is proven to satisfy (TM1), (TM2) and (TM3). Let's see (TM4).

Consider two singular strata $S'\prec S$. For every  $(y,t)\in T_S\cap T_{S'}\times (0,1)$, we have
$$
\rho_{S'}\circ (H_S)_t(y) \stackrel{(TM3)}{=} 
\rho_{S'}\circ\tau_S (H_S)_t(y) =
\rho_{S'}(y),
$$
and so, the latter part of (TM4) follows.

To prove the first part of (TM4) we recall the description of $T_S\cap T_{S'}$ shown in \cite[3.2]{tope1}.
Consider the SRF $(D_{S'},\K_{D_{S'}})$ and notice that $S''=D_{S'}\cap S$ is a stratum of that SRF. The following restriction of $\mathscr{L}_{S'}$ (\ref{eq:explosion}) is, in fact, an isometry:
\begin{equation}\label{eq:exploisom}
{\mathscr{L}}_{S'}\colon ({D}_{S'}\cap T_S\times (0,1),\K \times \I,\mu|_{D_{S'}}+dt^2) \longrightarrow ((T_{S'}\cap{T}_{S}),\K,\mu).
\end{equation}
Using $\mu_{D_{S'}}$ we can take a foliated tubular neighbourhood $(T_{S''},\tau_{S''},S'')$ of $S''$ in $D_{S'}\cap T_S$ (see Figure~\ref{fig:dibujico}). In fact, $T_{S''}={D}_{S'}\cap T_S$.

\begin{figure}[b]
\includegraphics{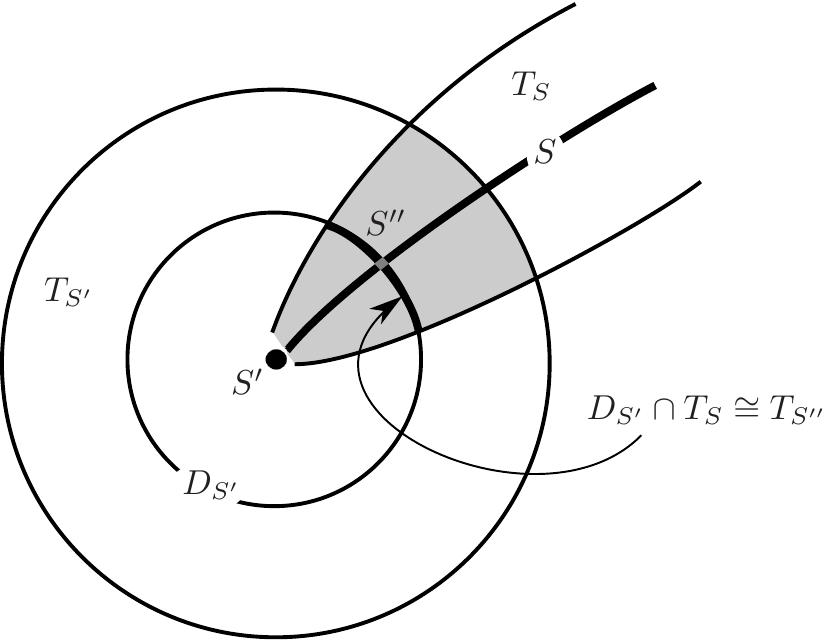}
\caption{The shaded area is $T_{S'}\cap T_{S}\cong T_{S''}\times (0,1)$}
\label{fig:dibujico}
\end{figure}

Now, to prove the first part of (TM4), take $y\in T_{S'}\cap T_S$ and $(x,r)\in T_{S''}\times (0,1)$ such that $\mathscr{L}_{S'}(x,r)=y$. On one hand, notice that \eqref{eq:exploisom} implies that $\rho_{S}(\mathscr{L}_{S'}(x,r))=\rho_{S''}(x)$ for any $r$, and thus, we have $\rho_S(y)=\rho_{S''}(x)$. On the other hand,
\begin{equation*}
\begin{split}
\rho_S\circ (H_{S'})_t (y) 
&= \rho_S\circ (H_{S'})_t (\mathscr{L}_{S'}(x,r)) 
=\rho_S\circ (H_{S'})_t (H_{S'}(x,2r))\\
&= \rho_S\left(H_{S'}(x,2rt)\right)
= \rho_S\left(\mathscr{L}_{S'}(x,rt)\right)
=\rho_{S''}(x),
\end{split}
\end{equation*}
and (TM4) follows.

\end{proof}

Notice that (TM4) does not hold for $S' =S$ since
${\rho}_{S}({H}_{S}(z,r)) = r \cdot {\rho}_{S}(z)$. We fix for the rest of this paper a such
foliated Thom--Mather system  $\mathcal{T}$.

\begin{remark}\label{rem:notrelated}
Given two singular strata $S$ and $S'$, using (TM1) and (TM4), we have:
$$
{T}_{S} \cap {T}_{S'} \ne \emptyset
\Longleftrightarrow
{D}_{S} \cap {T}_{S'} \ne \emptyset
\Longleftrightarrow
{D}_{S} \cap {D}_{S'} \ne \emptyset
\Longleftrightarrow
S\text{ and }S'\text{ are comparable.}
$$
\end{remark}

\subsection{Structure group}\label{subsection.struct.group}
We can take an atlas of the bundle ${\mathcal{T}}_{S}$ whose cocycle takes values in the structure group
$\OO({n}_{S}+1)$. By \cite{molino}, we have that the fibres of this bundle are modelled on an SRF ${\mathcal{E}}_{S}$ on the open disk $\D^{{n}_{S} +1}$.
Moreover, this foliation is invariant by homotheties and the origin is the only $0$-dimensional leaf. This implies that the sphere bundle ${\mathcal{D}}_{S} = ({\tau}_{S} ,{D}_{S} ,S)$ which is, indeed, a restriction of ${\mathcal{T}}_{S}$,  satisfies $\depth \SKDS = \depth\SKTS -1$.

P.~Molino and H.~ Boualem (\cite{boualem-molino})
prove that there exists a foliated atlas
$$
\mathcal{B} =
\Set{\phii_i \colon ({\tau}_{S}^{-1}(U_i) , \mathcal{K})
\to (U_i \times \D^{{n}_{S} +1}, {\mathcal{K}}_{S} \times
{\mathcal{E}}_{S})}
_{i\in I}
$$
of ${\mathcal{T}}_{S}$ whose cocycle takes values in the following structure group
$$
\Diff (\D^{{n}_{S} +1},	 {\mathcal{E}}_{S}) =
\Set{ f \in  \Diff (\D^{{n}_{S} +1}) | f \hbox{
preserves  ${\mathcal{E}}_{S}$ and $|f(v)| = |v|$ if $v \in \D^{{n}_{S} +1}$}}.
$$
We do not know whether this structure group can be reduced to the compact Lie group
$
\OO({{n}_{S} +1}) \cap\Diff (\D^{{n}_{S} +1}, {\mathcal{E}}_{S}),
$
but the following lemma shows that the sphere bundle $\mathcal{D}_S$ has a richer structure group than that of ${\mathcal{T}}_{S}$.
\begin{lemma}\label{lema:struct}
The  sphere bundle ${\mathcal{D}}_{S}$ admits an atlas with values in the compact Lie group
$$
\OO({{n}_{S} +1}, {\mathcal{G}}_{S}) =	
\Set{ A \in  \OO ({{n}_{S} +1}) |  A
\hbox{  preserves } {\mathcal{G}}_{S}},
$$
where ${\mathcal{G}}_{S}$ is an SRF on the fibre ${\S}^{  {n}_{S}  }$ with no $0$-dimensional leaves.
\end{lemma}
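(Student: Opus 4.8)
The plan is to reduce the structure group of $\mathcal{D}_S$ by a fibrewise linearization at the origin, using the homotheties that preserve the infinitesimal foliation $\mathcal{E}_S$.

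I would begin by fixing $\mathcal{G}_S$. Since $\mathcal{E}_S$ is invariant by homotheties and has the origin as its only $0$-dimensional leaf, the map $v\mapsto(v/|v|,|v|)$ is a foliated diffeomorphism $(\D^{n_S+1}\setminus\{0\},\mathcal{E}_S)\cong(\S^{n_S}\times(0,1),\mathcal{G}_S\times\I)$, where $\mathcal{G}_S$ is the restriction of $\mathcal{E}_S$ to the unit sphere (equivalently, up to a homothety, to the radius-$1/2$ sphere that is the fibre of $\mathcal{D}_S$). Homothety-invariance makes this restriction independent of the radius, and as $0\notin\S^{n_S}$, the SRF $\mathcal{G}_S$ has no $0$-dimensional leaves; in particular $\OO(n_S+1,\mathcal{G}_S)$ is a closed subgroup of $\OO(n_S+1)$, hence a compact Lie group. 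Next I would observe that every $f\in\Diff(\D^{n_S+1},\mathcal{E}_S)$ satisfies $|f(v)|=|v|$, so it preserves each sphere centred at $0$ and restricts there to a diffeomorphism preserving $\mathcal{G}_S$; hence restricting the charts $\phi_i$ of the Boualem--Molino atlas $\mathcal{B}$ to $D_S=\rho_S^{-1}(1/2)$ yields a foliated atlas of $\mathcal{D}_S$ whose cocycle $\bar g_{ij}:=g_{ij}|_{D_S}$ takes values in $\Diff(\S^{n_S},\mathcal{G}_S)$. It remains to shrink this group to $\OO(n_S+1,\mathcal{G}_S)$.

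For this, write $H_r(v)=r\cdot v$ (the fibrewise contraction) and, for $r\in(0,1]$, set $g^{(r)}_{ij}=H_{1/r}\circ g_{ij}\circ H_r$. Each $g^{(r)}_{ij}$ is norm-preserving and preserves $\mathcal{E}_S$ — this is exactly where homothety-invariance of $\mathcal{E}_S$ is used — and $g^{(r)}_{ij}g^{(r)}_{jk}=g^{(r)}_{ik}$, so for each fixed $r$ one again has a cocycle in $\Diff(\D^{n_S+1},\mathcal{E}_S)$. Because $g_{ij}(0)=0$ (it is norm-preserving), a Taylor expansion shows $g^{(r)}_{ij}\to A_{ij}:=D(g_{ij})_0$ in $C^\infty$ on compact subsets of $\D^{n_S+1}$ as $r\to0^+$; the limit $A_{ij}$ is linear and, again by norm-preservation, lies in $\OO(n_S+1)$, while the chain rule makes $\{A_{ij}\}$ an $\OO(n_S+1)$-cocycle. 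Restricting everything to the sphere $\rho_S^{-1}(1/2)$, the maps $\bar g^{(r)}_{ij}:=g^{(r)}_{ij}|_{D_S}$ lie in $\Diff(\S^{n_S},\mathcal{G}_S)$ and converge in $C^\infty$ to $A_{ij}|_{\S^{n_S}}$. The key point is then that preserving an SRF is a closed condition among diffeomorphisms of a compact manifold, whence $A_{ij}|_{\S^{n_S}}$ preserves $\mathcal{G}_S$, so $\{A_{ij}|_{\S^{n_S}}\}$ is a cocycle valued in $\OO(n_S+1,\mathcal{G}_S)$.

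Finally, $(r,s)\mapsto\bar g^{(r)}_{ij}(s)$, with the value at $r=0$ taken to be $A_{ij}|_{\S^{n_S}}$, is a homotopy of $\Diff(\S^{n_S},\mathcal{G}_S)$-valued cocycles over the manifold $S$; since homotopic cocycles define isomorphic bundles over a paracompact base, $\mathcal{D}_S$ is isomorphic, as a foliated bundle, to the bundle with cocycle $\{A_{ij}|_{\S^{n_S}}\}$, which has structure group $\OO(n_S+1,\mathcal{G}_S)$. This would give the lemma. I expect the main obstacle to be exactly the behaviour of the homotopy at the endpoint $r=0$: one must check carefully that each $g^{(r)}_{ij}$ genuinely remains in $\Diff(\D^{n_S+1},\mathcal{E}_S)$ and, more delicately, that the limiting orthogonal cocycle still preserves $\mathcal{G}_S$. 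It is precisely the disappearance of the singular origin in passing from $\mathcal{T}_S$ to $\mathcal{D}_S$, together with the compactness of the fibre $\S^{n_S}$, that makes this last closedness step go through here, whereas the same point is what leaves the analogous reduction for the disk bundle $\mathcal{T}_S$ open.
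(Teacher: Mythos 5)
Your core idea -- linearize the Boualem--Molino cocycle fibrewise at the origin and use norm--preservation to see that the linearized cocycle is orthogonal -- is exactly the mechanism of the paper's proof (your $A_{ij}(u)$ is the paper's $G_u$). But the two steps you treat as routine are where the real work lies, and as written they are gaps. The first is your ``key point'' that preserving an SRF is a closed condition among diffeomorphisms under $C^\infty$ limits. This is not an off-the-shelf fact: for a singular foliation the tangent distribution is \emph{not} a closed subset of the tangent bundle (concentric circles in $\R^2$ already show this), so the easy $C^1$-limit argument from the regular case does not apply directly; one must argue via the stratification by leaf dimension (the limit diffeomorphism preserves the closed sets of points on leaves of dimension $\le i$, hence each stratum, and only then can one use the regular-case argument stratum by stratum, together with the local model to rule out bad accumulation). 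This limiting/boundary behaviour is precisely what the paper isolates and proves as Lemma~\ref{lem:appendix:embed}, by induction on $\depth \SF$ via a minimal-stratum argument; asserting it without proof leaves the heart of the lemma unproved (the same issue is hidden in your remark that $\OO(n_S+1,\mathcal{G}_S)$ is closed in $\OO(n_S+1)$, which the paper also needs).

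The second gap is more serious for the way Lemma~\ref{lema:struct} is used later. Your final step only yields an \emph{abstract} isomorphism of fibre bundles: homotopic $\Diff(\S^{n_S},\mathcal{G}_S)$-valued cocycles give isomorphic bundles, so $D_S$ carries some atlas with cocycle $\{A_{ij}\}$. This does not show that the new charts are foliated for $\K$, i.e.\ that they carry $(\pi^{-1}(U_i)\cap D_S,\K)$ to $(U_i\times\S^{n_S},\K_S\times\mathcal{G}_S)$: the isomorphism supplied by the covering-homotopy theorem has no reason to respect $\K$, and it is not even clear a priori that the gluing maps $(u,\theta)\mapsto(u,A_{ij}(u)\theta)$ are foliated for $\K_S\times\mathcal{G}_S$ (each $A_{ij}(u)$ preserving $\mathcal{G}_S$ says nothing about the $u$-dependence). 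Yet the foliated compatibility is exactly what the lemma is for: in the Example of Section~4 and in Proposition~\ref{prop:depth1} one needs $D_S\cap R\to S$ to be a \emph{thick foliated} bundle with compact structure group, i.e.\ one atlas that is simultaneously foliated and orthogonal. The paper avoids both problems by realizing the linearized charts directly as charts of $D_S$ itself, namely the $t=0$ boundary values $\overline{\overline{\phii}}$ of the blown-up charts of Lemma~\ref{lem:appendix:diag}; their foliatedness is exactly Lemma~\ref{lem:appendix:embed}, and the transition maps are then automatically foliated and compute to $(u,\theta)\mapsto(u,G_u\theta)$ with $G_u\in\OO(n_S+1)$. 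To repair your argument you would have to either prove your closedness claim and additionally check foliatedness of the limiting atlas on $D_S$ -- which essentially reconstructs the paper's blow-up argument -- or work directly with the boundary charts as the paper does.
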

\begin{proof}
See Appendix.
\end{proof}

\section{Tautness of Riemannian Foliations}\label{sec:CERF}

In this section we recall that, although a stratum $S$ of an SRF $\K$ is, in general, not compact, the tautness of $\K_S$ can be characterized cohomologically. More precisely, the classical study of tautness applies to a class of RF on possibly non-compact manifolds called CERFs, and $\K_S$ is a CERF.

\subsection{Differential forms}

Let $\F$ be an oriented foliation of dimension $p$ on the Riemannian manifold $(M,\mu)$. The characteristic form $\chi_{\mu}\in\Omega^p(M)$ is defined by
\begin{equation}\label{eq:def chi}
\chi_{\mu}(X_1,\dots,X_p)=\operatorname{det}(\mu(X_i,E_j))\;,\quad\forall X_1,\dots,X_p\in C^{\infty}(TM)\;,
\end{equation}
where $\{E_1,\dots,E_p\}$ is a local oriented orthonormal frame of $T\F$.
The mean curvature form $\kappa_{\mu}\in\Omega^1(M)$ is determined by $\kappa_{\mu}(X)=0$ for all $X\in C^{\infty}(T\F)$ and Rummler's formula~\cite{rummler}:
\begin{equation}\label{eq:rummler}
\kappa_{\mu}(Y)=-d\chi_{\mu}(Y,E_1,\dots,E_p)\;,\quad \forall Y\in C^{\infty}((T\F)^{\perp \mu})\;.
\end{equation}
 Notice that both $\chi_{\mu}$ and $\kappa_{\mu}$ are determined by the orthogonal subbundle $(T\F)^{\perp \mu}$ and the volume form along the leaves. Notice also that $\kappa_{\mu}$ is defined even if no orientation assumptions are made, and it is determined by \eqref{eq:rummler} in an open set where orientation conditions are satisfied.

We say that $\mu$ is {\em taut} (it is also called minimal or harmonic in the literature) if every leaf of $\F$ is a minimal submanifold of $M$, which is tantamount to saying that its mean curvature form is zero. We shall say that $\F$ is {\em taut} if $M$ admits a taut metric with respect to $\F$. If $T\F$ is oriented, tautness is characterized by {\em Rummler-Sullivan's criterion} \cite[Remark in p.~219]{sullivan}, which says that $\F$ is taut if and only if there exists a form $\omega\in\Omega^p(M)$ whose restriction to $T\F$ is positive and such that it is $d\F$-closed; namely, $d\omega(X_1,\dots,X_{p},Y)=0$ for any vector fields $X_1,\dots,X_{p}\in C^{\infty}(T\F)$ and $Y\in C^{\infty}(TM)$.

We say that $\mu$ is {\em tense} if its mean curvature form is basic. We shall say that a tense metric $\mu$ is {\em strongly tense} if $\kappa_{\mu}$ is also closed (see \cite[Def.~2.4]{nozawa-royo}; in \cite{tope1} and \cite{tope2} it is called a {\em D-metric}).

If  $\F$ is an RF on a compact manifold $M$, then there exists a tense metric (\cite[Tenseness Theorem in p.~1239]{dominguez}). As $M$ is compact, any tense metric must also be strongly tense by \cite[Eq.4.4]{kamber-tondeur-basic}.

Strong tenseness is not guaranteed if $M$ is not compact. Example 2.4 of \cite{cairns-escobales} shows an RF of dimension 2 on a noncompact manifold with a tense metric that is not strongly tense. Nevertheless, in \cite[Theorem 1.1]{nozawa-royo} it is shown that any transversely complete RF of dimension 1 on a possibly non-compact manifold $M$ admits a strongly tense metric. In  \cite[Corollary~1.9]{nozawa-arxiv}  the result is extended to any uniform complete RF.

\subsection{The CERFs}\label{subsec:CERF}

Let $M$ be a manifold endowed with an RF $\F$. A {\em zipper} of $\F$ is a compact manifold $N$ endowed with an RF $\HH$ satisfying the following property:
\begin{itemize}
    \item[(a)] The manifold $M$ is a saturated open subset of $N$ and $\HH_{M} = \F$.
\end{itemize}

	\smallskip

\noindent A {\em reppiz} of $\F$  is a saturated open subset  $U$ of $M$ satisfying the following properties:

\begin{itemize}
    \item[(b)] the closure $\overline{U}$ in $M$ is compact;
	\item[(c)] the inclusion $U \hookrightarrow M$ induces the isomorphism $H^{*}(U/\F) \cong H^{*}(M/\F)$,
\end{itemize}

	    \smallskip
\noindent where $H^*(M/\F)$ stands for the {\em basic cohomology}, that is, the cohomology of the complex of basic forms $\Omega^*(M/\F)=\set{\omega\in\Omega^*(M) |  i_X\omega=i_Xd\omega=0\ \forall X\in C^{\infty}(T\F)}$. We shall also use the notation $U\stackrel{q.i.}{\hookrightarrow}M$ to denote a quasi-isomorphism in basic cohomology.

\medskip
	
\noindent   We say that $\F$ is a {\em {\rm C}ompactly {\rm E}mbeddable {\rm R}iemannian {\rm F}oliation } (or	CERF ) if $(M,\F)$ admits a zipper and a reppiz. We have shown in \cite[Proposition 2.4]{tope2} that, for any  stratum $S$ of an SRF $\K$ defined on a compact manifold, the foliation  $\K_{S}$ is a CERF.

\subsection{Tautness of CERFs}\label{subsec:tautCERF}

In \cite[section 3]{tope2} we prove that for a CERF $\F$ on a possibly non-compact manifold $M$ the classical study of cohomology and tautness holds. We summarize the main results here:
\begin{enumerate}[(i)]

\item The CERF $\F$ admits a strongly tense metric $\mu$.
\item The class $[\kappa_{\mu}]\in H^1(M/\F)$ does not depend on the choice of the strongly tense metric $\mu$. We shall call it the {\em \'Alvarez class} of $\F$, and denote it by $\kappab_{\F}$\label{item:kappaCERF}.
\item If $U$ is a saturated open subset of $M$, then the \'Alvarez class of $(U,\F)$ is the restriction of the \'Alvarez class of $(M,\F)$.
\item  The tautness of $\F$ is equivalent to any of the following statements:
\begin{enumerate}
    \item  $\kappab_{\F}= 0$,

    \item  ${H}^{n}(M/\F) \ne 0 $ (when $\F$ is transversally oriented),

    \item $  {H}^{0}_{{\kappa}}(M/\F) \ne 0$ (when  $M$ is oriented and $\F$ is transversally oriented),
\end{enumerate}
\end{enumerate}
where $n=\codim\F$ and the $\kappa$-twisted cohomology ${H}^{*}_{{\kappa}}(M/\F)$ is the cohomology of the complex of basic forms with the twisted differential $d_{\kappa}\omega=d\omega - \kappa\wedge\omega$, being $\kappa\in\Omega^1(M/\F)$ any representative of the \'Alvarez class of $\F$.

\section{Thick foliated bundles}

In this section we slightly generalize the notion of a foliated bundle. 
Principal foliated bundles were introduced by Molino (see \cite[section 2.6]{molino}) to study objects such as the lifted foliation to the transverse frames bundle (see \cite[Proposition 2.4]{molino}). In such bundles,  if a vector  is tangent to a leaf, then it cannot be tangent to the fibre of the bundle
The fibres of a thick foliated bundle may carry a richer foliated structure.
All foliations considered in this section are regular, unless otherwise stated.

\begin{definition}
Let $\fB$, $\fF$ and $\fE$ be foliations on the manifolds $B$, $F$ and $E$, respectively. A fibre bundle $\pi\colon (E,\fE)\longrightarrow (B,\fB)$ with fibre $(F,\fF)$ and structure group $G$ is a {\em thick foliated bundle} if $G\le \Diff{(F,\fF)}$ and there exists an atlas of the bundle $\{(U_i,\psi_i)\}_{i\in I}$ such that the charts
$$
\psi_i\colon\left(\pi^{-1}(U_i),\fE\right) \longrightarrow (U_i\times F,\fB\times\fF)
$$
are foliated diffeomorphisms.
\end{definition}

\begin{example}
Let $\K$ be an SRF over a compact space $X$ and $\pi\colon D_S\longrightarrow S$ be the sphere bundle of a singular stratum $S$, for some Thom--Mather system. The restriction of $\pi$ to the regular part $E=D_S\cap R$ is a thick foliated bundle over $B=S$ whose fibre is the regular part of the model SRF on a sphere (see section~\ref{subsection.struct.group}). The foliations  $\fE$ and $\fB$ are the corresponding restrictions of $\K$. 	
\end{example}

\begin{examples}
The following classical structures are particular cases of thick foliated bundles:
\begin{enumerate}[(i)]
	
	\item If $G=F$, $\pi$ is principal and $\fF$ is the pointwise foliation, then $\pi$ is a principal foliated bundle. 
	
	\item If $\fB$ is the one leaf foliation and $\fF$ is the pointwise foliation, then $\pi$ is a foliated bundle in the sense of \cite{haefliger73}.
	
	\item If $\fB$ is the pointwise foliation, $F$ is compact and $\fF$ is the one leaf foliation, then $\pi$ is a Seifert bundle.
\end{enumerate}	
\end{examples}

If $\pi\colon (E,\fE)\longrightarrow (B,\fB)$ is a thick foliated bundle of fibre $(F,\fF)$, the condition $G\le \Diff{(F,\fF)}$ allows us to define a fibrewise foliation $\fH$ on $E$, just considering
\begin{equation}\label{eq:fibrewise}
\psi_i\colon\left(\pi^{-1}(U_i),\fH\right) \longrightarrow (U_i\times F,\{\text{points}\}\times\fF)
\end{equation}
as foliated diffeomorphisms. We will say that $\fH$ is the {\em fibrewise foliation} associated to $\pi$. The following Lemma shows the interplay between the mean curvature forms of the foliations involved in a thick foliation bundle. The proof is similar to the first part of the proof of \cite[Lemma~7]{nozawa-manuscripta}.

\begin{lemma}\label{lem:thickasabrick}
Let	$\pi\colon (E,\fE)\longrightarrow (B,\fB)$ be a thick foliated bundle. Let $\gamma$ be a metric on $E$ and denote by $\kappa_{\gamma}$ the mean curvature form of $(E,\fH,\gamma)$, where $\fH$ is the associated fibrewise foliation. Then  for any metric $\mu_B$ on $B$, there exists a metric $\mu_E$ on $E$ such that the mean curvature form of $(E,\fE,\mu_E)$ is  
\begin{equation}\label{eq:kappas_bundle}
\kappa_{\mu_E}=(\kappa_{\gamma})_{0,1} + \pi^*\kappa_{\mu_B},
\end{equation}
where 
the bigrading is
associated to $TE=T\fE\oplus (T\fE)^{\bot\gamma}$.
\end{lemma}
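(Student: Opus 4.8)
The plan is to build $\mu_E$ explicitly from the three ingredients at hand — the fibrewise metric $\gamma$, the base metric $\mu_B$, and the bundle atlas — and then compute the mean curvature form via Rummler's formula \eqref{eq:rummler}. First I would fix a partition of unity subordinate to the trivializing cover $\{U_i\}$ and use the structure group condition $G\le\Diff(F,\fF)$ to organize the local product structures. The key geometric point is that $TE$ splits as $T\fE\oplus(T\fE)^{\perp\gamma}$, and one wants a metric $\mu_E$ for which (a) the leafwise volume form is controlled by $\gamma$, so that the $(0,1)$-component of the mean curvature is exactly $(\kappa_\gamma)_{0,1}$, and (b) the horizontal behaviour transverse to $\fE$ reproduces $\pi^*\mu_B$, so that the remaining $(1,0)$-part of the mean curvature is $\pi^*\kappa_{\mu_B}$. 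Concretely, I would take $\mu_E$ to restrict to $\gamma$ on the subbundle $T\fH$ of fibrewise-foliation tangent vectors, to agree with $\pi^*\mu_B$ on a chosen complement mapping isomorphically to $TB$ under $d\pi$, and to be a suitable product/warped combination in between; the precise recipe is the same bookkeeping as in the first part of the proof of \cite[Lemma~7]{nozawa-manuscripta}.

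Next I would run Rummler's formula. Choose a local oriented orthonormal frame $E_1,\dots,E_p$ of $T\fE$; since $\fE$ sits inside the fibrewise foliation's "partner" picture only up to the base directions, I would arrange the frame so that it is adapted to the flag $T\fH\subseteq T\fE$ (possible because $\fH$ is a subfoliation of $\fE$ — the fibrewise leaves are contained in the $\fE$-leaves, as one sees from \eqref{eq:fibrewise} versus the definition of a thick foliated bundle). Then $\chi_{\mu_E}$ factors, up to the chosen normalizations, as a product of the fibrewise characteristic form $\chi_\gamma$ along $T\fH$ with a form pulled back from the base. Differentiating and contracting against a normal vector $Y\in C^\infty((T\fE)^{\perp\gamma})$, the Leibniz rule splits $d\chi_{\mu_E}$ into a term that only sees the fibrewise data — giving $(\kappa_\gamma)_{0,1}$ after projecting onto the $(T\fE)^{\perp\gamma}$ directions — and a term that descends through $\pi$ and produces $\pi^*\kappa_{\mu_B}$ by naturality of the mean curvature form under the foliated submersion $\pi$. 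The vanishing of $\kappa_{\mu_E}$ on $T\fE$ is automatic from the definition, so only the normal evaluation matters.

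The main obstacle I anticipate is the cross-terms: when one differentiates the product decomposition of $\chi_{\mu_E}$, mixed derivatives involving both a fibre direction and a base direction can appear, and a priori these need not vanish or assemble into either $(\kappa_\gamma)_{0,1}$ or $\pi^*\kappa_{\mu_B}$. Controlling them is exactly where the hypothesis $G\le\Diff(F,\fF)$ (together with the fact that the transition functions are fibrewise) is used: it guarantees that the "$U_i\times F$" product structures are compatible across overlaps in a way that makes the fibrewise foliation $\fH$ and its leafwise volume globally well defined, and it lets one choose $\mu_E$ so that the horizontal distribution $(T\fE)^{\perp\gamma}$ (or at least its projection modulo $T\fH$) is $\pi$-related to $TB$ with no holonomy obstruction. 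With that choice the cross-terms either cancel by antisymmetry of $d\chi$ or get absorbed into the pullback term. The remaining steps — checking that the resulting form is basic-type in the right bigrading and independent of the auxiliary choices — are routine and parallel to the cited argument of Nozawa.
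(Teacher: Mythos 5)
Your overall strategy (build $\mu_E$ from $\gamma$ on the fibre directions and $\pi^*\mu_B$ transversally, then compute $\kappa_{\mu_E}$ via Rummler's formula from a factorization of the characteristic form) is the same as the paper's, but the step you yourself flag as the main obstacle --- the cross-terms --- is resolved by the wrong mechanism, and this is a genuine gap. The paper needs no structure-group compatibility, no partition of unity, and no ``holonomy-free'' horizontal distribution at this point (the lemma does not even assume $G$ compact, and $(T\fE)^{\bot\gamma}$ is fixed by the given $\gamma$, so it cannot be ``chosen'' to be $\pi$-related to $TB$; in general no flat horizontal distribution exists). What actually kills the cross-terms is a pointwise linear-algebra choice: a splitting $TE=\zeta_1\oplus T\fH\oplus\zeta_2\oplus\zeta_3$ with $\ker\pi_*=\zeta_1\oplus T\fH$, $T\fE=T\fH\oplus\zeta_2$, $\zeta_1\oplus\zeta_3\le (T\fH)^{\bot\gamma}$, and $\pi_*$ mapping $\zeta_3$ isomorphically onto $(T\fB)^{\bot\mu_B}$; one then sets $\mu_E=\gamma\vert_{\ker\pi_*}+\pi^*\mu_B$ with the four summands orthogonal (no warping is needed). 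The point of conditions on $\zeta_3$ is that every $\mu_E$-normal vector $Y$ to $\fE$ lies in $(T\fH)^{\bot\gamma}$ \emph{and} projects into $(T\fB)^{\bot\mu_B}$, so $i_Y\chi_\gamma=0$ and $i_Y\pi^*\chi_{\mu_B}=0$; this is exactly why $\chi_{\mu_E}=\chi_\gamma\wedge\pi^*\chi_{\mu_B}$ holds and why the Leibniz expansion of $d\chi_{\mu_E}(Y,\dots)$ produces only the two terms $\kappa_\gamma(Y)$ and $\pi^*\kappa_{\mu_B}(Y)$, with nothing to ``cancel by antisymmetry'' or ``absorb into the pullback''. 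Your proposal never constructs such an adapted complement, so the claimed absence of cross-terms is unsupported.

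Two further inaccuracies to correct: Rummler's formula for $\kappa_{\mu_E}$ must be evaluated on $(T\fE)^{\bot\mu_E}$ (which, with the construction above, equals $\zeta_1\oplus\zeta_3$), not on $(T\fE)^{\bot\gamma}$ as you write; and the trivializing atlas and partition of unity play no role in this lemma --- the atlas only enters earlier, to define $\fH$, and the partition-of-unity/averaging argument belongs to the subsequent lemma on tautness of the fibrewise foliation, where compactness of $G$ is genuinely used.
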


\begin{proof}
We consider a bundle decomposition 
$$
TE=
\rlap{$\underbrace{\phantom{\zeta_1\oplus T\fH}}_{\displaystyle{\ker{\pi_*}}}$}
\zeta_1 
\oplus \stackrel{\displaystyle{T\fE}}{\overbrace{T\fH\oplus \zeta_2}} 
\oplus 
\zeta_3\quad
\text{satisfying}
\quad
\begin{minipage}{0.55\textwidth}
\begin{enumerate}[(i)]
\item $\ker{\pi_*}=\zeta_1\oplus T\fH$;\label{item:kerpi}
\item $T\fE=T\fH\oplus\zeta_2$;\label{item:TE}
\item $\zeta_1 \oplus \zeta_3 \le (T\fH)^{\bot\gamma}$;\label{item:orthogonal}
\item $\pi_*\vert_{\zeta_2}\colon\zeta_2\longrightarrow T\fB$ is an isomorphism;\label{item:TB}
\item  $\pi_*\vert_{\zeta_3}\colon\zeta_3\longrightarrow(T\fB)^{\bot\mu_B}$ is an isomorphism.\label{item:TBog}
\end{enumerate}
\end{minipage}
$$
To construct such a decomposition, first, take $\zeta_1= (T\fH)^{\bot\gamma}\cap\ker\pi_*$, which gives \eqref{item:kerpi}. Take $\zeta_2$ any supplementary of $T\fH$ in $T\fE$, thus satisfying \eqref{item:TE}. As  $\pi_*\colon T\fE\to T\fB$ is surjective and $T\fH\le\ker{\pi_*}$, for degree reasons we have \eqref{item:TB}. Now, consider $D$ a supplementary of $\zeta_1$ in $(T\fH)^{\bot\gamma}$. Notice that $\pi_*\vert_D\colon D\to TB$ is an isomorphism. Finally, we take $\zeta_3=(\pi_*\vert_D)^{-1}((T\fB)^{\bot\mu_B})$, which gives \eqref{item:orthogonal} and \eqref{item:TBog}.

Now, we define the metric $\mu_E=\gamma\vert_{\ker\pi_*}+\pi^*\mu_B$ so that the four summands of the previous decomposition are $\mu_E$-orthogonal.
To prove formula \eqref{eq:kappas_bundle} it suffices to check it locally; namely, for any neighbourhood $U\subset E$ small enough so that both $\fH_U$ and $\fE_U$ are oriented (and thus, $\fB_{\pi(U)}$). We now claim that, in $U$, we have
\begin{equation}\label{eq:chiwedge}
\chi_{\mu_E}=\chi_{\gamma}\wedge \pi^*\chi_{\mu_B},
\end{equation}
where $\chi_{\gamma}$ is the characteristic form of $(M,\fH,\gamma)$.
By definition (see \eqref{eq:def chi}) $\chi_{\mu_E}$ is the form that vanishes when applied to any vector orthogonal to $\fE$ and whose restriction to its leaves is a volume form of norm one.

So  consider a $\gamma$-orthonormal and positively oriented parallelism $X_1,\dots,X_p$ of $T\fH_U$ and a $\mu_U$-orthonormal and positively oriented parallelism $Z_1,\dots,Z_q$ of $T\fB\vert_{\pi(U)}$, which are $\pi_*$-related to the $\mu_B$-orthonormal parallelism $Y_1,\dots,Y_q$ of $\zeta_2\vert_U$. As $\pi_*(X_i)=0$ for $i=1,\dots,p$, we have:
\begin{equation*}
\begin{split}
(\chi_{\gamma}\wedge \pi^*\chi_{\mu_B})(X_1,\dots,X_p,Y_1,\dots,Y_q)&=
\chi_{\gamma}(X_1,\dots,X_p) \pi^*\chi_{\mu_B}(Y_1,\dots,Y_q)\\
&=\chi_S(Z_1,\dots,Z_q)=1=\chi_{\mu_E}(X_1,\dots,X_p,Y_1,\dots,Y_q).
\end{split}
\end{equation*}
Now, take $Y$ a section of $(T\fE)^{\bot\mu_E}\vert_U=\zeta_1\vert_U\oplus \zeta_3\vert_U$. By \eqref{item:orthogonal}, we have $Y\in C^{\infty}(T\fH^{\bot\gamma})$, and thus, $i_Y\chi_{\gamma}=0$. We also have  $i_Y(\pi^*\chi_B)=0$, because $\pi_*(Y)\in C^{\infty}((T\fB)^{\bot\mu_B}\vert_{\pi(U)})$ (due to \eqref{item:orthogonal} and  \eqref{item:TBog}). So  \eqref{eq:chiwedge} follows.

To prove \eqref{eq:kappas_bundle}, first take $X$ a section of  $T\fE\stackrel{\eqref{item:TE}}{=}T\fH\oplus\zeta_2$. We have 
$\pi^*(\kappa_{\mu_B})(X)=0$, 
because $\pi_*(X)\in C^{\infty}(T\fB)$, and $(\kappa_{\gamma})_{0,1}(X)=0$ because of the bigrading. So  \eqref{eq:kappas_bundle} holds for $T\fE$.

It remains to check \eqref{eq:kappas_bundle} for $Y$ a section of $(T\fE)^{\bot\mu_E}\vert_U=\zeta_1\vert_U\oplus \zeta_3\vert_U$.
Notice that $\pi_*Y\in C^{\infty}((T\fB)^{\bot\mu_B})$ and $Y\in C^{\infty}((T\fH)^{\bot\gamma})$. So  we finally have:
\begin{equation*}
\begin{split}
\kappa_{\mu_E}(Y)
&\stackrel{\eqref{eq:rummler}}{=}
-d\chi_{\mu_E}(Y,X_1,\dots,X_p,Y_1,\dots,Y_q)\\
&\stackrel{\eqref{eq:chiwedge}}{=}
-d\chi_{\gamma}(Y,X_1,\dots,X_p) \underbrace{\pi^*\chi_{\mu_B}(Y_1,\dots,Y_q)}_{=1}- \underbrace{\chi_{\gamma}(X_1,\dots,X_p)}_{=1}\pi^*d\chi_{\mu_B}(Y,Y_1,\dots,Y_q)\\
&\stackrel{\eqref{eq:rummler}}{=}\kappa_{\gamma}(Y)+\pi^*\kappa_{\mu_B}(Y)=(\kappa_{\gamma})_{0,1}(Y)+\pi^*\kappa_{\mu_B}(Y).\mbox{\qedhere}
\end{split}
\end{equation*}
\end{proof}

\begin{lemma}\label{lema.cover}
	Let $\pi\colon (E,\fE)\longrightarrow (B,\fB)$ be a thick foliated bundle with taut fibre $(F,\fF)$ and compact structure group $G$. Then there exists a foliated Galois $\Z/2\Z$-covering map $\tau\colon (E^{\sharp},\fE^{\sharp})\to (E,\fE)$  such that $\pi^{\sharp}=\pi\circ\tau$ is a thick foliated bundle whose fibre $(F^{\sharp},\fF^{\sharp})$ is taut, and  admitting $G$ as structure group, which preserves a given orientation in $T\fF^{\sharp}$.
\end{lemma}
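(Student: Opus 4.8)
The plan is to build the covering $E^\sharp$ fibrewise from an orientation double cover of the fibre foliation and then descend/transport it to $E$ using the compact structure group. First I would address the fibre: since $(F,\fF)$ is taut, it admits a taut metric; if $T\fF$ is already orientable, one can take $E^\sharp=E\times\Z/2\Z$ trivially, so the interesting case is $T\fF$ non-orientable. In that case let $F^\sharp\to F$ be the two-fold covering associated to the orientation character $w_1(T\fF)\in H^1(F;\Z/2\Z)$, with $\fF^\sharp$ the lifted foliation; then $T\fF^\sharp$ is canonically oriented, and pulling back the taut metric keeps $\fF^\sharp$ taut (tautness is preserved under foliated coverings, e.g.\ via Rummler--Sullivan's criterion, since the pullback of a $d\fF$-closed positive $p$-form is $d\fF^\sharp$-closed and positive).

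Next I would promote this to a covering of the total space. The key point is that the deck group $\Z/2\Z$ of $F^\sharp\to F$ acts by foliated diffeomorphisms commuting with the $G$-action, provided $G$ preserves the orientation class: because $G\le\Diff(F,\fF)$ is a connected-or-at-least-orientation-preserving compact group — more precisely, $G$ acts on $H^1(F;\Z/2\Z)$ fixing $w_1(T\fF)$ since each $g\in G$ is a foliated diffeomorphism — every $g\in G$ lifts to $F^\sharp$, and the two lifts differ by the deck transformation. One must check that the lifts can be chosen to form a group, i.e.\ that the obstruction in $H^2(G;\Z/2\Z)$ vanishes or can be absorbed; since $G$ is compact this is handled by averaging / by passing to the identity component plus a finite check, and in any case one may replace $G$ by the (still compact) group $\widetilde G$ of lifts, which is an extension of $G$ by $\Z/2\Z$ and acts on $F^\sharp$ preserving $\fF^\sharp$ and its orientation. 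Then I would define $E^\sharp=P\times_{\widetilde G}F^\sharp$, where $P$ is the principal $G$-bundle of the given atlas pulled back along $\widetilde G\to G$; concretely, over each chart $U_i$ one sets $(\pi^\sharp)^{-1}(U_i)\cong U_i\times F^\sharp$ and glues by the lifted cocycle $\widetilde g_{ij}$. This $E^\sharp$ is a thick foliated bundle over $(B,\fB)$ with fibre $(F^\sharp,\fF^\sharp)$, structure group $\widetilde G$ (compact), and the map $\tau\colon E^\sharp\to E$ induced fibrewise by $F^\sharp\to F$ is a foliated Galois $\Z/2\Z$-covering with $\pi^\sharp=\pi\circ\tau$; the orientation of $T\fF^\sharp$ is $\widetilde G$-invariant by construction, hence descends to a global orientation of the fibrewise foliation of $E^\sharp$.

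Finally I would verify the bookkeeping: that $\fE^\sharp\coloneqq\tau^*\fE$ makes $\tau$ a foliated map (immediate, as $\tau$ is a local foliated diffeomorphism), that $\pi^\sharp$ is still a locally trivial bundle with the claimed fibre and group (clear from the chartwise description), and that $\fF^\sharp$ is taut with oriented $T\fF^\sharp$ (from the first step). The main obstacle I expect is the group-theoretic lifting step: ensuring the $\Z/2\Z$-cover of the fibre is \emph{equivariant}, i.e.\ that the cocycle $g_{ij}$ of the original atlas lifts to a cocycle $\widetilde g_{ij}$ valued in a compact group, which is exactly where one needs that $G$ preserves the orientation class $w_1(T\fF)$ — this is why the statement carries the hypothesis ``$G$ preserves a given orientation in $T\fF^\sharp$'' as part of the conclusion, and one must be slightly careful that the chosen taut metric on $F$ and the resulting $w_1$ are $G$-invariant, which can always be arranged by averaging the metric over the compact group $G$ before passing to the cover.
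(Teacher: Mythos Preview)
Your overall strategy---take the tangent orientation double cover $(F^\sharp,\fF^\sharp)\to(F,\fF)$, lift the $G$-action, and replace the fibre---is exactly the paper's. But you overcomplicate the lifting step and, as written, end up proving less than the lemma asserts.

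The lemma claims the new bundle has structure group $G$ itself, not an extension $\widetilde G$. The observation you are missing is that the lift is \emph{canonical}: each $g\in G\le\Diff(F,\fF)$ has exactly two lifts to $F^\sharp$, and precisely one of them, say $g^\sharp$, preserves the given orientation of $T\fF^\sharp$ (the deck involution reverses it). Uniqueness then forces $(\mathrm{Id}_F)^\sharp=\mathrm{Id}_{F^\sharp}$ and $(f\circ g)^\sharp=f^\sharp\circ g^\sharp$, so $g\mapsto g^\sharp$ is a genuine group homomorphism $G\to\Diff(F^\sharp,\fF^\sharp)$. The obstruction in $H^2(G;\Z/2\Z)$ you worry about is therefore zero for a structural reason; your claim that compactness ``handles'' a discrete cohomological obstruction by averaging does not make sense, and retreating to $\widetilde G$ does not give the stated conclusion.

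Two smaller points. First, $w_1(T\fF)$ is automatically fixed by every foliated diffeomorphism of $(F,\fF)$, since such a map carries $T\fF$ to itself; there is nothing to arrange by averaging the metric. Second, once you have the algebraic homomorphism $g\mapsto g^\sharp$, you still owe a check that the resulting map $G\times F^\sharp\to F^\sharp$ is smooth. The paper does this by writing $g^\sharp$ in local foliated charts as $(x,t)\mapsto(g(x),\operatorname{sign}\det A(x)\cdot t)$, where $A(x)$ is the tangential block of the Jacobian of $g$, which exhibits continuity (hence smoothness) of the assignment $g\mapsto g^\sharp$. With that in hand one simply rebuilds the bundle with fibre $(F^\sharp,\fF^\sharp)$ using the \emph{same} $G$-valued cocycle, obtaining $E^\sharp$ and the covering $\tau$.
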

\begin{proof}
	Consider ${p}\colon (F^{\sharp},\fF^{\sharp})\longrightarrow(F,\fF)$ the tangent orientation covering (see \cite[p.~162]{hectorA}) of $(F,\fF)$, which is a Galois $\Z/2\Z$-covering space such that $T\fF^{\sharp}$ is orientable. Notice that $\fF^{\sharp}$ is taut for the pullback of a taut metric on $F$. For any $f\in\Diff{(F,\fF)}$, we can define $f^{\sharp}\in\Diff{(F^{\sharp},\fF^{\sharp})}$ as the only lifting of $f$ which preserves a given orientation of $T\fF^{\sharp}$. This unicity gives $(Id_{F})^{\sharp}=Id_{F^{\sharp}}$ and $(f\circ g)^{\sharp}=f^{\sharp}\circ g^{\sharp}$. So, we have defined an algebraic group action
	$\Psi_G\colon G\times F^{\sharp}\longrightarrow F^{\sharp}$. 
	
	Let's prove that $\Psi_G$ is, indeed, a smooth Lie group action. It suffices to see that it is continuous, which we will show from its local expression. Without loss of generality, we can take $(U_i,\varphi_i)$ and $(U_j,\varphi_j)$ two foliated charts of $(F,\fF)$ satisfying $f(U_i)\subset U_j$. We then have
	$$
	\xymatrix{U_i\times \Z/2\Z\ar[r]^{f^{\sharp}}\ar[d]_p &
		U_j\times \Z/2\Z\ar[d]_p\\
		U_i\ar[r]^f & 
		U_j, }
	$$
	where the vertical maps are just the projection on the first factor. We have $f^{\sharp}(x,t)=(f(x),\mathop{\rm sign}(|A(x)|))$, being
	$$
	\left(\varphi_j\circ f\circ \varphi_i^{-1}\right)_{*\varphi_i(x)}=
	\left(
	\begin{array}{c|c}
	A(x)& 0\\
	\hline
	B(x)& C(x)
	\end{array}
	\right).
	$$
	We thus get that the correspondence $f\longmapsto f^{\sharp}$ is continuous, and thus $\Psi_G$ is continuous, hence smooth. We just have to change the fibre $(F,\fF)$ by $(F^{\sharp},\fF^{\sharp})$ in the thick foliated bundle $\pi$ to get $\tau$ as desired.
\end{proof}

The proof of the following Lemma is partially similar to that of \cite[Theorem 3.1]{ghys}:

\begin{lemma}\label{lema:particiones de la unidad}
The fibrewise foliation associated to a thick foliated bundle with taut fibre  and compact structure group is taut.
\end{lemma}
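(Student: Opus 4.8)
The plan is to reduce the statement to the orientable situation via Lemma~\ref{lema.cover} and then build an explicit $d\fE^{\sharp}$-closed form that is positive on the fibrewise leaves, invoking Rummler--Sullivan's criterion. First I would apply Lemma~\ref{lema.cover} to replace $\pi$ by the covering $\pi^{\sharp}\colon(E^{\sharp},\fE^{\sharp})\to(B,\fB)$, whose fibre $(F^{\sharp},\fF^{\sharp})$ is taut with $T\fF^{\sharp}$ oriented and whose structure group $G$ acts preserving that orientation. Tautness of the fibrewise foliation $\fH$ on $E$ is equivalent to tautness of the fibrewise foliation $\fH^{\sharp}$ on $E^{\sharp}$, since the covering map $\tau$ is foliated and a local diffeomorphism, so it pulls back a taut metric for $\fH$ to a taut metric for $\fH^{\sharp}$ and conversely pushes one forward (or one simply observes that the mean curvature form descends). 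Hence it suffices to prove the Lemma assuming $T\fF$ oriented and $G\le\Diff(F,\fF)$ orientation-preserving on $T\fF$.

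Next I would fix a taut metric on $F$ and let $\omega_F\in\Omega^p(F)$ be its leafwise volume form, where $p=\dim\fF$; by Rummler--Sullivan, after choosing the taut metric we have $d_{\fF}\omega_F=0$, i.e. $d\omega_F(V_1,\dots,V_p,W)=0$ for $V_i\in C^{\infty}(T\fF)$, $W\in C^{\infty}(TF)$. The point where the compact structure group enters: the space of such leafwise-positive $d_\fF$-closed $p$-forms is convex, and $G$ is compact, so by averaging over $G$ with respect to Haar measure (using that $G$ preserves $\fF$ and the orientation of $T\fF$, so that $g^*\omega_F$ is again leafwise positive and $d_\fF$-closed) I obtain a $G$-invariant leafwise-positive $d_\fF$-closed form $\omega_F$. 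This $G$-invariance is exactly what is needed to glue the local forms $\psi_i^*(\text{pr}_F^*\omega_F)$ on the charts $\pi^{-1}(U_i)$ into a global form $\omega\in\Omega^p(E)$: on overlaps the transition functions lie in $G$ and act trivially on $\omega_F$, so the locally defined forms agree. By construction $\omega$ restricts on each fibrewise leaf (a leaf of $\fF$ inside a fibre) to the positive form $\omega_F$, hence is positive on $T\fH$.

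It then remains to check that $\omega$ is $d_{\fH}$-closed, i.e. $d\omega(X_1,\dots,X_p,Y)=0$ for $X_i\in C^{\infty}(T\fH)$ and $Y\in C^{\infty}(TE)$ arbitrary. This is a local computation in a chart $\psi_i$: there $\omega=\text{pr}_F^*\omega_F$ and $T\fH$ corresponds to $\{0\}\times T\fF$, so $d\omega=\text{pr}_F^*(d\omega_F)$, and applying it to $p$ vectors tangent to $\fF$ in the fibre together with any further vector $Y=Y_B+Y_F$ (decomposed along $TU_i\oplus TF$), the $\text{pr}_F^*$ kills the $Y_B$ component while the $Y_F$ component is handled by $d_\fF\omega_F=0$. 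Thus $\omega$ is a $d_{\fH}$-closed, leafwise-positive $p$-form, and Rummler--Sullivan's criterion (applied after passing, if necessary, to the transverse/tangent orientation covering of $\fH$, which is already built into the reduction above since $T\fF^{\sharp}$ oriented makes $T\fH^{\sharp}$ oriented) shows $\fH$ is taut. The main obstacle I anticipate is the gluing step: one must be careful that the fibrewise foliation $\fH$ is genuinely well defined independently of the atlas (this is granted by \eqref{eq:fibrewise} and $G\le\Diff(F,\fF)$) and that $G$-invariance of $\omega_F$ is both achievable by averaging \emph{and} strong enough to make the $\psi_i^*(\text{pr}_F^*\omega_F)$ patch — here compactness of $G$ is used twice, once to average and once implicitly to ensure the Haar integral of a smooth family of forms is smooth.
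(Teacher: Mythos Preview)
Your reduction to the tangentially oriented case via Lemma~\ref{lema.cover} and the averaging over $G$ to obtain a $G$-invariant, leafwise-positive, $d_{\fF}$-closed form $\omega_F$ are fine and match the paper. The genuine gap is the gluing step: $G$-invariance of $\omega_F$ does \emph{not} make the local forms $\psi_i^*(\operatorname{pr}_F^*\omega_F)=\phi_i^*\omega_F$ agree on overlaps. The transition diffeomorphism of $(U_i\cap U_j)\times F$ is $(u,f)\mapsto(u,g_{ji}(u)\cdot f)$, and pulling $\operatorname{pr}_F^*\omega_F$ back through it produces extra terms coming from the derivative of $g_{ji}$ in the base direction; $G$-invariance only controls the pure fibre direction. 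Concretely, take $F=\S^1$ with its one-leaf foliation, $\omega_F=d\theta$, $G=\mathrm{SO}(2)$, and a transition $g_{ji}(u)=$ rotation by angle $u$: then the pullback of $d\theta$ by $(u,\theta)\mapsto\theta+u$ is $d\theta+du\neq d\theta$. So no global form $\omega$ arises this way, and your subsequent ``local'' verification of $d_{\fH}$-closedness has nothing to apply to.

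The paper's remedy is to combine the $\phi_i^*\overline{\chi}$ using a partition of unity $\{r_i\}$ pulled back from $B$, setting $\chi=\sum_i r_i\cdot\phi_i^*\overline{\chi}$. This is globally well defined by construction, but $d_{\fH}$-closedness is then no longer a purely local check: one must handle the cross-terms $dr_i\wedge\phi_i^*\overline{\chi}$. Two observations close the argument. First, $dr_i$ vanishes on $T\fH$ because $r_i$ factors through $\pi$. Second, the \emph{restrictions} $\phi_i^*\overline{\chi}|_{\Lambda^p T\fH}$ do agree on overlaps --- this weaker equality \emph{does} follow from $G$-invariance, since on vectors tangent to $\fH$ the base-derivative of $g_{ji}$ never enters --- so that $\sum_i dr_i(Y)\cdot\phi_i^*\overline{\chi}(X_1,\dots,X_p)$ collapses via $\sum_i dr_i=0$. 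Your argument can be repaired along exactly these lines, but the partition of unity is not optional.
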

\begin{proof}
In \cite[Lemma 6.3]{suso} it is proven that the tautness character of a foliation is preserved by finite coverings (that result is established for  Riemannian foliations on compact manifolds, but the proof does not use those assumptions). So  by Lemma~\ref{lema.cover}, we can assume that the fibre $(F,\fF)$ is tangentially oriented and that the structure group $G$ preserves a given orientation on $T\fF$  to prove that $\fH$ is taut.

Take $\chii\in\Omega^p(F)$ a $d\fF$-closed characteristic form, and consider its averaged form
$$
\overline\chii=\int_G g^*\chii\ dg\in\Omega^p(F),
$$
where  $p=\dim\fF$. We have that $\overline\chii$ is also $d\fF$-closed because the elements of $G$ preserve $\fF$. As they also preserve the orientation of $T\fF$, then the restriction $\overline{\chii}\vert_{T\fF}$ is positive. 

 We can take $\{(U_i,\psi_i)\}_{i\in I}$ an atlas of the thick fibre bundle $\pi$ whose cocycle belongs to $G$, and a partition of unity $\{(U_i,\rho_i)\}_{i\in I}$. For all $i\in I$, define $\phi_i$ and $r_i$ so that the following diagrams are commutative:
$$
\xymatrix{
[0,1] & \pi^{-1}(U_i)
		\ar[l]_{r_i}
		\ar[d]^{\psi_i}
		\ar[dr]^{\phi_i} & \\
U_i \ar[u]^{\rho_i} & 
	U_i\times F
		\ar[l]^{p_i}
		\ar[r]_(0.6){(p_F)_i} & F
}
$$ 
where $p_i$ and $(p_F)_i$ are the projections onto each factor. Now, we define
$$
\chi=\sum_i r_i\cdot  
\phi_i^*
\overline{\chi}
\in\Omega^p(E),
$$
being $E$ the total space of the thick fibre bundle. We now prove that the associated fibrewise foliation $\fH$ is taut by showing that $\chi$ satisfies the conditions of the Rummler-Sullivan criterion. First, we have that $\chi\vert_{T\fH}$ is positive  because $\overline\chi\vert_{T\fF}$ is positive and the maps $\phi_i\colon(\pi^{-1}(U_i),\fH)\to (F,\fF)$ are foliated (see \eqref{eq:fibrewise}). It remains to see that $\chi$ is $d\fH$-closed. Take $X_1,\dots,X_p\in C^{\infty}(T\fH)$  and $Y\in C^{\infty}(TE)$. On one hand, for all $i\in I$ and $j\in\{1,\dots,p\}$, we have
\begin{equation}\label{eq:tangentefibras}
dr_i(X_j)=X_j(r_i)=X_j(\rho_i\circ p_i\circ \psi_i)=0,
\end{equation}
because $\rho_i\circ p_i\circ \psi_i$ is constant along the leaves of $\fH$.
On the other hand, for all $i,j\in I$, we have that $\phi_i ^* \overline{\chi}=\phi_j ^* \overline{\chi}$,
because $\phi_j\circ\phi_i^{-1}\in G$ and $\overline{\chi}$ is $G$-invariant. Thus, fixing $i_0\in I$,  we have
\begin{equation}\label{eq:fijamosi0}
\sum_{i\in I} dr_i(Y)\phi_i^*\overline{\chi}(X_{1},\dots,X_{p})
=
\phi_{i_0}^*\overline{\chi}(X_{1},\dots,X_{p})\sum_{i\in I} dr_i(Y)\\
=0,
\end{equation}
because $\sum\limits_{i\in I} dr_i(Y)=d\bigl(\sum\limits_{i\in I} r_i\bigr)(Y)= d(1)(Y)=0$. Now, \eqref{eq:tangentefibras} and \eqref{eq:fijamosi0} yield
$$
\sum_{i\in I} dr_i \wedge \phi_i^*\overline{\chi}(X_1,\dots,X_p,Y)=0,
$$
which, finally, leads to
\begin{equation*}
d\chi(X_1,\dots,X_p,Y)= \sum_{i\in I} r_i\cdot d\overline{\chi}(\phi_{i*}X_1,\dots,\phi_{i*}X_p,\phi_{i*}Y)=0,
\end{equation*}
because $\overline{\chi}$ is $d\fF$-closed and $\phi_{i*}X_j\in C^{\infty}(T\fF)$ for all $i\in I$ and $j=1,\dots,p$. 
\end{proof}

\begin{proposition}\label{prop:thickasabrick}
Let $\pi\colon (E,\fE)\longrightarrow (B,\fB)$ be a thick foliated bundle of CERFs with taut fibre and compact structure group. Then  the Álvarez classes of $\fE$ and $\fB$ satisfy 
$$
\kappab_{\fE}=\pi^*\kappab_{\fB}.
$$
\end{proposition}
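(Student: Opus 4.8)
The plan is to combine the two preceding lemmas. By Lemma~\ref{lema:particiones de la unidad}, the fibrewise foliation $\fH$ associated to $\pi$ is taut, so there exists a taut metric $\gamma_0$ on $E$ for $(E,\fH)$; its mean curvature form $\kappa_{\gamma_0}$ vanishes. However, we need a metric that is \emph{strongly tense} for $\fE$ in order to read off the Álvarez class, so we cannot simply feed $\gamma_0$ into Lemma~\ref{lem:thickasabrick} and conclude. Instead, I would first invoke the CERF machinery of Section~\ref{subsec:tautCERF}\eqref{item:kappaCERF}: since $(B,\fB)$ is a CERF, it admits a strongly tense metric $\mu_B$, and $[\kappa_{\mu_B}]=\kappab_{\fB}$. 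Then apply Lemma~\ref{lem:thickasabrick} with $\gamma=\gamma_0$ and this $\mu_B$ to obtain a metric $\mu_E$ on $E$ whose $\fE$-mean curvature form is $\kappa_{\mu_E}=(\kappa_{\gamma_0})_{0,1}+\pi^*\kappa_{\mu_B}=\pi^*\kappa_{\mu_B}$, using $\kappa_{\gamma_0}=0$.

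Now $\kappa_{\mu_E}=\pi^*\kappa_{\mu_B}$ is basic for $\fE$ (it is the pullback of a form on $B$, which is $\fB$-basic, and $\pi$ is a foliated map, so the pullback is $\fE$-basic) and closed (again a pullback of a closed form), hence $\mu_E$ is strongly tense for $\fE$. Since $(E,\fE)$ is a CERF, the Álvarez class $\kappab_{\fE}$ is represented by $\kappa_{\mu_E}$ of any strongly tense metric; therefore
$$
\kappab_{\fE}=[\kappa_{\mu_E}]=[\pi^*\kappa_{\mu_B}]=\pi^*[\kappa_{\mu_B}]=\pi^*\kappab_{\fB},
$$
as desired. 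The map $\pi^*\colon H^1(B/\fB)\to H^1(E/\fE)$ on basic cohomology is the one induced by the foliated map $\pi$, and the chain of equalities uses only that pullback commutes with $d$ and respects cohomology classes.

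The one point that needs genuine care — and which I expect to be the main obstacle — is the verification that $\mu_E$ is actually strongly tense for $\fE$, i.e. that $\pi^*\kappa_{\mu_B}$ is a \emph{basic} form for $\fE$ and not merely a form annihilated by vectors tangent to $\fE$. One must check $i_X\pi^*\kappa_{\mu_B}=0$ and $i_X d\pi^*\kappa_{\mu_B}=0$ for $X\in C^\infty(T\fE)$: the first holds because $\pi_*X\in C^\infty(T\fB)$ and $\kappa_{\mu_B}$ kills $T\fB$-vectors (indeed $i_X\pi^*\kappa_{\mu_B}=\kappa_{\mu_B}(\pi_*X)$ and the $\fB$-mean curvature form vanishes on $T\fB$), and the second because $d\pi^*\kappa_{\mu_B}=\pi^*d\kappa_{\mu_B}=0$ since $\mu_B$ is strongly tense. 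A secondary subtlety is making sure Lemma~\ref{lem:thickasabrick} is applicable, i.e. that the hypotheses on $(E,\fE)\to(B,\fB)$ as a thick foliated bundle are exactly those in its statement — they are, since here both foliations are regular (strata of an SRF restrict to RFs) and the lemma imposes no compactness. Everything else in the argument is a formal manipulation of pullbacks in basic cohomology and an appeal to the already-established CERF properties.
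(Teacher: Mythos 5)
Your proposal is correct and follows essentially the same route as the paper: take a taut metric for the fibrewise foliation via Lemma~\ref{lema:particiones de la unidad}, a strongly tense metric on the base via the CERF property, combine them with Lemma~\ref{lem:thickasabrick} to get $\kappa_{\mu_E}=\pi^*\kappa_{\mu_B}$, observe this is closed and $\fE$-basic so $\mu_E$ is strongly tense, and conclude by the independence of the Álvarez class from the choice of strongly tense metric. Your explicit verification that $\pi^*\kappa_{\mu_B}$ is $\fE$-basic is a detail the paper only asserts, but it does not change the argument.
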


\begin{proof}
The associated fibrewise foliation $\fH$ is taut by Lemma~\ref{lema:particiones de la unidad}. Take $\gamma$ a taut metric on $(E,\fH)$, that is, satisfying $\kappa_{\gamma}=0$, and take $\mu_B$ a strongly tense metric on $(B,\fB)$. By Lemma~\ref{lem:thickasabrick}, we get a metric $\mu_E$ such that $\kappa_{\mu_E}=\pi^*\kappa_{\mu_B}$. Notice that $\kappa_{\mu_E}$ is a closed $\fE$-basic form, and thus, $\mu_E$ is strongly tense. By \ref{subsec:tautCERF}\eqref{item:kappaCERF}, we get $\kappab_{\fE}=\pi^*\kappab_{\fB}$.
\end{proof}

We finish this section with two illustrations of Proposition~\ref{prop:thickasabrick}.
\begin{remark}
Denote by $\pi\colon (M^1,\F^1)\longrightarrow(M,\F)$ the  transverse orthonormal frame bundle of the RF $(M,\F)$. Then  $\pi$  is a thick foliated principal bundle. The fibre $O(q)$ carries the pointwise foliation, which is trivially taut. Then  by Proposition~\ref{prop:thickasabrick} we get that their \'Alvarez classes satisfy $\kappab_{\F^1}=\pi^*\kappab_{\F}$. This result was proven in \cite[Lemma~7]{nozawa-manuscripta} without using Dom{\'\i}nguez's Theorem. This remark applies for any other foliated principal bundle with compact structure group.
\end{remark}

As an application, if we want to study the tautness of the foliation induced by the action of the product of some groups on a manifold, we can drop the compact factors in the sense precised by the following result.

\begin{corollary}\label{cor:actions}
Let $G_1$ be a compact Lie group and $G_2$ be another Lie group, with $G_1\times G_2$ acting freely on $M$. Denote by $\F$ and $\F_2$ the foliations induced on $M$ by $G_1\times G_2$ and $G_2$, respectively, and by $\widetilde{\F}_2$ the foliation induced by $G_2$ on $M/G_1$. Suppose that those foliations are also CERFs. Then, the following statements are equivalent:
\begin{enumerate}[{\rm (i)}]
	\item $(M,\F)$ is taut;
	\item  $(M,\F_2)$ is taut;
	\item $(M/G_1,\widetilde{\F}_2)$ is taut.
\end{enumerate}
\end{corollary}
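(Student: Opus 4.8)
The plan is to derive Corollary~\ref{cor:actions} from Proposition~\ref{prop:thickasabrick} by exhibiting the relevant quotient maps as thick foliated bundles with compact structure group and (pointwise-)taut fibre. The equivalence of (i) and (ii) rests on the observation that $\F$ and $\F_2$ have the \emph{same} orthogonal distribution up to the action of the compact group $G_1$: more precisely, both foliations are CERFs by hypothesis, so by \ref{subsec:tautCERF}\eqref{item:kappaCERF} each carries an \'Alvarez class, and tautness of either is equivalent to the vanishing of that class. I would first treat (i)$\Leftrightarrow$(iii) and (ii)$\Leftrightarrow$(iii) separately, as the geometry there is cleanest, and then (i)$\Leftrightarrow$(ii) follows by transitivity.

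\emph{Step 1: the map $\pi_1\colon M\to M/G_1$.} Since $G_1$ is compact and acts freely, $\pi_1$ is a principal $G_1$-bundle. The foliation $\F$ on $M$ is $\pi_1$-saturated (its leaves are the $G_1\times G_2$-orbits, which are unions of $G_1$-orbits), and $\F_2$ projects to $\widetilde\F_2$. Viewing $\pi_1\colon (M,\F)\to (M/G_1,\widetilde\F_2)$: the fibre is $G_1$ with the \emph{one-leaf} foliation (the fibre direction is exactly the $G_1$-orbit direction, which lies inside the leaves of $\F$), and the structure group is $G_1$ acting by left translations, which is compact and preserves the one-leaf foliation. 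The one-leaf foliation of a compact group is taut (any bi-invariant metric makes $G_1$ itself — the single leaf — minimal, trivially). Hence $\pi_1\colon(M,\F)\to(M/G_1,\widetilde\F_2)$ is a thick foliated bundle of CERFs with taut fibre and compact structure group, and Proposition~\ref{prop:thickasabrick} gives $\kappab_{\F}=\pi_1^*\kappab_{\widetilde\F_2}$. Since $\pi_1$ is a submersion with connected fibres, $\pi_1^*$ is injective on basic cohomology, so $\kappab_{\F}=0\Leftrightarrow\kappab_{\widetilde\F_2}=0$, i.e.\ (i)$\Leftrightarrow$(iii).

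\emph{Step 2: the map $\pi_2\colon M\to M/G_1$ again, but with $\F_2$ upstairs.} Now regard the same $\pi_1$ as a map $(M,\F_2)\to(M/G_1,\widetilde\F_2)$. Here the subtlety is that the $G_1$-orbit direction is transverse to $\F_2$ (the $G_2$-orbits), so the fibre $G_1$ carries the \emph{pointwise} foliation, and $G_1$ acts by translations preserving it; the pointwise foliation is trivially taut. Again $\pi_1\colon(M,\F_2)\to(M/G_1,\widetilde\F_2)$ is a thick foliated bundle of CERFs (by hypothesis all three foliations are CERFs) with compact structure group and taut fibre, so Proposition~\ref{prop:thickasabrick} yields $\kappab_{\F_2}=\pi_1^*\kappab_{\widetilde\F_2}$ and hence (ii)$\Leftrightarrow$(iii) by the same injectivity argument. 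Combining Steps 1 and 2 gives all three equivalences.

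\emph{Main obstacle.} The delicate point is checking that the two foliated-bundle structures on $\pi_1$ genuinely satisfy the definition of a thick foliated bundle — in particular that one can choose local trivializations $\psi_i$ of the principal $G_1$-bundle that are \emph{foliated} with respect to the product foliations $\widetilde\F_2\times(\text{one-leaf})$ in Step~1 and $\widetilde\F_2\times(\text{pointwise})$ in Step~2. For Step~2 this is automatic from the principal-bundle structure (the horizontal distribution of any $G_1$-connection whose holonomy is trivial along $\widetilde\F_2$-leaves, or more simply a local product trivialization, realizes $\F_2$ as $\widetilde\F_2\times\{\text{pts}\}$). For Step~1 one must check the $G_1$-orbits and the $\widetilde\F_2$-leaves together foliate $M$ as a product locally, which follows because $\F$ is $G_1$-invariant and its leaves are $\pi_1$-saturated, so $\F=\pi_1^{-1}\widetilde\F_2$ as a partition and the product trivialization of the principal bundle is automatically foliated. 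I would also record that a strongly tense / CERF structure on each foliation is exactly what lets Proposition~\ref{prop:thickasabrick} apply, which is why the CERF hypothesis on all three foliations is included in the statement.
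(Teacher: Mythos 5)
Your proposal is correct and follows essentially the same route as the paper: the paper also views the principal bundle $\pi\colon (M,\F')\to (M/G_1,\widetilde{\F}_2)$ for $\F'=\F$ or $\F_2$ as a thick foliated bundle whose compact fibre $G_1$ carries the one-leaf or the pointwise foliation respectively, both taut, and applies Proposition~\ref{prop:thickasabrick}. You are merely more explicit than the paper about checking that the trivializations are foliated and about the injectivity of $\pi^*$ on $H^1$ of basic forms needed for the converse implications (for a possibly disconnected compact $G_1$ this injectivity is best justified by averaging a primitive over $G_1$ rather than by connectedness of the fibres).
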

\begin{proof}

Consider $\F'=\F$ or $\F_2$.
Then the principal bundle $\pi\colon (M,\F')\longrightarrow (M/G_1,\widetilde{\F}_2)$ is a thick foliated bundle whose compact fibre $G_1$ carries either the one leaf foliation or the pointwise foliation, respectively. As both foliations on $G_1$ are taut, the hypothesis of Proposition~\ref{prop:thickasabrick} are satisfied and the equivalence holds.\qedhere
\end{proof}

\section{Local structure of SRFs}\label{sec:local structure}

Let $\K$ be an SRF on the compact manifold $X$. In this section we prove that the \'Alvarez class of a stratum of $\K$ induces the \'Alvarez class of its corresponding sphere bundle, which will be a key step to extend the \'Alvarez class to the whole manifold $X$ in the next section.

\medskip

We first show that the regular part of the sphere bundle of a singular stratum is a CERF. Recall the quasi-isomorphism notation used in \ref{subsec:CERF} (c).

\begin{lemma}\label{lem:tariscos}
	Let $(Y,\mathcal{K})$ be an SRF with a foliated Thom--Mather system. Consider $S$ and $S'$ two singular strata and take $\varepsilon\in(0,\small{\frac{1}{2}})$. Consider $M=D_S\cap R$ the regular part of the sphere bundle $D_S$,
	 and the subset $M_{\varepsilon}=M\backslash \rho_{S'}^{-1}([0,\varepsilon])$. Then  we have $M_{\varepsilon}\qi M$. 
\end{lemma}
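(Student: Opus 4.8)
The plan is to exhibit a deformation retraction of $M$ onto $M_{\varepsilon}$ that is foliated (leaf-preserving), so that it induces an isomorphism in basic cohomology. The natural candidate for such a retraction is the flow-type map obtained from the contraction $H_{S'}$ associated with the foliated tubular neighbourhood $\mathcal{T}_{S'}$, pushed (via the explosion diffeomorphism $\mathscr{L}_{S'}$) so as to expand the radial coordinate $\rho_{S'}$ from values in $[0,\varepsilon]$ out past $\varepsilon$ while fixing everything already outside $\rho_{S'}^{-1}([0,\varepsilon])$. Concretely, I would use the identification of $D_{S'}\cap T_S$ with a product coming from \eqref{eq:exploisom} and \eqref{eq:explosion}, together with the compatibility conditions (TM2), (TM3), (TM4) of the Thom--Mather system, which guarantee that $\rho_S$ is invariant under the $H_{S'}$-contraction and that $\rho_{S'}$ transforms in the expected homothetic way. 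The key point is that $H_{S'}$ is a foliated map (this is built into the definition of foliated tubular neighbourhood), so the retraction is foliated, and (TM4) ensures it does not move points out of $M=D_S\cap R$ (it preserves $\rho_S=1/2$ and stays in the regular part $R$, since $R$ is saturated and the map is foliated).

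The steps, in order, would be: (1) Set $W=M\cap T_{S'}=\rho_S^{-1}(1/2)\cap T_{S'}\cap R$; by Remark~\ref{rem:notrelated} and (TM1) this is nonempty iff $S,S'$ are comparable, and on $M\setminus T_{S'}$ there is nothing to do since that set already lies in $M_{\varepsilon}$. (2) Using \eqref{eq:exploisom} write $T_{S'}\cap T_S\cong T_{S''}\times(0,1)$ with $S''=D_{S'}\cap S$, so that on $W$ the function $\rho_{S'}$ corresponds, via $\mathscr{L}_{S'}(x,r)=H_{S'}(x,2r)$, to (a multiple of) the parameter $r$ — more precisely $\rho_{S'}(\mathscr{L}_{S'}(x,r)) = r\,\rho_{S'}(x)$ for $x\in D_{S'}$, so the level $\rho_{S'}\le\varepsilon$ is a ``collar'' in the $r$-direction. (3) Choose a smooth function $\lambda\colon[0,1)\to[0,1)$ with $\lambda\equiv$ (suitable stretching) on $[0,\varepsilon]$ and $\lambda=\mathrm{id}$ near and beyond $\varepsilon$, and define $F\colon M\times[0,1]\to M$ by applying $(H_{S'})$ with the $r$-coordinate linearly interpolated between its original value and its $\lambda$-image, extended by the identity off $T_{S'}$. (4) Check $F_0=\mathrm{id}_M$, $F_1(M)\subseteq M_{\varepsilon}$, $F_t|_{M_{\varepsilon}}=\mathrm{id}$ for all $t$ (using that $\lambda$ fixes the region $\rho_{S'}\ge\varepsilon$), that $F$ is smooth (the two definitions glue because near $\partial T_{S'}$ we have $\rho_{S'}$ large, where $\lambda=\mathrm{id}$ and $F$ is the identity), and that each $F_t$ is a foliated map (this is where $H_{S'}$ foliated and $\mathscr{L}_{S'}$ foliated enter). (5) Conclude that $M_{\varepsilon}\hookrightarrow M$ is a foliated deformation retract, hence induces an isomorphism $H^*(M/\mathcal{K})\cong H^*(M_{\varepsilon}/\mathcal{K})$ — a foliated homotopy equivalence induces an isomorphism on basic cohomology — which is exactly $M_{\varepsilon}\qi M$.

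The main obstacle I anticipate is verifying that the retraction stays inside $M=D_S\cap R$ and is globally well defined and smooth: one must be sure that contracting in the $\rho_{S'}$-direction does not change $\rho_S$ (so that we remain on the core $\rho_S^{-1}(1/2)$), which is precisely the content of the first part of (TM4) proved in Proposition~\ref{proposition:tomate}; and one must check the compatibility on the overlap where $T_{S'}$ meets the complement of $T_{S'}$, i.e.\ that the ``stretch'' $\lambda$ can be chosen supported well inside $T_{S'}$ so the gluing with the identity is seamless. A secondary technical point is the interaction with a \emph{third} stratum: if $M$ meets $T_{S''}$ for some other $S''$, the map $F$ built from $H_{S'}$ must still be foliated and must not spoil things there — but since $F_t$ is a foliated diffeomorphism onto its image and the basic-cohomology statement only needs a foliated homotopy equivalence, no further compatibility between distinct tubes is actually required for this particular lemma; (TM1)–(TM4) are invoked only to control $\rho_S$ and $\rho_{S'}$ on $T_S\cap T_{S'}$.
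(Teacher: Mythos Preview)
Your approach shares the paper's geometric core: both rest on the foliated identification $M\cap T_{S'}\cong (D_S\cap D_{S'}\cap R)\times(0,1)$ coming from $\mathscr{L}_{S'}$ together with (TM4), which guarantees that the $H_{S'}$-flow preserves the level $\rho_S=1/2$ and hence keeps you inside $D_S$. The paper records this as equation~\eqref{eq:claimexplo}. Where you diverge is in how the quasi-isomorphism is extracted. You attempt a foliated strong deformation retraction; the paper instead uses Mayer--Vietoris for the open cover $M=M_\varepsilon\cup V$ with $V=M\cap T_{S'}$. The product structure immediately gives foliated diffeomorphisms $M_\varepsilon\cap V\cong N\times(\varepsilon,1)\cong N\times(0,1)\cong V$, so the inclusion $M_\varepsilon\cap V\hookrightarrow V$ is a quasi-isomorphism, and the long exact sequence then forces $M_\varepsilon\qi M$. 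This route avoids constructing any global map at all.

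There is a small but genuine gap in your steps (3)--(4): you cannot choose a smooth (or even continuous) $\lambda$ that is the identity on $[\varepsilon,1)$ \emph{and} sends $(0,\varepsilon]$ into $(\varepsilon,1)$, since continuity at $r=\varepsilon$ forces $\lambda(\varepsilon)=\varepsilon$. Consequently no strong deformation retraction onto the \emph{open} set $M_\varepsilon$ with $F_t|_{M_\varepsilon}=\mathrm{id}$ and $F_1(M)\subseteq M_\varepsilon$ exists. The repair is easy: drop ``strong'' and instead take a diffeomorphism $\lambda\colon(0,1)\to(\varepsilon,1)$ equal to the identity on $[2/3,1)$; the induced map extends by the identity off $T_{S'}$ to a foliated diffeomorphism $\Phi\colon M\to M_\varepsilon$, and $\iota\circ\Phi$ is foliated-isotopic to $\mathrm{id}_M$ through maps of the same form, so $\iota^*$ is an isomorphism. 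Once patched this way your argument is correct and more geometric than the paper's, but the Mayer--Vietoris approach is cleaner precisely because it never needs to glue a radial stretch to the identity.
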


\begin{proof}
	First notice that if $S=S'$, then $M\subset D_S\subset \rho_{S}^{-1}(\small{\frac{1}{2}})$, and  hence, $M=M_{\epsilon}$. If $S$ and $S'$ are not comparable, then by Remark~\ref{rem:notrelated} we also have $M=M_{\epsilon}$. So  in both cases, the lemma follows trivially. 
	
	Suppose that either $S\prec S'$ or $S'\prec S$ holds. The restriction of the map  \eqref{eq:explosion} \begin{equation*}
	{\mathscr{L}}_{S'}\colon ({D}_{S'}\times (0,1),\K \times \I) \to (({T}_{S'}\backslash S'),\K),
	\end{equation*}
	to the regular part is the foliated diffeomorphism:
	\begin{equation}\label{eq:exploenlematec}
	{\mathscr{L}}_{S'}\colon ({D}_{S'}\cap R\times (0,1),\K \times \I) \longrightarrow ((T_{S'}\cap R),\K).
	\end{equation}
	We now prove the following identity by double inclusion:  
	\begin{equation}\label{eq:claimexplo}
	{\mathscr{L}}_{S'}(D_S\cap D_{S'}\cap R\times(0,1))=D_S\cap T_{S'}\cap R.
	\end{equation}
	For the  ``$\subseteq$'' part, take $x\in D_S\cap D_{S'}\cap R$ and $t\in(0,1)$. Then  
	\begin{equation}\label{eq:uso1deTM4}
	\rho_S(\mathscr{L}_{S'}(x,t))=\rho_S(H_{S'}(x,2t))\stackrel{(TM4)}{=}\rho_S(x)=1/2,
	\end{equation}
	which implies $\mathscr{L}_{S'}(x,t)\in D_S$, and by \eqref{eq:exploenlematec}, we are done. For the reciprocal, take $x\in D_S\cap T_{S'}\cap R$. As $x\in T_{S'}$, there exists $(y,t)\in D_{S'}\times(0,1)$ such that 
	\begin{equation}\label{eq:uso2deTM4}
	x=H_{S'}(y,2t)\Rightarrow 1/2=\rho_S(x)=\rho_S(H_{S'}(y,2t))\stackrel{(TM4)}{=}\rho_S(y),
	\end{equation}
	that is, $y\in D_S$, which proves the ``$\supseteq$'' part.
	
	We now define the covering $M=M_{\varepsilon}\cup V$, where $V=M\cap T_{S'}$. We have the following chain of foliated diffeomorphisms:
	\begin{equation*}
	\begin{split}
	M_{\varepsilon}\cap V 
	&= D_S\cap R\cap T_{S'}\cap\rho_{S'}^{-1}((\varepsilon,1))
	\stackrel{\eqref{eq:claimexplo}}{=} 
	\mathscr{L}_{S'}(D_S\cap D_{S'}\cap R\times(\varepsilon,1))\\
	&\cong D_S\cap D_{S'}\cap R\times(\varepsilon,1) 
	\cong D_S\cap D_{S'}\cap R\times(0,1)
	\stackrel{\eqref{eq:claimexplo}}{\cong} D_S\cap R\cap T_{S'}=V,
	\end{split}
	\end{equation*}
	and thus the Mayer-Vietoris sequence for basic cohomology yields $M_{\varepsilon}\qi M$.
\end{proof}
\begin{remark}
	Notice that we need both parts of (TM4) to prove \eqref{eq:uso1deTM4} and \eqref{eq:uso2deTM4} because we are considering both cases ($S\prec S'$ and $S'\prec S$).
\end{remark}

We now fix some notation for the rest of this article. For each $i \in \Z$  we shall write:
\begin{itemize}
    \item $\Sigma_{i} =\displaystyle{\bigcup_{\mathclap{\depth_{\K} S\leq i }}}S$
\end{itemize}
and, for $i \in  \{0, \ldots , r-1 \}$,  where $r= \depth \SK$,
\begin{itemize}
\item $T_i=\displaystyle{\bigcup_{\mathclap{S\subset \Sigma_{i}\backslash \Sigma_{i-1} }}}T_S$
\item $\tau_{i} \colon T_{i} \rightarrow  \Sigma_{i}\backslash \Sigma_{i-1}$;
\item $\rho_{i} \colon T_{i}\to [0,1)$ its radius function, and
\item $D_{i} = \rho_i^{-1}(\small{\frac{1}{2}})$ the core of $T_{i}$.	
\end{itemize}

Notice that both $T_i$ and $D_i$ have a finite number of connected components. The proof of the next proposition is similar to that of \cite[Proposition 2.4]{tope2}:

\begin{proposition}\label{prop:TSR}
	Let $S$ be a singular stratum of $\K$, denote by $T_S$ its tubular neighbourhood in $\mathcal{T}$ and $D_S$ its corresponding core. Then  the restrictions of $\K$ to both $T_S\cap R$ and $D_S\cap R$ are CERFs.
\end{proposition}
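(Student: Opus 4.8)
The plan is to exhibit, for each of the two regular foliations $(T_S\cap R,\K)$ and $(D_S\cap R,\K)$, a zipper and a reppiz in the sense of \ref{subsec:CERF}, following the scheme of \cite[Proposition~2.4]{tope2}; the whole argument runs by induction on $\depth\SK$. First I would dispose of the easy points. Both foliations are genuine Riemannian foliations: $T_S\cap R$ is an open $\K$--saturated subset of the regular stratum $(R,\K_R)$, and $D_S\cap R$ coincides with the regular stratum of the SRF $(D_S,\K_{D_S})$ (in particular an open saturated subset of it), so in each case the restriction of an adapted metric is bundle--like. Moreover, since $S$ is singular we have $S\cap R=\emptyset$, so the restriction of the explosion map \eqref{eq:explosion} to regular parts is a foliated diffeomorphism
$$
\mathscr{L}_S\colon \bigl(D_S\cap R\times(0,1),\ \K\times\I\bigr)\ \cong\ \bigl(T_S\cap R,\ \K\bigr).
$$
This reduces everything to $D_S\cap R$: from a zipper $N$ of $D_S\cap R$ one builds the zipper $N\times\S^1$ of $T_S\cap R$ (embed $(0,1)$ as an arc in $\S^1$ and take the product of the Riemannian foliation of $N$ with the pointwise foliation of $\S^1$), and from a reppiz $U$ one builds the reppiz $U\times(\tfrac13,\tfrac23)$ of $T_S\cap R$, the needed quasi--isomorphism coming from the invariance of basic cohomology under a pointwise--foliated interval factor (the same kind of Mayer--Vietoris computation used in Lemma~\ref{lem:tariscos}).

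For the zipper of $D_S\cap R$ I would use that $\depth\SKDS=\depth\SKTS-1<\depth\SK$ together with the structure of $D_S$: its sphere bundle $D_S\to S$ restricts over regular parts to a thick foliated bundle with compact structure group $\OO(n_S+1,\G_S)$ (Lemma~\ref{lema:struct}) and fibre the regular part of the model SRF $\G_S$ on the sphere $\S^{n_S}$, which is a CERF by \cite[Proposition~2.4]{tope2} applied to the regular stratum; the base $(S,\K_S)$ is a CERF by the same result. Restricting this bundle over a reppiz of $S$ — a quasi--isomorphism by a fibred Mayer--Vietoris argument over the base — confines $D_S\cap R$, up to basic cohomology, to a relatively compact region sitting inside an SRF of strictly smaller depth on a compact manifold, and the inductive hypothesis then furnishes a zipper.

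The substantial part is the reppiz of $T_S\cap R$, and here I would follow \cite[Proposition~2.4]{tope2} closely. One shrinks $T_S\cap R$ in three ways: by restricting over a reppiz $U_S$ of the base stratum $S$, by cutting the fibre direction down to $\rho_S^{-1}\bigl((\delta,1-\delta)\bigr)$, and — the delicate step — by deleting, one at a time and in order of increasing depth, small closed sub--tubes $\rho_{S'}^{-1}([0,\varepsilon_{S'}])$ around the finitely many singular strata $S'$ that meet $T_S$ (equivalently, that are comparable to $S$). Each deletion must be shown to preserve basic cohomology; this is precisely the Mayer--Vietoris argument of Lemma~\ref{lem:tariscos}, where the overlap is identified through the explosion map $\mathscr{L}_{S'}$ with a product $(\,\cdot\,)\times(\varepsilon_{S'},1)$ that deformation retracts onto $(\,\cdot\,)\times(0,1)$ — and this is the reason both halves of (TM4) are needed, since $S'$ may lie above or below $S$. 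After finitely many steps the resulting saturated open set has closure contained in a compact annulus bundle over the compact set $\overline{U_S}$ from which open neighbourhoods of all lower strata have been removed; it is therefore relatively compact in $T_S\cap R$ and quasi--isomorphic to it, i.e.\ a reppiz, which I then transport through $\mathscr{L}_S$ to get a reppiz of $D_S\cap R$.

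The main obstacle I anticipate is exactly this last construction: arranging the successive tube--removals together with the restriction over $\overline{U_S}$ and the cut $\rho_S\in(\delta,1-\delta)$ so that the final set is genuinely relatively compact \emph{inside} $T_S\cap R$ — which is what fails for the naive choices, because $S$ need not be compact and the tube $T_S$ has an open outer boundary as $\rho_S\to1$ — while simultaneously checking that no step alters basic cohomology. This bookkeeping is carried out in \cite[Proposition~2.4]{tope2}, and Lemma~\ref{lem:tariscos} is the representative local input for it; the rest of the proof is formal once that ingredient is in place.
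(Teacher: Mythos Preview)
Your overall framework (reduce via $\mathscr{L}_S$, then exhibit a zipper and a reppiz) and your identification of Lemma~\ref{lem:tariscos} as the engine for the reppiz are correct. But your zipper construction is both unnecessary and, as written, not a construction of a zipper at all: a zipper of $D_S\cap R$ must be a \emph{compact manifold with an RF} containing $D_S\cap R$ as a saturated open subset, whereas your bundle/induction paragraph only ``confines $D_S\cap R$, up to basic cohomology, to a relatively compact region'' --- that is reppiz language. The paper disposes of the zipper in one line: $D_S\cap R$ (and equally $T_S\cap R$) is a saturated open subset of the regular stratum $R$, and $(R,\K_R)$ is already known to be a CERF by \cite[Proposition~2.4]{tope2}; any zipper of $R$ is therefore a zipper of $D_S\cap R$. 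No induction on depth, no bundle structure, no $N\times\S^1$.

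For the reppiz the paper works directly with $M=D_S\cap R$ rather than $T_S\cap R$, which eliminates two of your three shrinking operations. There is no $\rho_S$--cut because $D_S=\rho_S^{-1}(1/2)$ already lives on a single level, and there is no restriction over a reppiz $U_S$ of $S$: removing the tubes $\rho_{S'}^{-1}([0,\varepsilon])$ around \emph{all} singular strata (those below $S$ included) simultaneously achieves the quasi--isomorphism via Lemma~\ref{lem:tariscos} and the relative compactness, the latter because $\overline{D_S}\setminus D_S\subset\Sigma_{s-1}$ with $s=\depth_\K S$, so that $\overline{M}\setminus M\subset\Sigma_{r-1}$ is swallowed by the removed tubes. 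Your extra steps are not wrong, but they introduce bookkeeping you then correctly flag as ``the main obstacle''; the paper's route avoids that obstacle entirely. (There is also a directional muddle in your write--up: you announce a reduction to $D_S\cap R$, then build the reppiz for $T_S\cap R$, then transport it back.)
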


\begin{proof}
	The foliated diffeomorphism \eqref{eq:exploenlematec} implies that it suffices to show that $( D_S \cap R,\K)$ is a CERF to prove the proposition. Notice that any zipper of the CERF $(R,\K)$ is a zipper for $( D_S \cap R,\K)$, which yields property (a) of \ref{subsec:CERF}. So  it suffices to construct a reppiz of $M=D_S\cap R$ by removing a small neighbourhood of each singular stratum.
	
	Take $\varepsilon\in(0,\small{\frac{1}{2}})$ and consider $T_i^{\varepsilon}={\displaystyle \bigcup_{j=0}^{i}\rho_{j}^{-1}([0,\varepsilon])}$ for all $i=0,\dots,r-1$.
	Notice that $X\backslash R=\Sigma_{r-1}\subset T_{r-1}^{\varepsilon}$. We  now show that $U=M\backslash T_{r-1}^{\varepsilon}$ 
	is a reppiz of  $M$, i.e., $U$ satisfies properties (b) and (c) of \ref{subsec:CERF}. 
	\medskip

	We define, for $i=0,\dots,r-1$ :
	\begin{itemize}
		\item the subset $U_i=M\backslash T_i^{\varepsilon}$;
		\item the space $Y_i=X\backslash T_i^{\varepsilon}$;
		\item the collection $\mathcal{T}_i=\Set{ T\backslash T_i^{\varepsilon} | T\in\mathcal{T} }$;
		\item the statement\newline 
		$
		P_i\equiv$ ``$\mathcal{T}_i$ is a foliated Thom--Mather system of the SRF $(Y_i,\mathcal{K})$,  and  $U_i\qi M$.''
	
	\end{itemize}	
	We now prove $P_{r-1}$ by induction on $i$, which implies \ref{subsec:CERF} (b).

	As $\Sigma_0$ is a union of minimal (closed) strata of $(X,\mathcal{K})$, then $T_0^{\varepsilon}=\rho_0^{-1}([0,\varepsilon])$ is a saturated closed subset of $(X,\mathcal{K})$, and the first part of $P_0$ follows. Now by repeatedly applying Lemma~\ref{lem:tariscos} with $Y=X$ and $S'$ each connected component of $\Sigma_0$, we get the second part, that is, $U_0\qi M$.
	
	Suppose now that $P_{i-1}$ is true, and let's prove $P_i$. We have 
	$$
	Y_i
	=X\backslash T_i^{\varepsilon}
	=Y_{i-1}\backslash\rho_i^{-1}([0,\varepsilon])
	\quad \text{and} \quad 
	\mathcal{T}_i
	=\Set{T\backslash \rho_i^{-1}([0,\varepsilon]) | T\in\mathcal{T}_{i-1}}.
	$$ 
	Notice that, for every $S'\in\Sigma_i$ we have that $S'\backslash T_{i-1}^{\varepsilon}$ is a minimal (hence, closed) stratum of $Y_{i-1}$. So  $\rho_i^{-1}([0,\varepsilon])$ is a saturated closed subset of $(Y_{i-1},\mathcal{K})$ and as a consequence, we get the first part of $P_i$. By repeatedly applying Lemma~\ref{lem:tariscos} with $Y=Y_{i-1}$ and $S'$ each connected component of $\Sigma_i$, we get $U_i\qi U_{i-1}$. By induction hypothesis, we have $U_{i-1}\qi M$, and hence, $U_i\qi M$ holds. So  we get $P_i$ and the induction proof is completed. 
	
	We have proven $P_{r-1}$, which implies that $U=U_{r-1}$ satisfies \ref{subsec:CERF} (b).
	
	\medskip
	
	We now have to prove that the closure of $U$ in $M$ is compact (property (c) of \ref{subsec:CERF}). 
	
	Consider the union of open tubes $T_{\varepsilon}=
	\displaystyle{\bigcup_{i=0}^{r-1} (T_i^{\varepsilon})^{\circ}}=
	{\displaystyle \bigcup_{i=0}^{r-1}\rho_{i}^{-1}([0,\varepsilon))}$, where $(\cdot)^{\circ}$ stands for the interior operator. We define $K=M\backslash T_{\varepsilon}$, which is a subset of $M$ containing $U$. Let's compute its closure in $X$: 
	$$
	\overline{K}
	\subset \overline{M}\backslash (T_{\varepsilon})^{\circ} 
	= \overline{M}\backslash T_{\varepsilon}
	\stackrel{(\star)}{=} M\backslash T_{\varepsilon} = K.
	$$
	So  $K$ is closed in $X$, and thus, compact. Hence, $U$ is compact, and \ref{subsec:CERF} (b) is satisfied. To justify step $(\star)$, it suffices to see  $\overline{(D_S\cap R)}\backslash (D_S\cap R) \subset \Sigma_{r-1}$, because in that case,
	$\overline{M}\backslash M  \subset T_{\varepsilon}.$
	By construction, the core $D_S$ is a closed subset of
	$X \backslash \Sigma_{s-1}$, being $s=\depth_{\K} S$. So  we get $\overline{D_S}\backslash
	D_S
	\subset \Sigma_{s-1}$ and therefore:
	$$
	\overline{D_S \cap R}\backslash (  D_S \cap R)
	\subset
	\overline{ D_S} \backslash (  D_S \cap R)
	=
	\overline{D_S}  \backslash D_S
	\cup
	\overline{D_S}  \backslash R
	 \subset \Sigma_{s-1} \cup X\backslash R 
	 =\Sigma_{r-1},
	$$
	which ends the proof.\qedhere
	\end{proof}

We get that the \'Alvarez class of a singular stratum induces that of its sphere bundle.

\begin{proposition}\label{prop:depth1}
Let $S$ be a minimal stratum of an SRF $(X,\mathcal{K})$, and let $\pi\colon D_S\longrightarrow S$ be its associated sphere bundle. Suppose that the foliation of the fibre is taut.
Then  the \'Alvarez classes of $\mathcal{K}_{D_S\cap R}$ and $\mathcal{K}_{S}$ satisfy $\kappab_{D_S\cap R}=\pi^*\kappab_S$.
\end{proposition}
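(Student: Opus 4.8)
The plan is to recognize $\pi\colon(D_S\cap R,\K)\to(S,\K_S)$ as a thick foliated bundle of CERFs with taut fibre and compact structure group, and then apply Proposition~\ref{prop:thickasabrick} verbatim; the bulk of the work is checking that each of the four hypotheses of that proposition holds.

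First I would set up the bundle structure. As noted in the Example following the definition of thick foliated bundle in Section~4, the restriction of the sphere bundle $\mathcal{D}_S$ to the regular part $E=D_S\cap R$ is a thick foliated bundle over $B=S$, with $\fE=\K_{D_S\cap R}$, $\fB=\K_S$, and typical fibre the regular part of the model SRF $(\S^{n_S},\mathcal{G}_S)$; by hypothesis this fibre is taut. The one point that needs a little care here is the identification of $D_S\cap R$ as a genuine subbundle with that fibre: a foliated chart $\varphi_i\colon(\tau_S^{-1}(U_i),\K)\to(U_i\times\S^{n_S},\K_S\times\mathcal{G}_S)$ carries leaves to products of leaves, so the dimension of the $\K$-leaf through $(b,v)$ equals $\dim\K_S+\dim(\mathcal{G}_S\text{-leaf through }v)$; since $\dim\K_S$ is constant on $S$, this is maximal (equal to $\dim R$) exactly when $v$ lies in the regular stratum of $(\S^{n_S},\mathcal{G}_S)$. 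Hence each $\varphi_i$ restricts to a foliated trivialization of $D_S\cap R$ over $U_i$, with fibre the regular part of $(\S^{n_S},\mathcal{G}_S)$.

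Next I would pin down the two structural hypotheses. For the compact structure group: Lemma~\ref{lema:struct} gives an atlas of $\mathcal{D}_S$ with cocycle in the compact Lie group $\OO(n_S+1,\mathcal{G}_S)$; since every element of this group preserves $\mathcal{G}_S$, it preserves the regular stratum of $(\S^{n_S},\mathcal{G}_S)$, so the same cocycle is a compact structure group for the restricted thick foliated bundle $D_S\cap R\to S$. For the CERF hypotheses: $(D_S\cap R,\K)$ is a CERF by Proposition~\ref{prop:TSR}, while $(S,\K_S)$ is a CERF trivially, because a minimal stratum $S$ is the only kind that is closed in $X$, hence compact, so it is its own zipper and reppiz (and $\kappab_{\K_S}$ is just the usual \'Alvarez class of the Riemannian foliation $\K_S$ on the closed manifold $S$).

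With all four hypotheses of Proposition~\ref{prop:thickasabrick} verified — thick foliated bundle of CERFs, taut fibre, compact structure group — that proposition gives $\kappab_{\fE}=\pi^*\kappab_{\fB}$, that is, $\kappab_{D_S\cap R}=\pi^*\kappab_S$, which is exactly the claim. I do not expect a genuine obstacle: the argument reduces to bookkeeping of hypotheses, and the only mildly delicate step is the local identification of the regular part of $D_S$ as a fibre subbundle whose fibre is the regular part of the sphere foliation, which follows directly from the product form of Molino's foliated charts for a tubular neighbourhood.
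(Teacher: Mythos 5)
Your proposal is correct and follows essentially the same route as the paper: the paper's proof likewise cites Proposition~\ref{prop:TSR} for the CERF property of $\K_{D_S\cap R}$, Lemma~\ref{lema:struct} (together with the Example in Section~4) for the thick foliated bundle structure with compact structure group and taut fibre, and then applies Proposition~\ref{prop:thickasabrick}. Your extra checks — that the regular part of $D_S$ is a subbundle with fibre the regular stratum of $(\S^{n_S},\mathcal{G}_S)$, and that the compact minimal stratum $S$ is trivially a CERF — are just explicit verifications of facts the paper uses implicitly.
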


\begin{proof}
From Proposition~\ref{prop:TSR}, $\mathcal{K}_{D_S\cap R}$ is a CERF. 
From Lemma~\ref{lema:struct}, $\pi$ is a thick foliated bundle with compact structure group and taut fibre.
Proposition~\ref{prop:thickasabrick} gives the result.
\end{proof}

\section{Tautness of Singular Riemannian Foliations}

Let $\K$ be an SRF on the compact connected manifold $X$. In Proposition \ref{prop:depth1} we proved that the Álvarez class of each stratum is related to that of the regular part of a tube around it. This resembles a lot Verona's approach to differential forms on stratified spaces, cf. \cite{verona}.
In this section we shall follow that approach to patch up the Álvarez classes of all the strata into a unique cohomology class.

Notice that the definition of the basic forms (see Section \ref{subsec:CERF}) in the context of a regular foliation makes sense when the foliation is singular. We shall thus use the same notation and denote the {\em basic cohomology} of $(X,\K)$ by $H^*(X/\K)$. The existence of basic partitions of unity  implies the existence of Mayer-Vietoris sequences for the basic cohomology of open $\K$-saturated subsets of $X$ (see \cite[Lemma 3]{wolak-subalgebras}).

\begin{remark}\label{remark:inyective}
By degree reasons, the inclusion $\Omega^1(X/\K)\hookrightarrow \Omega^1(X)$ induces a monomorphism in cohomology
$H^1(X/\K)\rightarrowtail H^1(X)$.
\end{remark}

The following example will be used in the proof of the main theorem:

\begin{lemma}\label{lema:esfera}
Let $\G$ be an SRF on the sphere $\S^{k}$ without 0-dimensional leaves. Then  $H^1(\S^{k}/\G)=0$.
\end{lemma}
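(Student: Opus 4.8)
The goal is to show that a singular Riemannian foliation $\G$ on a sphere $\S^k$ with no $0$-dimensional leaves has trivial first basic cohomology. The natural strategy is an induction on the depth of $\SG$, using the local structure of an SRF near a minimal stratum together with Mayer--Vietoris for basic cohomology. If $\depth \SG = 0$, then $\G$ is a regular Riemannian foliation on $\S^k$; since $\S^k$ is simply connected for $k\ge 2$ and $H^1(\S^k/\G)\rightarrowtail H^1(\S^k)=0$ by Remark~\ref{remark:inyective}, we get the claim; the case $k=1$ forces $\G$ to have a $0$-dimensional leaf unless it is the one-leaf foliation, whose basic cohomology is that of the point-quotient, again with vanishing $H^1$ (one must rule out the pointwise foliation on $\S^1$, which is precisely what ``no $0$-dimensional leaves'' does).

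\textbf{Inductive step.} Suppose $\depth \SG = r \ge 1$ and the result holds for all SRFs of smaller depth on spheres with no $0$-dimensional leaves. Let $S$ be a minimal (hence closed) stratum of $\G$; it cannot be a point by hypothesis, and it cannot be all of $\S^k$ since then $\G$ would be the one-leaf foliation of depth $0$. Take a foliated tubular neighbourhood $T_S$ with core $D_S$ and consider the open cover $\S^k = (\S^k\setminus S)\cup T_S$. The first piece $\S^k\setminus S$ is an open saturated subset on which $\G$ restricts to an SRF; the second piece $T_S$ retracts onto $S$ in a foliated way (via the contraction $H_S$), so $H^*(T_S/\G)\cong H^*(S/\G_S)$, which vanishes in degree $1$ because $\G_S$ is a regular Riemannian foliation on the compact simply connected (for $\dim S\ge 2$) manifold $S$, invoking again Remark~\ref{remark:inyective}; the low-dimensional cases for $S$ must be handled as above. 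The overlap $(\S^k\setminus S)\cap T_S = T_S\setminus S$ is foliated-diffeomorphic to $D_S\times(0,1)$ by the explosion map~\eqref{eq:explosion}, so its basic cohomology is $H^*(D_S/\G_{D_S})$. Now $D_S$ is the total space of the sphere bundle $\mathcal D_S$ over $S$ with fibre $\S^{n_S}$ carrying the SRF $\G_S'$ (the $\G_S$ of Lemma~\ref{lema:struct}), which has \emph{no $0$-dimensional leaves} and satisfies $\depth \SG_{D_S} = \depth \SG_{T_S} - 1 < r$.

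\textbf{Reducing the overlap term.} The remaining point is to compute $H^1(D_S/\G_{D_S})$ and see it vanishes, which does not follow directly from the induction hypothesis since $D_S$ is not a sphere but a sphere \emph{bundle}. Here I would use that $\mathcal D_S$ is a thick foliated bundle with compact structure group $\OO(n_S+1,\G_S)$ (Lemma~\ref{lema:struct}) and fibre $(\S^{n_S},\G_S)$ of strictly smaller depth with no $0$-dimensional leaves; over a contractible trivialising open set the basic cohomology splits, and a Mayer--Vietoris / spectral-sequence argument over a finite good cover of $S$ reduces the computation of $H^1(D_S/\G_{D_S})$ to $H^1(S/\G_S)$ and $H^0$ of the fibrewise basic cohomology $H^0(\S^{n_S}/\G_S)$ against $H^1(S/\G_S)$, both controlled: $H^1(S/\G_S)=0$ as above, and the fibre term $H^1(\S^{n_S}/\G_S)=0$ by the induction hypothesis. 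Thus $H^1(D_S/\G_{D_S})=0$, and the Mayer--Vietoris sequence
\begin{equation*}
0 \to H^1(\S^k/\G) \to H^1((\S^k\setminus S)/\G)\oplus H^1(T_S/\G) \to H^1((T_S\setminus S)/\G)
\end{equation*}
together with $H^1(T_S/\G)=0$ and $H^1((T_S\setminus S)/\G)=H^1(D_S/\G_{D_S})=0$ shows that $H^1(\S^k/\G)$ injects into $H^1((\S^k\setminus S)/\G)$. Finally $\S^k\setminus S$ deformation retracts (non-foliatedly, but that is enough for ordinary cohomology, which dominates basic cohomology in degree $1$ via Remark~\ref{remark:inyective}) and one argues $H^1((\S^k\setminus S)/\G)\rightarrowtail H^1(\S^k\setminus S)$; since $S$ has codimension $n_S+1\ge 2$ in $\S^k$, removing it does not create $H^1$, so $H^1(\S^k\setminus S)=0$ and the proof closes.

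\textbf{Main obstacle.} The delicate part is the middle step: extracting $H^1(D_S/\G_{D_S})=0$ from the fact that $D_S$ is a \emph{bundle} over $S$ with small-depth SRF fibre rather than a sphere. I expect the cleanest route is to set up the basic Mayer--Vietoris (or the associated spectral sequence of the thick foliated bundle) over a finite trivialising cover of $S$, so that all terms are products $U_\alpha\times \S^{n_S}$ whose basic cohomology is $H^*(U_\alpha)\otimes H^*(\S^{n_S}/\G_S)$; the induction hypothesis kills the fibre's $H^1$ and the simple-connectivity of $S$ (with care for $\dim S\le 1$) kills the base's $H^1$, leaving nothing in total degree $1$. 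Keeping track of the low-dimensional edge cases ($k=1$, $\dim S\le 1$, $n_S=0$ — the last excluded precisely by the ``no $0$-dimensional leaves'' hypothesis on the fibre) is the bookkeeping one must not skip.
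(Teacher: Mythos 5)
Your proposal is built around an induction on depth that the statement does not need, and its closing steps contain genuine errors. The key observation is that Remark~\ref{remark:inyective} is stated (and true) for an \emph{arbitrary} SRF: if a closed basic $1$-form is exact in $\Omega^*(X)$, any primitive $f$ satisfies $i_Xdf=0$ for leaf-tangent $X$, so $f$ is itself basic; hence $H^1(\S^k/\G)\rightarrowtail H^1(\S^k)$ with no regularity assumption. For $k\neq 1$ this kills $H^1(\S^k/\G)$ immediately, and for $k=1$ the absence of $0$-dimensional leaves forces the one-leaf foliation, whose basic $1$-forms vanish. That two-line argument is the paper's proof; you invoke exactly it in your depth-$0$ case but do not notice it applies verbatim in the singular case, and instead launch the Mayer--Vietoris induction.

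Beyond being unnecessary, the induction does not close. First, your final step claims $H^1(\S^k\setminus S)=0$ because $S$ has codimension $n_S+1\ge 2$; this fails when the codimension is exactly $2$: a circle stratum in $\S^3$ has complement homotopy equivalent to a solid torus, so $H^1(\S^3\setminus S)\cong\R$. (Note also that $\S^k\setminus S$ is not a sphere, so your induction hypothesis cannot be applied to it; the injection into ordinary cohomology was your only tool there, and it is not enough.) Second, you assert that a minimal stratum $S$ with $\dim S\ge 2$ is simply connected in order to get $H^1(T_S/\G)\cong H^1(S/\G_S)=0$; strata of an SRF on a sphere need not be simply connected (they can be tori carrying foliations with $H^1(S/\G_S)\neq 0$), so this step is unjustified. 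Third, the computation of $H^1(D_S/\G_{D_S})$ via a Künneth/spectral-sequence argument for the thick foliated bundle is only sketched, and the local terms are $H^*(U_\alpha/\G_S)\otimes H^*(\S^{n_S}/\G_{\S})$, not $H^*(U_\alpha)\otimes H^*(\S^{n_S}/\G_{\S})$, so contractibility of $U_\alpha$ alone does not control them. All of these difficulties evaporate once one applies Remark~\ref{remark:inyective} directly to $(\S^k,\G)$.
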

\begin{proof}
If $k=1$, since there are no 0-dimensional leaves, $\G$ must be a regular foliation of dimension 1, that is, the one leaf foliation, and the statement holds trivially. If $k\ne 1$, then the lemma follows by Remark~\ref{remark:inyective}, because  $H^1(\S^k)=0$.
\end{proof}

Assume the notation used in Section \ref{sec:local structure}, and put $r=\depth\SK$. We have the following compact saturated subsets of $X$:
$$
\emptyset=\Sigma_{-1}\subset\Sigma_0\subset\cdots\subset\Sigma_{r-1}=X\backslash R\subset\Sigma_r=X.
$$
We also consider the open saturated subsets $R_i=X\backslash\Sigma_{r-i}$ and the inclusions
$$
\emptyset=R_0\subset R=R_1\subset R_2\subset\cdots\subset R_r\subset R_{r+1}=X.
$$ 
\begin{proposition}\label{prop:mono-regular}
Let $\K$ be an SRF on $X$, and denote by $R$ its regular part. Then the inclusion  induces a monomorphism in basic cohomology 
	$$\iota_R\colon H^1(X/\K)\rightarrowtail H^1(R/\K).$$
\end{proposition}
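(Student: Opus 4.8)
The plan is to reduce the statement to ordinary de Rham cohomology and to conclude by a codimension count. First I would record the commutative square
$$
\xymatrix{
H^1(X/\K) \ar[r]^{\iota_R} \ar[d] & H^1(R/\K) \ar[d]\\
H^1(X;\R) \ar[r] & H^1(R;\R),
}
$$
whose vertical arrows are induced by the inclusions of basic forms into all forms and whose bottom arrow is restriction along $R\hookrightarrow X$. The left vertical arrow is a monomorphism by Remark~\ref{remark:inyective}, and the right vertical arrow is a monomorphism by the same one-line argument applied to the regular foliation $\K_R$ on $R$: if a closed $\K$-basic $1$-form equals $df$ for a function $f$ on $R$, then $i_Y df=0$ for $Y$ tangent to the leaves, so $f$ is leafwise constant, hence basic. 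Therefore $\iota_R$ is a monomorphism as soon as the bottom arrow $H^1(X;\R)\to H^1(R;\R)$ is.

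To prove that, I would use that the singular set $X\setminus R=\Sigma_{r-1}$ has codimension at least $2$ in $X$. Indeed, for a singular stratum $S$ the foliated tube $\mathcal{T}_S$ has fibre the disk $\D^{n_S+1}$, so $\codim_X S=n_S+1$, and $n_S\ge 1$ because, by Lemma~\ref{lema:struct} (see also Section~\ref{subsection.struct.group}), the fibre sphere $\S^{n_S}$ carries an SRF with no $0$-dimensional leaves and hence is not a point. Since $(X,\K)$ has finitely many strata, $\Sigma_{r-1}$ is a finite union of locally closed submanifolds of $X$, each of codimension $\ge 2$, and a general position argument applies: deforming $1$-dimensional objects off these submanifolds one depth at a time, starting from the closed stratum $\Sigma_0$ (whose complement is open), every loop of $X$ is freely homotopic in $X$ to a loop in $R$, and every path of $X$ with endpoints in $R$ is homotopic rel endpoints in $X$ to a path in $R$. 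Combining these shows $\pi_1(R,x_0)\to\pi_1(X,x_0)$ is surjective for $x_0\in R$; applying $\Hom(-,\R)$ to the resulting surjection $H_1(R;\Z)\to H_1(X;\Z)$ gives that $H^1(X;\R)\to H^1(R;\R)$ is injective, which completes the argument.

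The only non-formal ingredient is the surjection $\pi_1(R)\twoheadrightarrow\pi_1(X)$, and the step I expect to require the most care is the codimension bound $n_S\ge 1$; once that is secured the transversality is routine. For a proof that stays entirely inside the basic-cohomology formalism of this section, one can instead work along the filtration $R=R_1\subset R_2\subset\cdots\subset R_{r+1}=X$: for each $i\in\{1,\dots,r\}$ write $R_{i+1}=W\cup R_i$, where $W$ is the disjoint union of the foliated tubes $T_S$ of the strata $S$ with $\depth_{\K} S=r-i$; these tubes already lie in $R_{i+1}$, $W$ is foliated-homotopy equivalent to $\Sigma_{r-i}\setminus\Sigma_{r-i-1}$, and $W\cap R_i=W\setminus(\Sigma_{r-i}\setminus\Sigma_{r-i-1})\cong\bigsqcup_S\bigl(D_S\times(0,1)\bigr)$ by the foliated diffeomorphisms of~\eqref{eq:explosion}. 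Under these identifications the restriction $H^1(W/\K)\to H^1((W\cap R_i)/\K)$ becomes $\bigoplus_S\pi_S^*$ with $\pi_S\colon D_S\to S$ the sphere bundle, and each $\pi_S^*$ is injective (same reduction as above: Remark~\ref{remark:inyective} brings it to $H^1(S;\R)\hookrightarrow H^1(D_S;\R)$, which holds since $D_S\to S$ has connected fibre $\S^{n_S}$, whence $\pi_1(D_S)\twoheadrightarrow\pi_1(S)$). A Mayer--Vietoris argument for the cover $R_{i+1}=W\cup R_i$ — gluing a basic primitive on $W$, which exists by the injectivity just noted, to one on $R_i$ after matching constants on the connected components of $W\cap R_i$ — then shows $H^1(R_{i+1}/\K)\to H^1(R_i/\K)$ is injective, and composing over $i$ yields $\iota_R$.
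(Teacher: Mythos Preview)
Your argument is correct. The first route---reducing to ordinary de~Rham cohomology via Remark~\ref{remark:inyective} and then using that $X\setminus R$ has codimension $\ge 2$ to obtain $\pi_1(R)\twoheadrightarrow\pi_1(X)$---is genuinely different from the paper's proof. The paper stays entirely inside basic cohomology: it climbs the filtration $R=R_1\subset\cdots\subset R_{r+1}=X$ and, at each step, runs the Mayer--Vietoris sequence for $R_i=R_{i-1}\cup T$ (with $T$ the union of tubes over strata of depth $r-i+1$), reducing to the injectivity of $\pi^*\colon H^1(S/\K)\to H^1(D_S/\K)$, which it proves by the bare-hands observation that $\pi^*\alpha=df$ forces $f$ to be $\pi$-basic. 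Your second sketch is precisely this argument, the only variant being that you deduce injectivity of $\pi_S^*$ by passing to de~Rham cohomology and invoking connected fibres; the paper's direct form-level argument is slightly leaner (no appeal to $\pi_1$) but ultimately also relies on the fibre $\S^{n_S}$ being connected, i.e.\ on the same bound $n_S\ge 1$ that you isolate. Your topological route has the virtue of making the role of codimension transparent and of giving the result in one shot rather than inductively; the paper's route has the advantage that the same Mayer--Vietoris template is reused verbatim in the proof of Theorem~\ref{theorem:main}, so the machinery is already warmed up there.
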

\begin{proof}
We consider, for $i\in\{1,2,\dots,r+1 \}$ the following statement:
$$
P_i\equiv\text{``The inclusion } R_{i-1}\hookrightarrow R_i \text{ induces a monomorphism } H^1(R_{i}/\K)\rightarrowtail H^1(R_{i-1}/\K) \text{''}.
$$
We shall show that $P_{i}$ holds for every  $i\in\{1,2,\dots,r+1 \}$, thus proving the Proposition. $P_1$ holds trivially. Take, for short, $T=T_{r-i+1}$, which is a tube of the singular strata forming $R_i\backslash R_{i-1}$. From the saturated open covering $\{R_{i-1},T\}$ of $R_i$, we have the Mayer-Vietoris exact sequence for basic cohomology, which begins:
\begin{equation}\label{eq:beginningsequence}
H^0(R_i/\K) \longrightarrow H^0(R_{i-1}/\K)\oplus H^0(T/\K)\longrightarrow H^0((R_{i-1}\cap T)/\K)\longrightarrow\dots 
\end{equation}
By \eqref{eq:beginningsequence},  $H^0(R_i/\K)\cong H^0(R_{i-1}/\K)$ and $H^0(T/\K)\cong H^0((R_{i-1}\cap T)/\K)$, we get the exact sequence:
\begin{equation}\label{eq:propiny-mv}
0\longrightarrow H^1(R_i/\K) \stackrel{\phi}{\longrightarrow} H^1(R_{i-1}/\K)\oplus H^1(T/\K)\stackrel{\rho}{\longrightarrow} H^1((R_{i-1}\cap T)/\K).
\end{equation}
If we prove that $H^1(T/\K){\longrightarrow} H^1((R_{i-1}\cap T)/\K)$ is injective, we get that $\phi$ is injective and thus $P_i$. As $R_{i-1}\cap T=T_{r-i+1}\backslash\Sigma_{r-i+1}$, it suffices to prove that, for each singular stratum $S$, the inclusion $\iota\colon T_S\cap R_{i-1}\hookrightarrow T_S$ induces a monomorphism in basic cohomology. We have
\begin{equation}\label{eq:tubito-morro}
\xymatrix{
	H^1(T_S/\K)\ar[r]^-{\iota^*} & H^1((T_S\cap R_{i-1})/\K)\ar[d]^{\cong}_{\iota_D^*}\\
	H ^1(S/\K)\ar[u]^{\tau^*}_{\cong}\ar[r]^-{\pi*}   & H^1(D_S\cap R_{i-1})/\K),
}
\end{equation}
where $\iota_D$ is induced by the inclusion $D_S\hookrightarrow T_S$ and $\pi=\tau|_{D_S}$. As $\iota_D^*$ is an isomorphism we just have to prove that $\pi^*$ is injective. Take $[\alpha]\in H^1(S/\K)$ so that $\pi^*\alpha=df$, with $f$ a basic function on $D_S\cap R_{i-1}$. As $i_Xdf=i_X(\pi^*\alpha)=0=i_Xf$ for every $X\in\ker\pi_*$, we have that $f$ is $\pi$-basic, and so there exists $g\in C^{\infty}(S)$ so that $f=\pi^*g$. Hence $\pi^*(\alpha)=d\pi^*g=\pi^* dg$, which yields $\alpha=dg$, because $\pi$ is a submersion. By degree reasons, we have that $g$ is $\K$-basic and thus $[\alpha]=[dg]=0$ in $H^1(S/\K)$. We get that $\pi^*$ is injective, which ends the proof.
\end{proof}

\setcounter{section}{1}
\setcounter{theorem}{0}
\begin{theorem}[{\bf bis}.]
	Let $\K$ be an SRF on a closed manifold $X$. Then  there exists a unique class $\kappab_X\in H^1(X/\K)$ that contains the \'Alvarez class of each stratum. More precisely, the restriction of $\kappab_X$ to each stratum $S$ is the \'Alvarez class of $(S,\K_S)$.
\end{theorem}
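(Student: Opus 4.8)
The plan is to combine the local result (Proposition~\ref{prop:depth1}, extended to all singular strata by the CERF machinery of Section~\ref{sec:local structure}) with the injectivity statement of Proposition~\ref{prop:mono-regular} in order to \emph{first} establish uniqueness, and \emph{then} build the global class by an induction on depth using Mayer--Vietoris. Uniqueness is immediate: if $\kappab_X$ and $\kappab_X'$ both restrict to the \'Alvarez class of every stratum, then in particular their restrictions to the regular stratum $R$ agree, and since $\iota_R\colon H^1(X/\K)\rightarrowtail H^1(R/\K)$ is a monomorphism by Proposition~\ref{prop:mono-regular}, we get $\kappab_X=\kappab_X'$. Note also that, once existence is known, the restriction of $\kappab_X$ to \emph{any} stratum $S$ is forced to be $\kappab_{\K_S}$: indeed $\K_S$ is a CERF whose \'Alvarez class, by \ref{subsec:tautCERF}(iii), is compatible with restriction to saturated open subsets, and $\kappab_{\K_R}$ restricted to the saturated open subset $S\cap(\text{tube})\cap R$ recovers $\pi^*\kappab_S$, which by the injectivity argument inside diagram~\eqref{eq:tubito-morro} determines $\kappab_S$. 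So the whole content is \textbf{existence}.

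To construct $\kappab_X$, I would work with the filtration $R=R_1\subset R_2\subset\cdots\subset R_{r+1}=X$ and prove by induction on $i$ the statement: \emph{there is a class $\kappab_{R_i}\in H^1(R_i/\K)$ whose restriction to each stratum $S\subset R_i$ is $\kappab_{\K_S}$, and whose restriction to $R_{i-1}$ is $\kappab_{R_{i-1}}$.} The base case $i=1$ is the classical \'Alvarez class of the CERF $(R,\K)$ from \ref{subsec:tautCERF}. For the inductive step, cover $R_i$ by the two saturated opens $R_{i-1}$ and $T=T_{r-i+1}$, the union of tubes around the strata of depth $r-i+1$ (there are finitely many connected components, and by Remark~\ref{rem:notrelated}/(TM1) the tubes around incomparable strata of the same depth are disjoint, so $T$ is a disjoint union of individual tubes $T_S$). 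On $R_{i-1}$ we have $\kappab_{R_{i-1}}$ by induction. On each $T_S$, since $\tau_S^*\colon H^1(S/\K)\to H^1(T_S/\K)$ is an isomorphism (diagram~\eqref{eq:tubito-morro}, using the contraction $H_S$ to see $T_S$ retracts onto $S$ foliatedly), I would define the candidate class on $T_S$ to be $\tau_S^*\kappab_{\K_S}$, where $\kappab_{\K_S}$ is the \'Alvarez class of the stratum $S=D_S$-base (a CERF by \ref{subsec:CERF}). To glue these via Mayer--Vietoris I must check that the two classes agree on the overlap $R_{i-1}\cap T_S=T_S\cap R_{i-1}$: by diagram~\eqref{eq:tubito-morro} the restriction of $\tau_S^*\kappab_{\K_S}$ to $T_S\cap R_{i-1}$ equals, under the isomorphism $\iota_D^*$, the class $\pi^*\kappab_S$ on $D_S\cap R_{i-1}$; on the other hand the restriction of $\kappab_{R_{i-1}}$ to $D_S\cap R_{i-1}$ — which sits inside the regular part $D_S\cap R$ — is, by Proposition~\ref{prop:depth1} (applied on the SRF $Y_{i-1}$ where $S$ has become minimal, using Proposition~\ref{prop:TSR} to know $\K_{D_S\cap R}$ is a CERF and Lemma~\ref{lema:esfera}/Lemma~\ref{lema:struct} to know the fibre foliation $\G_S$ is taut) equal to $\pi^*\kappab_S$ as well, plus the induction hypothesis that $\kappab_{R_{i-1}}$ restricts on the stratum $S$ (inside $R_{i-1}$? — no, $S\not\subset R_{i-1}$) — here one must be slightly careful: $S$ itself is \emph{not} in $R_{i-1}$, so the agreement on the overlap has to be argued directly at the level of $D_S\cap R_{i-1}$, which is precisely what Proposition~\ref{prop:depth1} combined with \ref{subsec:tautCERF}(iii) delivers. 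Given the agreement on overlaps, the Mayer--Vietoris sequence~\eqref{eq:propiny-mv} — whose connecting-map portion shows $\phi$ is injective and identifies $H^1(R_i/\K)$ with the pairs of classes agreeing on the overlap — produces a unique $\kappab_{R_i}\in H^1(R_i/\K)$ restricting to $\kappab_{R_{i-1}}$ and to $\tau_S^*\kappab_{\K_S}$ on each piece. Its restriction to a stratum $S'\subset R_i$ is then $\kappab_{\K_{S'}}$: if $S'\subset R_{i-1}$ this is the induction hypothesis together with \ref{subsec:tautCERF}(iii); if $S'$ is one of the new (depth $r-i+1$) strata, then $S'=S$ is the base of $T_S$ and the restriction of $\tau_S^*\kappab_{\K_S}$ to $S$ is $\kappab_{\K_S}$ since $\tau_S\circ(\text{inclusion }S\hookrightarrow T_S)=\mathrm{id}_S$. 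Setting $\kappab_X=\kappab_{R_{r+1}}$ finishes existence.

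The main obstacle I anticipate is the overlap-compatibility verification in the inductive step, i.e. showing that $\kappab_{R_{i-1}}$ and $\tau_S^*\kappab_{\K_S}$ really do restrict to the same class on $T_S\cap R_{i-1}$. This is where \emph{all} the hypotheses are consumed: one needs $\K_{D_S\cap R}$ and $\K_{T_S\cap R}$ to be CERFs (Proposition~\ref{prop:TSR}), so that the \'Alvarez class of $(R,\K)=(R_1,\K)$ — more precisely of the larger CERF $R_{i-1}$ — restricts correctly to these saturated opens via \ref{subsec:tautCERF}(iii); one needs the sphere bundle $\pi\colon D_S\to S$ to be a thick foliated bundle with compact structure group (Lemma~\ref{lema:struct}) and \emph{taut fibre} $(\S^{n_S},\G_S)$ — tautness of $\G_S$ is not automatic, but $\G_S$ has no $0$-dimensional leaves (Lemma~\ref{lema:struct}), and one invokes the induction itself, or rather a descending induction on depth: the fibre $\G_S$ is an SRF of strictly smaller depth, so by the inductive hypothesis applied to it, it is \emph{cohomologically} taut, and by Lemma~\ref{lema:esfera} $H^1(\S^{n_S}/\G_S)=0$ forces its \'Alvarez class (which lives there) to vanish, hence $\G_S$ is taut — this is the subtle point where the induction must be organized so that the depth-$(r-i+1)$ stratum's fibre, of lower depth, is already covered. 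With tautness of the fibre in hand, Proposition~\ref{prop:thickasabrick} gives $\kappab_{\K_{D_S\cap R}}=\pi^*\kappab_{\K_S}$, and then the equality of the two candidate classes on the overlap reduces, via the commuting square~\eqref{eq:tubito-morro} and the isomorphism $\iota_D^*$, to the identity $\iota_{R_{i-1}}^*\kappab_{R_{i-1}}\big|_{D_S\cap R_{i-1}}=\pi^*\kappab_{\K_S}=\iota_D^*\bigl(\tau_S^*\kappab_{\K_S}\big|_{T_S\cap R_{i-1}}\bigr)$, both sides being the \'Alvarez class of the CERF $\K_{D_S\cap R_{i-1}}$ by uniqueness of the \'Alvarez class (\ref{subsec:tautCERF}(ii)). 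Once this compatibility is nailed down the rest is the formal Mayer--Vietoris bookkeeping above.
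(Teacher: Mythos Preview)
Your overall architecture---uniqueness via Proposition~\ref{prop:mono-regular}, then existence by induction along the filtration $R_1\subset\cdots\subset R_{r+1}$ using Mayer--Vietoris to glue $\kappab_{R_{i-1}}$ with $\tau_S^*\kappab_S$---is exactly the paper's. But two steps that you yourself flag as delicate are not actually closed.

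The first is organizational: your induction hypothesis concerns only the filtration of the fixed SRF $(X,\K)$ and says nothing about the fibre $(\S^{n_S},\G_S)$, which is an SRF on a \emph{different} manifold. The paper handles this with an explicit \emph{outer} complete induction on $r=\depth\SK$, inside of which the $i$-induction runs; since $\depth\G_S<r$, the theorem applies to the fibre, the global class $\kappab_{\S^{n_S}}$ exists, Lemma~\ref{lema:esfera} forces it to vanish, and hence the regular part $(R_{\S},\G_S)$ is taut. Your phrase ``or rather a descending induction on depth'' is the right instinct, but it must be set up before the $i$-induction starts.

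The second is a genuine gap in the overlap check. You want $\kappab_{R_{i-1}}\vert_{T_S\cap R_{i-1}}=\tau_S^*\kappab_S\vert_{T_S\cap R_{i-1}}$, and via $\iota_D^*$ of~\eqref{eq:tubito-morro} this lands in $H^1((D_S\cap R_{i-1})/\K)$. But every point of $D_S$ lies in a stratum strictly above $S$, hence of depth $\ge r-i+2$, so $D_S\cap R_{i-1}=D_S$; for $i>2$ this carries a genuinely \emph{singular} foliation, not a regular one, so ``the \'Alvarez class of the CERF $\K_{D_S\cap R_{i-1}}$'' is undefined and \ref{subsec:tautCERF}(ii)--(iii) cannot be invoked there. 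Proposition~\ref{prop:depth1} likewise yields an identity only in $H^1((D_S\cap R)/\K)$, not in $H^1(D_S/\K)$. The paper's fix is one more injectivity step: the argument of Proposition~\ref{prop:mono-regular}, applied now to the SRF on $T_S\cap R_{i-1}$, gives a monomorphism $H^1((T_S\cap R_{i-1})/\K)\rightarrowtail H^1((T_S\cap R)/\K)$, so it suffices to check equality after restricting all the way down to the \emph{regular} part $D_S\cap R$. There both sides are honestly computable: $\kappab_{R_{i-1}}\vert_{D_S\cap R}=\kappab_R\vert_{D_S\cap R}$ is, by~\ref{subsec:tautCERF}(iii), the \'Alvarez class of the CERF $\K_{D_S\cap R}$, and Proposition~\ref{prop:depth1} (whose proof does not actually use minimality of $S$) identifies this with $\pi^*\kappab_S$. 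Without this extra reduction to the regular part your compatibility argument does not close.
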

\setcounter{section}{6}
\setcounter{theorem}{3}
\begin{proof}
First notice that the unicity of $\kappab_X$ comes from the fact that it induces the Álvarez class of the regular part and from Proposition \ref{prop:mono-regular}. For the existence, 
we proceed by complete induction on $r=\depth\SK$. When $r=0$ the SRF is indeed an RF and the result follows trivially. If $r>0$, we assume that the theorem is true for $\depth\SK<r$ and prove it for $\depth\SK=r$. We shall construct inductively on $i\in\{0,1,\dots,r+1\}$ a form $\kappab_i\in H^1(R_i/\K)$ satisfying 
$$
\kappab_i\vert_{R_{i-1}}=\kappab_{i-1}\quad  \text{ and }\quad  \kappab_i\vert_S=\kappab_S\quad \forall S\subset R_i\backslash R_{i-1},
$$ 
and finish the proof by taking $\kappab_X=\kappab_{r+1}$. Suppose that we have constructed the classes $0=\kappab_0,\kappab_1,\dots,\kappab_{i-1}$  and let's construct $\kappab_i$. As in the proof of Proposition \ref{prop:mono-regular} we take the open covering $\{R_{i-1},T\}$ of $R_i$,  which  yields the exact sequence \eqref{eq:propiny-mv}. We shall prove that $\rho(\kappab_{i-1},\tau^*\kappab_{S})=0$ for each singular stratum $S\subset R_i\backslash R_{i-1}$, which would give $\kappab_i$ by the exactness of \ref{prop:mono-regular}, thus completing the induction.
By Proposition \ref{prop:mono-regular}, $\iota_i\colon H^1((R_{i-1}\cap T)/\K)\rightarrowtail H^1((T\cap R)/\K)$ is a monomorphism. So, if we prove that $\iota_i\circ\rho(\kappab_{i-1},\tau^*\kappab_{S})=0$, we are done. We have
\begin{equation}\label{eq:hayqueanular}
\iota_i\circ\rho(\kappab_{i-1},\tau^*\kappa_{S})=\kappab_{i-1}|_{R\cap T}-\tau^*\kappab_S|_{R\cap T}=\cdots=\kappab_{R}|_{R\cap T}-\tau^*\kappab_S|_{R\cap T},
\end{equation}
where $\kappab_R$ is the Álvarez class of $R=R_1$. Notice that the nullity of \eqref{eq:hayqueanular} can be checked on $H^1(D_S\cap R)/\K)$ via the isomorphism $\iota_D^*$ of \eqref{eq:tubito-morro}. There only remains to prove that
\begin{equation}\label{eq:anular-sobre-esfera}
\kappab_{R}\vert_{R\cap D_S} =\pi^* \kappab_S\in H^1((D_S\cap R)/\K).
\end{equation}
Let $(\S^{n_S},\G)$ be the fibre of the bundle $D_S$. Then $\depth\G<r$, and by induction hypothesis, there exists a class $\kappab_{\S}\in H^1(\S^{n_S}/\G)$ whose restriction to the regular part $R_{\S}$ of $\G$ is the Álvarez class of $(R_{\S},\G)$. By Lemma  \ref{lema:esfera}, we have that $\kappab_{\S}=0$, which yields $\kappab_{R_{\S}}=0$ and thus $(R_{\S},\G)$ is a taut CERF. Then by Proposition \ref{prop:depth1} we get \eqref{eq:anular-sobre-esfera} and the proof is complete.
\end{proof}

This theorem leads us to the following natural definitions:

\begin{definition}\label{def:tautnessclass-SRF}
Let $\K$ be an SRF on a compact connected manifold $X$. Then the {\em Álvarez class} of $(X,\K)$ is the unique class $\kappab_X\in H^1(X/\K)$ which induces the Álvarez class of every stratum of $\SK$. We shall say that an SRF is {\em cohomologically taut} if its Álvarez class is zero.
\end{definition}

Notice that although geometrical tautness cannot be achieved globally for an SRF (see \ref{subsec:tautnesscohomology}), we have been able to define cohomological tautness by means of a basic class (which may be regarded as a class in $H^1(X)$, by Remark \ref{remark:inyective}). The geometrical meaning of the cohomological tautness of an SRF must be interpretated individually on each stratum, as we summarize in the following theorem:

\begin{theorem}\label{th:carac}
Let $\K$ be an SRF on a compact manifold $X$. Then, the  following three statements are equivalent:
\begin{enumerate}[{\rm(a)}]
	\item The foliation $\K$ is cohomologically taut;
	\item The foliation $\K_S$ is taut for each stratum $S\in\SK$;
	\item The foliation $\K_R$ is taut.
\end{enumerate} 
\end{theorem}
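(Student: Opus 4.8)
The plan is to establish the cyclic chain of implications (a)$\Rightarrow$(b)$\Rightarrow$(c)$\Rightarrow$(a), leaning almost entirely on Theorem~\ref{theorem:main} and the cohomological characterization of tautness for CERFs summarized in \ref{subsec:tautCERF}. The conceptual content is that the single class $\kappab_X\in H^1(X/\K)$ of Definition~\ref{def:tautnessclass-SRF} restricts to the \'Alvarez class of \emph{every} stratum simultaneously, so vanishing of $\kappab_X$ is detected on, and forces vanishing on, all strata at once; the only genuinely nontrivial direction is recovering global information from the regular stratum alone.

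First I would prove (a)$\Rightarrow$(b). If $\K$ is cohomologically taut then $\kappab_X=0$ in $H^1(X/\K)$. For any stratum $S\in\SK$, the restriction map sends $\kappab_X$ to $\kappab_S$, the \'Alvarez class of the CERF $(S,\K_S)$ (by Theorem~\ref{theorem:main}), so $\kappab_S=0$. By \ref{subsec:tautCERF}(iv)(a), vanishing of the \'Alvarez class of a CERF is equivalent to its tautness, hence $\K_S$ is taut. The implication (b)$\Rightarrow$(c) is immediate, since the regular stratum $R$ is itself one of the strata in $\SK$ (indeed the maximal one).

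The main obstacle is (c)$\Rightarrow$(a): deducing that the \emph{global} class $\kappab_X$ vanishes from tautness of $\K_R$ alone. Here I would invoke Proposition~\ref{prop:mono-regular}, which gives a monomorphism $\iota_R\colon H^1(X/\K)\rightarrowtail H^1(R/\K)$ induced by the inclusion of the regular part. Since $\iota_R(\kappab_X)=\kappab_X\vert_R=\kappab_R$ (again by Theorem~\ref{theorem:main}, as $R$ is a stratum), and since $\K_R$ taut means $\kappab_R=0$ by \ref{subsec:tautCERF}(iv)(a), injectivity of $\iota_R$ forces $\kappab_X=0$, i.e.\ $\K$ is cohomologically taut. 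This closes the cycle and proves the three statements equivalent. The only place where real work has already been done is Proposition~\ref{prop:mono-regular}; granting it, the argument here is a short diagram chase through the restriction maps, and I would present it as such rather than reproving any tautness criterion.
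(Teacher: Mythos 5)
Your proposal is correct and matches the paper's own argument: the paper also treats (c)$\Rightarrow$(a) as the only nontrivial implication, deducing $\kappab_X=0$ from $\iota_R(\kappab_X)=\kappab_R=0$ together with the injectivity of $\iota_R$ from Proposition~\ref{prop:mono-regular}, while the remaining implications follow from Theorem~\ref{theorem:main} and the CERF tautness criterion exactly as you describe.
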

\begin{proof}
The only nontrivial implication is $({\rm c})\Longrightarrow ({\rm a})$, which follows because $\iota_R(\kappa_X)=\kappab_R=0$ and Proposition \ref{prop:mono-regular}.
\end{proof}

The following Corollary generalizes E.~Ghys' celebrated result about tautness of Riemannian foliations on simply connected spaces (see \cite[Th\'eor\`eme B]{ghys}:

\setcounter{section}{1}
\setcounter{theorem}{1}
\begin{corollary}[{\bf bis}.]\label{cor:gureghys}
Every SRF on a compact simply connected manifold $X$ is cohomologically taut.	
\end{corollary}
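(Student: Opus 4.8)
The plan is to deduce this corollary directly from Theorem~\ref{theorem:main} together with the injection of Remark~\ref{remark:inyective}. Since $X$ is compact and simply connected, $H^1(X)=0$, so by Remark~\ref{remark:inyective} the basic cohomology group $H^1(X/\K)$ embeds in $H^1(X)=0$, hence $H^1(X/\K)=0$. In particular the \'Alvarez class $\kappab_X\in H^1(X/\K)$ produced by Theorem~\ref{theorem:main} is automatically zero, so $\K$ is cohomologically taut by Definition~\ref{def:tautnessclass-SRF}.

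First I would invoke Theorem~\ref{theorem:main} to obtain the well-defined \'Alvarez class $\kappab_X\in H^1(X/\K)$. Then I would observe that $\pi_1(X)=0$ implies $H^1(X;\R)=0$ (by Hurewicz and the universal coefficient theorem, or simply because $H_1(X)$ is the abelianization of $\pi_1(X)$). Next I would apply Remark~\ref{remark:inyective}, which gives the monomorphism $H^1(X/\K)\rightarrowtail H^1(X)$ coming from the inclusion $\Omega^1(X/\K)\hookrightarrow\Omega^1(X)$; composing with $H^1(X)=0$ forces $H^1(X/\K)=0$. Finally, $\kappab_X$ lies in this trivial group, so $\kappab_X=0$, which is precisely the statement that $\K$ is cohomologically taut.

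There is essentially no obstacle here: every ingredient has already been established in the excerpt. The only point requiring the slightest care is making sure that ``simply connected'' is used at the level it is needed, namely to kill $H^1(X;\R)$ rather than anything finer; no orientability or transverse orientability hypotheses are required, precisely because the argument is purely cohomological and routes through the basic-to-ordinary comparison map. One could also phrase the conclusion geometrically via Theorem~\ref{th:carac}: cohomological tautness of $\K$ is equivalent to geometric tautness of $\K_R$ (and of every $\K_S$), so the corollary says in particular that on a compact simply connected manifold the regular foliation $\K_R$, and the restriction of $\K$ to each stratum, is geometrically taut, recovering Ghys' theorem in the stratified setting.

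\begin{proof}
By Theorem~\ref{theorem:main}, the \'Alvarez class $\kappab_X\in H^1(X/\K)$ is well defined. Since $X$ is simply connected, $H^1(X)=0$, so by Remark~\ref{remark:inyective} the monomorphism $H^1(X/\K)\rightarrowtail H^1(X)$ forces $H^1(X/\K)=0$. Hence $\kappab_X=0$, i.e., $\K$ is cohomologically taut.
\end{proof}
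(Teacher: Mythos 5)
Your proof is correct and is essentially identical to the paper's own argument: the paper also concludes $H^1(X/\K)=0$ from Remark~\ref{remark:inyective} together with $H^1(X)=0$ (simple connectedness), so that $\kappab_X=0$. Your write-up merely spells out the intermediate steps (existence of $\kappab_X$ from Theorem~\ref{theorem:main} and the vanishing of $H^1(X;\R)$) in more detail.
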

\setcounter{section}{6}
\setcounter{theorem}{5}
\begin{proof}
By Remark \ref{remark:inyective} we have $H^1(X/\K)=0$, and thus $\kappab_X=0$.
\end{proof}

\begin{remark}
Recall that Molino's desingularization $(\widetilde{X},\widetilde{\K})$ of $(X,\K)$ is an RF that is taut if and only if $\K_{R}$ is taut \cite[Remark 2.4.3]{tope1}. Nevertheless, Corollary \ref{cor:gureghys} cannot be proved directly from that fact, because the desingularization of a simply connected manifold may not be simply connected. Notice also, that, as a consequence of Theorem \ref{th:carac} and Corollary \ref{cor:gureghys}, the foliation induced on each stratum of a SRF on a simply connected compact manifold is a geometrically taut RF, which is not evident a priori.
\end{remark}

As in the regular case, cohomological tautness can be detected by using some other cohomological groups.  The first one is the twisted cohomology ${H}^{0}_{\kappa }(X/\K)$ where $\kappa$ is any representative of the Álvarez class ${\kappab}_{X}$.

\begin{proposition}
	Let $X$ be a connected compact manifold endowed with an SRF $\K$. The  following two statements are equivalent:
	\begin{enumerate}[{\rm(a)}]
		\item The foliation $\K$ is  cohomologically taut.
		\item The cohomology group ${H}^{0}_{\kappa}(X/\K)$ is isomorphic to $\R$.
	\end{enumerate}
Otherwise, ${H}^{0}_{\kappa}(X/\K)=0.$
\end{proposition}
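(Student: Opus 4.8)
The plan is to unwind the definition of $H^0_\kappa(X/\K)$ and to exploit that a twisted $0$-cocycle behaves like a parallel section of a flat line bundle, which forces it to be nowhere vanishing. Fix a basic representative $\kappa\in\Omega^1(X/\K)$ of the Álvarez class $\kappab_X$. Since there are no basic forms in negative degree, $H^0_\kappa(X/\K)$ has no coboundaries, so
$$
H^0_\kappa(X/\K)=\ker\bigl(d_\kappa\colon\Omega^0(X/\K)\to\Omega^1(X/\K)\bigr)=\set{f\in\Omega^0(X/\K) |  df=f\,\kappa}.
$$
First I would record the elementary integration identity: if $f$ satisfies $df=f\,\kappa$ and $\gamma\colon[0,1]\to X$ is any smooth path, then $f(\gamma(t))=f(\gamma(0))\exp\!\bigl(\int_0^t\gamma^*\kappa\bigr)$, since $\tfrac{d}{dt}f(\gamma(t))=\kappa(\dot\gamma(t))\,f(\gamma(t))$. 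Because $X$ is connected, this shows that a nonzero $f\in H^0_\kappa(X/\K)$ vanishes nowhere, hence has constant sign; replacing $f$ by $-f$ if necessary, we may assume $f>0$ on $X$.

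For the implication (b)$\Rightarrow$(a), together with the ``otherwise'' clause, I would argue contrapositively: if $H^0_\kappa(X/\K)\neq 0$, choose such an $f>0$; then $\log f\in\Omega^0(X/\K)$ and $d\log f=\kappa$, so $\kappa$ is exact in the basic complex and $\kappab_X=[\kappa]=0$, i.e.\ $\K$ is cohomologically taut. Thus if $\K$ is not cohomologically taut, then $H^0_\kappa(X/\K)=0$. Moreover, if $f_1,f_2$ are two nonzero twisted $0$-cocycles, both nowhere vanishing by the previous paragraph, then
$$
d\!\left(\frac{f_1}{f_2}\right)=\frac{f_2\,df_1-f_1\,df_2}{f_2^{2}}=\frac{f_1f_2\,\kappa-f_1f_2\,\kappa}{f_2^{2}}=0,
$$
so $f_1/f_2$ is a basic function with vanishing differential, hence constant on the connected $X$. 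Therefore $H^0_\kappa(X/\K)$ is at most one-dimensional, and so is exactly $\R$ whenever it is nonzero.

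For (a)$\Rightarrow$(b): if $\K$ is cohomologically taut, write $\kappa=dh$ with $h\in\Omega^0(X/\K)$. Then multiplication by $e^{h}$ is an isomorphism of complexes $(\Omega^*(X/\K),d)\to(\Omega^*(X/\K),d_\kappa)$, because $d_\kappa(e^{h}\omega)=e^{h}\,dh\wedge\omega+e^{h}\,d\omega-e^{h}\,dh\wedge\omega=e^{h}\,d\omega$. Hence $H^0_\kappa(X/\K)\cong H^0(X/\K)$, and the latter equals $\R$ since a basic function with $df=0$ is constant on the connected compact manifold $X$. Combining the two implications with the one-dimensionality observation yields both the equivalence and the final assertion.

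The argument is short; the only point that really deserves care is the integration identity in the first paragraph, i.e.\ recognising that the twisted condition $df=f\,\kappa$ is a flat-connection equation along paths and hence precludes zeros of $f$ on a connected $X$ — everything else is formal. One should also check that $\kappa$, being a basic representative of $\kappab_X\in H^1(X/\K)$, is an honest smooth $1$-form on $X$, so that the path integrals and $\log f$ make sense; this is immediate from the conventions on basic forms recalled in Section~\ref{subsec:CERF}.
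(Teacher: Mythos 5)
Your proof is correct, but it follows a genuinely different route from the paper. The paper deduces (b)$\Rightarrow$(a) from the machinery already built: by Theorem~\ref{th:carac} it suffices to show $\K_R$ is taut, which is detected by ${H}^{0}_{\kappa_R}(R/\K)\ne 0$ via the CERF characterization of \cite[Theorem 3.5]{tope2}, and this is reached through a restriction monomorphism ${H}^{0}_{\kappa}(X/\K)\rightarrowtail {H}^{0}_{\kappa_R}(R/\K)$ obtained as in Proposition~\ref{prop:mono-regular}; the dichotomy $0$ or $\R$ is likewise imported from the regular stratum. You instead argue entirely on $X$ and purely formally: the ODE identity $f(\gamma(t))=f(\gamma(0))\exp\bigl(\int_0^t\gamma^*\kappa\bigr)$ shows a nonzero twisted $0$-cocycle is nowhere vanishing on the connected $X$, whence $\kappa=d\log f$ is basic-exact (giving (b)$\Rightarrow$(a) directly from the definition of cohomological tautness, with no appeal to tautness of $\K_R$), the quotient $f_1/f_2$ argument gives $\dim H^0_\kappa\le 1$, and conjugation by $e^{h}$ gives (a)$\Rightarrow$(b); the paper's (a)$\Rightarrow$(b) step is the same conjugation observation, stated without the explicit isomorphism. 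Your version is more elementary and self-contained — it only uses that $X$ is connected, that $\kappa$ is a closed basic $1$-form, and that basic functions are stable under $\log$, $\exp$ and quotients (worth a one-line remark, since for an SRF basicness of a function just means being annihilated by vector fields tangent to the leaves, so this is immediate) — while the paper's version is shorter given its established results and keeps the statement visibly tied to the geometric tautness of the regular stratum.
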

\begin{proof}

We proceed in two steps.

${\rm(a)} \Rightarrow {\rm(b)}$. If $\K$ is cohomologically taut then $\kappab_{X} =
[\kappa]=0$. So,
${H}^{0}_{\kappa }(X/\K) \cong {H}^{0}(X/\K) = \R$.

${\rm(b)} \Rightarrow {\rm(a)}$. From Theorem \ref{th:carac} it suffices to prove that ${\K}_{R}$ is a taut foliation; that is, that
${H}^{0}_{\kappa_R}(R/\K)  \ne 0$ (cf. \cite[Theorem 3.5]{tope2}). Proceeding as in Proposition \ref{prop:mono-regular}, we get that the restriction $ \iota_R \colon {H}^{0}_{\kappa }(X/\K)  \to {H}^{0}_{\kappa_R}(R/\K)$ is a monomorphism. This gives (a).

Since ${H}^{0}_{\kappa_R}(R/\K) = 0 \hbox{ or } \R$ (cf. \cite[Theorem 3.5]{tope2}) then we get  $ {H}^{0}_{\kappa }(X/\K) =  0 \hbox{ or } \R $.
\end{proof}

The second one has been proved in \cite[Corollary 3.5]{topinthebic}. Recall that a singular stratum is a {\em boundary stratum\footnote{In \cite[p.~431]{topinthebic} there's a typo in one sign of the formula of the definition, which says  $\codim_M\F = \codim_{S'}\F_{S'} -1$.}} if there exists a stratum $S'$ satisfying $S\preccurlyeq   S'$ and  $\codim_X\K = \codim_{S'}\K_{S'} +1$. The union of boundary strata of $\K$ is denoted by $\partial(X/\K)$.

\begin{proposition}\label{prop:topindebic}
Let $X$ be a connected compact manifold endowed with a CERF $\K$ such that
 ${\K}_{R}$ is
transversally oriented.
Put $n = \codim {\K}_{R}$.
Then, the following three statements are equivalent:

\begin{enumerate}[{\rm(a)}]
	\item The foliation $\K$ is  cohomologically taut.
	\item The cohomology group ${H}^{n}(X/\K, \partial (X/\K))$ is isomorphic to $\R $.
	\item  The intersection cohomology group ${I\!H}^{n}_{\overline{p}}(X/\K)$ is isomorphic to $\R $,
	for any perversity $\overline{p} \leq \overline{t}$.
\end{enumerate}

\end{proposition}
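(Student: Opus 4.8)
The plan is to reduce the three-way equivalence to the already-established characterizations of tautness for CERFs and to the Poincaré–Lefschetz duality available for the (intersection) basic cohomology of an SRF. First I would recall that, by \cite[Theorem 3.5]{tope2} applied to the CERF $\K_R$, the tautness of $\K_R$ is equivalent to $H^n(R/\K)\neq 0$, hence, combined with Theorem~\ref{th:carac}, to the cohomological tautness of $\K$. So it suffices to identify, under the transverse orientability hypothesis, the groups $H^n(X/\K,\partial(X/\K))$ and $IH^n_{\overline p}(X/\K)$ with $H^n(R/\K)$ up to the stated isomorphism with $\R$.

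For the implication involving $(\mathrm{b})$, I would use the long exact sequence of the pair $(X/\K,\partial(X/\K))$ in basic cohomology together with a dimension/degree count: since $\partial(X/\K)$ is a union of singular strata and the codimension of $\K_R$ is $n$, the basic cohomology of each boundary stratum vanishes in degree $n$ (the transverse dimension of a boundary stratum being $n-1$), while the basic cohomology of the singular part away from the top degree contributes nothing in degree $n$; one then reads off $H^n(X/\K,\partial(X/\K))\cong H^n_c(R/\K)$, and by the CERF structure (a reppiz with compact closure) this equals $H^n(R/\K)$, which is $0$ or $\R$ by Poincaré duality for $\K_R$. The details here mirror the excision/Mayer–Vietoris bookkeeping carried out in the proof of Proposition~\ref{prop:mono-regular} and in \cite[Proposition 2.4]{tope2}.

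For the equivalence with $(\mathrm{c})$, I would invoke the Poincaré–Lefschetz duality theorem for the intersection basic cohomology of SRFs established in \cite{topinthebic}: for any perversity $\overline p\le\overline t$ one has $IH^k_{\overline p}(X/\K)\cong H^{n-k}(X/\K,\partial(X/\K))^*$ (or the dual pairing with complementary perversity), so in top degree $IH^n_{\overline p}(X/\K)\cong H^0(X/\K,\partial(X/\K))^*$; since $X$ is connected and $\partial(X/\K)$ is a proper closed saturated subset, $H^0(X/\K)\cong\R$ and one checks $H^0(X/\K,\partial(X/\K))\cong\R$ as well, giving $IH^n_{\overline p}(X/\K)\cong\R$ exactly when $H^n(X/\K,\partial(X/\K))\cong\R$, and $0$ otherwise. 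Thus $(\mathrm{b})\Leftrightarrow(\mathrm{c})$ is essentially a restatement of \cite[Corollary 3.5]{topinthebic}, and the whole proposition follows by chaining these identifications.

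The main obstacle I anticipate is the degree bookkeeping in the $(\mathrm{b})$ step: one must verify carefully that no contribution in degree $n$ comes from the non-boundary singular strata, which requires knowing that a \emph{non}-boundary singular stratum $S$ satisfies $\codim_{S}\K_S\ge n+1$ (equivalently its transverse dimension is $\le n-2$, or the relevant link cohomology vanishes), so that the Mayer–Vietoris sequences built from the tubes $T_i$ do not introduce spurious classes; this is where the hypothesis that $\K_R$ is transversally oriented and the precise definition of boundary stratum are genuinely used, and it is the point where I would be most careful to cite the exact statements from \cite{tope2} and \cite{topinthebic} rather than reprove them.
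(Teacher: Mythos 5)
The paper does not actually prove this proposition: it is quoted verbatim from \cite[Corollary 3.5]{topinthebic}, so the ``proof'' in the text is a citation, and only the easy link (a)$\Leftrightarrow$(tautness of $\K_R$) is available from the present paper via Theorem~\ref{th:carac} and \cite[Theorem 3.5]{tope2}. Your first step reproduces exactly that link and is fine. The rest of your sketch, however, has genuine gaps. For (c), the duality you invoke is not the one that holds: basic intersection cohomology of an SRF satisfies a Poincar\'e duality that pairs \emph{complementary} perversities and is \emph{twisted by the \'Alvarez class}, so in top degree $IH^{n}_{\overline{p}}(X/\K)$ is dual to a degree-zero group of the $\kappa$-twisted complex, which is $\R$ precisely when the \'Alvarez class vanishes --- this twisting is the entire mechanism by which the top group detects tautness. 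Your untwisted version, together with the claim $H^{0}(X/\K,\partial(X/\K))\cong\R$, would make (c) hold unconditionally; note also that the relative $H^{0}$ is $0$ (not $\R$) whenever $\partial(X/\K)\neq\emptyset$, and that Example~\ref{example:surgery} gives a non-taut SRF with no boundary strata and $H^{3}(X/\K)=0$, so by the proposition $IH^{3}_{\overline{p}}(X/\K)\not\cong\R$ there, contradicting the output of your argument.

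The (b) step has a similar problem. The chain $H^{n}(X/\K,\partial(X/\K))\cong H^{n}_{c}(R/\K)\cong H^{n}(R/\K)$ is unsupported: the relative complex consists of basic forms on all of $X$ vanishing on the boundary strata only, so non-boundary singular strata do contribute constraints that are not disposed of by a codimension count (a non-boundary singular stratum can perfectly well have $\codim_{S}\K_S=n-1$; being a boundary stratum is a condition on a stratum \emph{above} it, not on its own transverse dimension); and compactly supported basic cohomology is not preserved by the reppiz quasi-isomorphism, so ``CERF structure'' does not give $H^{n}_{c}\cong H^{n}$ (it fails already for the point foliation on an open interval). Closing these gaps is precisely the content of \cite{topinthebic}, where the comparison is carried out with the twisted duality and a careful analysis of the tubes; the correct way to write this step here is to cite that result, as the paper does, rather than to re-derive it by the Mayer--Vietoris bookkeeping of Proposition~\ref{prop:mono-regular}.
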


The inductive construction of the Álvarez classes $\kappab_i$ in the proof of Theorem \ref{theorem:main} implies that if $S'\preccurlyeq S$ and $\K_S$ is cohomologically taut, then $\K_{S'}$ must be cohomologically taut. In fact, the \'Alvarez class is an obstruction to foliatedly embed Riemannian foliations: it is not possible to foliatedly embed a non-taut RF in a taut SRF. As an application of Corollary \ref{cor:gureghys}, we can, for example, get that Carri\`ere's well-known non-taut Riemannian flow on the manifold $T^3_A$ (see \cite[Exemple I.~ D.~ 6]{carriere}) cannot be foliatedly embedded in any sphere with a SRF.

Nevertheless, both taut and non-taut strata can coexist in the same SRF, as the following example shows. 

\begin{example}(\cite[Section 3.5]{royotesis}) \label{example:surgery}
	Consider the unimodular matrix $A=\left(\begin{smallmatrix}2&1\\1&1\end{smallmatrix}\right)$ and $v=(v_1,v_2)$ one of its irrational slope eigenvectors. Denote by $\F_{v}$ the Kronecker flow induced on the torus $\T^2$, which can be naturally extended to the linear flow $\F_{\S^3}$ on $\S^3$ corresponding to the $\R$-action $\Phi_{\S^3}(t,(z_1,z_2))=(e^{i\cdot v_1t}\cdot z_1,e^{i\cdot v_2t}\cdot z_2)$. We consider the suspension of this action, that is, the $\R$-action on $\S^4=\Sigma\S^3$, given by $\Phi(t,[(z_1,z_2),s])=[\Phi(t,(z_1,z_2)),s]$. Its orbits define an SRF $\F_{\S^4}$ whose singular part consists of two fixed points: the North and South poles of $\S^4$. By Corollary \ref{cor:gureghys}, $\F_{\S^4}$ is a cohomologically taut singular Riemannian flow.

	The closures of the generic leaves of $\F_{\S^4}$ are tori of dimensions 1 and 2. Take two leaves $L_1$ and $L_2$ whose disjoint closures are foliated diffeomorphic to $(\T^2,\F_{v})$, and consider $K_1,K_2$ disjoint compact saturated neighbourhoods of $\overline{L_1}$ and $\overline{L_2}$. Then $(K_i,\F_{\S^4})$ is foliated diffeomorphic to $(\T^2\times \D^2,\F_v\times{\text{\{points\}}})$  for $i=1,2$ (see \cite[Proposition 3]{carriere}). Notice that $(\partial{K_i},\F_{\S^4})\cong(\T^3,\F_v\times{\text{\{points\}}})$, and denote by  $(K_i)^{\circ}$ the interior of $K_i$ for $i=1,2$. We construct the manifold
	$X=(\S^4\backslash((K_1)^{\circ}\cup(K_2)^{\circ}))/\sim$, where we have identified $\partial{K_1}$ and $\partial{K_2}$ by $(x_1,-)\sim (A\cdot x_2,-)$ for $x_1,x_2\in\T^2$. Equivalently, we can glue the boundaries of a handle $\T^3\times [0,1]$ to $\partial{K_i}$ using the identification $\sim$ and the identity, respectively, for $i=1,2$ (see Figure \ref{fig:ejemplo}). 
	
		\begin{figure}[t]
			\includegraphics{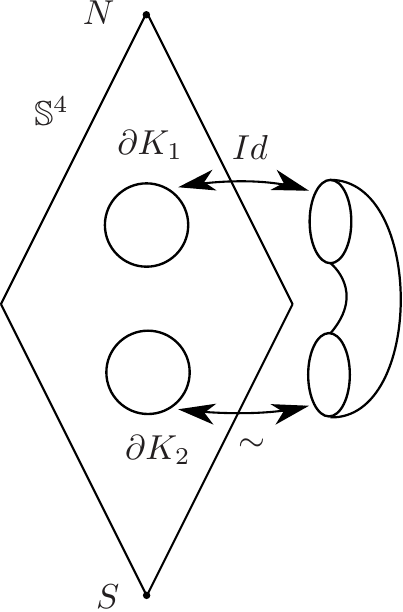}
			\caption{The ends of the handle are diffeomorphic to $\T^3$}
			\label{fig:ejemplo}
		\end{figure}

	 Notice that the described surgery is compatible with $\F_{\S^4}$ and denote by $\K$ the singular Riemannian flow induced in $X$. By construction, $\K$ has two singular strata, both of them fixed points, and thus, trivially endowed with taut foliations. None of them are boundary strata, and then $H^3(X/\K,\partial(X/\K))=H^3(X/\K)$. A straightforward Mayer-Vietoris computation gives $H^3(X/\K)=0$, and by (b) of Proposition \ref{prop:topindebic} we have that $\K$ is not cohomologically taut. 
\end{example}

The described surgery can be easily generalized to generate cohomologically non-taut singular Riemannian flows from cohomologically taut singular Riemannian flows.

\section{Appendix}

This appendix is devoted to proving Lemma \ref{lema:struct}.

\subsection{Some geometrical facts}

The following two lemmas will be useful to describe the local foliated structure of the charts of a tubular neighbourhood of a stratum.

\begin{lemma}\label{lem:appendix:embed}
Let $M$ and $M'$ be two manifolds, respectively endowed with
the SRFs $\mathcal{F}$ and $\mathcal{F}'$.
Consider     an embedding $F \colon M \times[0,1) \to M' \times
[0,1)$. If the restriction
\begin{equation}
\label{eq:hyp}
F\colon (M \times (0,1) , \mathcal{F} \times \mathcal{I})
\to (M' \times (0,1) , \mathcal{F}' \times \mathcal{I})
\end{equation}
is foliated then
\begin{equation*}
F \colon (M \times[0,1) , \mathcal{F} \times \mathcal{I})
\to (M' \times[0,1) , \mathcal{F}' \times \mathcal{I})
\end{equation*}
is also foliated.
\end{lemma}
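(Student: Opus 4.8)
The plan is to check directly that $F$ sends every leaf of $\mathcal{F}\times\mathcal{I}$ into a leaf of $\mathcal{F}'\times\mathcal{I}$. Since the leaves of $\mathcal{F}\times\mathcal{I}$ on $M\times[0,1)$ are exactly the sets $L\times\{t\}$ with $L$ a leaf of $\mathcal{F}$ and $t\in[0,1)$, and the hypothesis \eqref{eq:hyp} already gives the claim for $t>0$, the only leaves left to handle are those of the form $L\times\{0\}$. Fix such a leaf, fix $x\in L$, and set $w=F(x,0)$ and $\Lambda=$ the leaf of $\mathcal{F}'\times\mathcal{I}$ through $w$. The goal is $F(L\times\{0\})\subseteq\Lambda$.

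The key is a local step: \emph{for every $x'\in L$, the map $F$ sends a suitable neighbourhood of $(x',0)$ in $L\times\{0\}$ into the leaf of $\mathcal{F}'\times\mathcal{I}$ through $F(x',0)$.} To prove it I would use the local structure of SRFs (Molino \cite[Chapter 6]{molino}, see also \cite{boualem-molino}, and compare with the radius function $\rho_S$ of \ref{subsec:tubular}, which is constant along leaves): around $w'=F(x',0)$ there is a neighbourhood $W\subseteq M'\times[0,1)$ together with a continuous function $r\colon W\to[0,\infty)$ which is constant along the leaves of $(\mathcal{F}'\times\mathcal{I})|_W$ and such that $r^{-1}(0)$ is the plaque through $w'$ of the leaf $\Lambda'$ of $\mathcal{F}'\times\mathcal{I}$ containing $w'$. (Such an $r$ arises from the distinguished chart $W\cong U\times\Sigma$ along the leaf of $w'$, the infinitesimal foliation on the slice $\Sigma$ having its leaves inside geodesic spheres about the centre; one takes $r$ to be the distance in $\Sigma$ to the centre.) Now choose a relatively compact connected open neighbourhood $P$ of $x'$ in $L$ with $F(\overline{P}\times\{0\})\subseteq W$; by continuity of $F$ there is $\delta>0$ with $F(\overline{P}\times[0,\delta))\subseteq W$. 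For each fixed $t\in(0,\delta)$, the set $P\times\{t\}$ lies in the leaf $L\times\{t\}$, hence $F(P\times\{t\})$ lies in a single leaf of $\mathcal{F}'\times\mathcal{I}$ by \eqref{eq:hyp}, so $r\circ F$ is constant on $P\times\{t\}$, equal to $r(F(x',t))$. Letting $t\to0^+$ and using the continuity of $r\circ F$ on $\overline{P}\times[0,\delta)$ together with $r(F(x',t))\to r(w')=0$, we obtain $r(F(p,0))=0$ for all $p\in P$, i.e. $F(P\times\{0\})\subseteq r^{-1}(0)\subseteq\Lambda'$. This is the local step.

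To conclude I would run an open--closed argument on the connected manifold $L$. Put $A=\{\,x'\in L : F(x',0)\in\Lambda\,\}$. It is nonempty, since $x\in A$. It is open: if $x'\in A$ then the leaf of $\mathcal{F}'\times\mathcal{I}$ through $F(x',0)$ is $\Lambda$, so by the local step a whole neighbourhood of $x'$ in $L$ lies in $A$. It is closed: if $x'_n\to x'$ in $L$ with $x'_n\in A$, the local step at $x'$ forces $F(x'_n,0)$ to lie in the leaf through $F(x',0)$ for $n$ large; since also $F(x'_n,0)\in\Lambda$ and distinct leaves are disjoint, the leaf through $F(x',0)$ must be $\Lambda$, so $x'\in A$. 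Therefore $A=L$, i.e. $F(L\times\{0\})\subseteq\Lambda$. As $L$ was an arbitrary leaf of $\mathcal{F}$, $F$ maps every leaf of $\mathcal{F}\times\mathcal{I}$ into a leaf of $\mathcal{F}'\times\mathcal{I}$, which is the assertion.

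The step I expect to be the real obstacle — and the only place where the Riemannian hypothesis is genuinely used — is the local step: one cannot simply pass to the limit $t\to0^+$ in the tangency relation $F_{*}(T(\mathcal{F}\times\mathcal{I}))\subseteq T(\mathcal{F}'\times\mathcal{I})$, because being tangent to a singular foliation is not a closed condition (concentric circles degenerating onto their common centre being the model example). The existence of the leaf-constant radius-type function $r$ vanishing exactly on the central plaque, which is a manifestation of the local linearisation of SRFs, is exactly what tames the behaviour of the leaves of $\mathcal{F}'\times\mathcal{I}$ near $F(x',0)$ and renders the limit innocuous; note that the embedding hypothesis on $F$ itself plays no essential role beyond guaranteeing that $F$ and its restriction are honest smooth maps.
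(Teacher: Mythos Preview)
Your argument is correct and complete, but it follows a genuinely different route from the paper's.

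The paper proceeds by induction on $\depth\SF$. The base case of regular foliations is dispatched by observing that tangency to a regular foliation is a closed condition (the tangent distribution is an honest subbundle), so the foliated property extends by continuity. For the inductive step, the paper shows that each minimal stratum $S$ of $\mathcal{F}$ is carried by $F$ into a single stratum $S'$ of $\mathcal{F}'$: here the embedding hypothesis is used essentially, since openness of $F(M\times[0,1))$ is what forbids $F(S\times\{0\})$ from touching a strictly smaller stratum $S'_0\prec S'$. One then removes the union of minimal strata, lowering the depth, and invokes the induction hypothesis; on the minimal strata themselves the restricted foliations are regular and the base case applies.

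Your approach bypasses the stratification entirely and works leaf by leaf via the local radius function coming from the homothetic transformation lemma. The limit argument with $r\circ F$ is exactly the right substitute for the failure of the naive continuity argument in the singular setting, and your open--closed argument on the leaf is clean. Two observations worth making explicit: first, your proof does not use that $F$ is an embedding (only that it is smooth, hence continuous), so you have actually proved a slightly stronger statement than the paper; second, your radius function $r$ is a~priori only constant on \emph{plaques} (connected components of leaf~$\cap\,W$), but since $F(P\times\{t\})$ is connected this suffices. The trade-off is that the paper's argument keeps the Riemannian input hidden inside the stratification machinery and reduces everything to the elementary regular case, whereas yours invokes the local linearisation of an SRF more directly.
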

\begin{proof}
Notice, on one hand, that when $\F$ and $\F'$ are regular, the result
follows directly from the local description of $F$.

Consider,  on the other hand, $S\in \SF$ a minimal stratum. From
\eqref{eq:hyp}
there exists $S'
\in \SFp$ with $F(S \times (0,1)) \subset S' \times (0,1)$ and
therefore
$F(S \times[0,1)) \subset \overline{S'} \times[0,1)$. We claim that
$$F(S \times[0,1)) \subset S' \times[0,1).$$ For that purpose, let
us suppose that
there exists $S'_0 \in \SFp$ with $S'_0 \prec S'$ and $F(S \times \{
0 \}) \cap (S'_0 \times \{
0 \}) \not= \emptyset$.
Since $F(M \times[0,1)) $ is an open subset of $M' \times[0,1)$
then $F(M \times (0,1)) \cap (S'_0 \times (0,1) )\not= \emptyset$.
But this is not possible since the map
$
F\colon (M \times (0,1) , \mathcal{F} \times \mathcal{I})
\to (F(M \times (0,1) ), \mathcal{F}' \times \mathcal{I})
$
is a foliated diffeomorphism and $S \times (0,1)$ a minimal stratum
of
${\sf S}_{\mathcal{F} \times \mathcal{I}}$.

We proceed now by induction on $\depth \SF$. If  $\depth \SF=0$,
then
$\mathcal{F}$ is a regular foliation and the above considerations
yield
$F(M \times[0,1)) \subset R' \times[0,1)$, where $R'$ is a regular
stratum of $\SFp$. We get the result since the two foliations are
regular.

Now, if $\depth \SF>0$, denote $S_{min}$
the union of closed strata of $\mathcal{F}$. By induction hypothesis,
the restriction
$$
F \colon ((M - S_{min}) \times[0,1), \mathcal{F} \times \mathcal{I})
\longrightarrow (M'
\times[0,1) , \mathcal{F}' \times \mathcal{I})
$$
is a foliated map. Consider now $S \in \SF
$ a singular stratum. We have seen that there exists $S'\in \SFp$
with
$F(S \times[0,1)) \subset S' \times[0,1)$. It remains to prove that
$$
F\colon (S \times[0,1), \mathcal{F} \times \mathcal{I}) \longrightarrow (S'
\times[0,1) , \mathcal{F}' \times \mathcal{I})
$$
is a foliated map. It follows, since $\F$ and $\F'$ are regular.
\end{proof}

\begin{lemma}
\label{lem:appendix:borde}
Let $M$ and $M'$ be two manifolds, respectively endowed with
the SRFs $\mathcal{F}$ and $\mathcal{F}'$. Consider an embedding
$f\colon M\times\R^{n+1}\to M'\times\R^{n+1}$ with $f(M\times\{0\})\subset M'\times\{0\}$. Then  there exists a unique embedding
$$
F\colon M\times\S^n\times[0,1)\longrightarrow M'\times\S^n\times[0,1)
$$
making the following diagram commutative
$$
\xymatrix{
	M \times  \S^{n }  \times[0,1)\ar[r]^F \ar[d]_Q &  M' \times
	\S^{n }  \times[0,1)\ar[d]_Q\\
	M \times  \R^{n+1}\ar[r]^f & M' \times  \R^{n+1}}
$$
where
the smooth map $Q$ is defined by $Q(u,\theta,t) = (u,t\cdot\theta)$. Moreover, consider SRFs 
$\E$ and $\G$ on $\R^{n+1}$ and $\S^n$, respectively, such that 
$
P \colon \left( \S^{n } \times (0,1) ,{\mathcal{G}}
\times
\mathcal{I} \right)\to \left (\R^{n+1} \backslash \{ 0 \},
\E\right),
$
defined by $P(\theta,t)=t\cdot \theta$, is a foliated diffeomorphism and $\mathcal{I}$ denotes the foliation by points.
Suppose that the embedding
$f \colon (M \times \R^{n+1} , \F \times {\mathcal{E}})
\to (M' \times \R^{n+1} , \F' \times {\mathcal{E}})$ is foliated. Then  
$$
F \colon (M \times  \S^{n } \times[0,1) , \F
\times {\mathcal{G}}  \times \mathcal{I}) \to
(M' \times  \S^{n } \times[0,1) , \F'
\times {\mathcal{G}} \times \mathcal{I})
$$
is a foliated embedding.
\end{lemma}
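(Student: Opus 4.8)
The plan is to pin down $F$ first on the dense locus $\{t>0\}$, where commutativity of the square leaves no choice, then to analyse the extension of $F$ across the collar $\{t=0\}$, and finally to deduce the foliated statement from Lemma~\ref{lem:appendix:embed}. On $\{t\in(0,1)\}$ the map $Q$ restricts to a diffeomorphism onto $M\times(\D^{n+1}\setminus\{0\})$, and likewise the analogous blow-down $Q'$ on the target side; since $t\cdot\theta\neq0$ for $t>0$, the square forces
$$
F|_{M\times\S^{n}\times(0,1)}=(Q')^{-1}\circ f\circ Q,
$$
which is defined on a neighbourhood of the collar (and globally when $f$ preserves fibre norms, as in the setting of Lemma~\ref{lema:struct}). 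Density of $M\times\S^{n}\times(0,1)$ gives at once the uniqueness of $F$, and on that locus the displayed composition is visibly a smooth embedding; so the only real work in the first assertion is to extend $F$ smoothly, and as an embedding, across $\{t=0\}$.

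For that extension — the main obstacle — write $f(u,v)=(\widehat f(u,v),g(u,v))$ with $g(u,0)\equiv0$, and use Hadamard's lemma to factor $g(u,v)=\beta(u,v)\,v$ with $\beta$ smooth, matrix-valued, and $\beta(u,0)=\partial_{v}g(u,0)$. For $t>0$ the formula above becomes
$$
F(u,\theta,t)=\Bigl(\widehat f(u,t\theta),\ \frac{\beta(u,t\theta)\theta}{|\beta(u,t\theta)\theta|},\ t\,|\beta(u,t\theta)\theta|\Bigr),
$$
which extends smoothly to $\{t=0\}$ precisely when $\beta(u,t\theta)\theta$ has no zero near the collar. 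This is where the embedding hypothesis enters: for $t>0$ the non-vanishing is equivalent to $g(u,t\theta)\neq0$, and on $\{t=0\}$ it is the invertibility of the normal derivative $\partial_{v}g(u,0)$ — immediate in the equidimensional, norm-preserving situation of Lemma~\ref{lema:struct}, and in general read off from the block-triangular shape of $df$ along $M\times\{0\}$; after rescaling the radius coordinate one may assume the non-vanishing holds on all of $M\times\S^{n}\times[0,1)$. Granting it, the displayed formula exhibits $F$ as a bijective immersion onto its image, hence the desired embedding; I expect the routine but slightly fiddly point to be checking that $F$ is an immersion exactly along $\{t=0\}$, in the standard blow-up chart $(u,\theta,t)$.

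For the foliated refinement, assume $f$ is foliated for $\F\times\E$. Writing $Q=\mathrm{id}_{M}\times P$ and using that $P\colon(\S^{n}\times(0,1),\G\times\I)\to(\R^{n+1}\setminus\{0\},\E)$ is a foliated diffeomorphism by hypothesis, $Q$ restricts to a foliated diffeomorphism $(M\times\S^{n}\times(0,1),\F\times\G\times\I)\to(M\times(\R^{n+1}\setminus\{0\}),\F\times\E)$, and similarly $Q'$; hence on $\{t>0\}$ the composition $F=(Q')^{-1}\circ f\circ Q$ is foliated for $\F\times\G\times\I\to\F'\times\G\times\I$. Finally apply Lemma~\ref{lem:appendix:embed} with $M\times\S^{n}$, $M'\times\S^{n}$ and the foliations $\F\times\G$, $\F'\times\G$ in the roles of $M$, $M'$, $\F$, $\F'$: $F$ is a smooth embedding by the previous step and is foliated over $\{t>0\}$, so that lemma promotes it to a foliated embedding over the whole $[0,1)$, which is the assertion.
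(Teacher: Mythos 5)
Your proposal is correct and follows essentially the same route as the paper: uniqueness by density of the $t>0$ locus where $Q$ restricts to a diffeomorphism, existence via the very same explicit formula (the paper's $h(u,\theta,t)=f_1(u,t\cdot\theta)/t$ is exactly your $\beta(u,t\theta)\theta$, normalized to produce the sphere and radius coordinates), invertibility of the normal derivative $G_u=f_1(u,-)_{*0}$ to handle the collar $\{t=0\}$, and foliatedness on $\{t>0\}$ via $P$ and $Q$ followed by an application of Lemma~\ref{lem:appendix:embed} to cross $\{t=0\}$. The only difference is one of detail, not of method: the paper makes explicit the injectivity of $F$ along $\{t=0\}$ (each fibre map $\theta\mapsto G_u(\theta)/\|G_u(\theta)\|$ is a diffeomorphism of the sphere, and distinct fibres stay distinct because $f$ is an embedding), whereas you defer this as a routine check.
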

\begin{proof}
	We proceed in several steps.
	
	\smallskip
	
	$\bullet$ {\em Uniqueness}. It follows from the density of $M \times
	\S^{n }  \times
	(0,1)$ in $M \times
	\S^{n }  \times
	[0,1)$  and from the fact that the restriction
	$Q \colon M \times  \S^{n }  \times
	(0,1)
	\to M \times (\R^{n+1}  \backslash \{ 0 \})$ is a
	diffemorphism.
	
	\smallskip
	
	$\bullet$  {\em Existence}. Write $f=  (f_{0},f_{1})\colon M \times
	\R^{n+1} \to
	M'\times  \R^{n+1}$. The components $f_{0}$ and $f_{1}$ are
	smooth with
	$f_1(-,0) =0$.
	So  the map $h \colon M \times
	\S^{n }  \times[0,1) \to \R^{n_S+1}$ defined by
	$h(u,\theta,t) =
	f_1(u,t \cdot \theta)/t$  is smooth and
	without zeroes. Finally, we define $$F(u,\theta,t) = \left(f_{0}(u,t
	\cdot
	\theta),\frac{h(u,\theta,t)}{|| h(u,\theta,t)||},
	t \cdot || h(u,\theta,t)||\right).$$
	
	\smallskip
	
	$\bullet$  {\em Embedding}. Since $f = (f_{0},f_1)$ is an embedding
	with $f_1(-,0)=0$, then each restriction
	$f_1(u,-) \colon \R^{n+1} \to \R^{n+1}$ is an
	embedding.
	Put $G_u=f_1(u,-)_{*0}$  its differential at 0, which is an isomorphism. By
	construction we have $h(u,\theta,0) = G_u (\theta)/||G_u (\theta)||$.
	So 
	each restriction $F \colon \{ u \} \times \S^{n }  \times \{ 0
	\} \to
	\{ f_{0}(u,0) \} \times \S^{n }  \times \{ 0 \}$ is a
	diffeomorphism.
	
	Now, consider $(u_i,\theta_i)\in M \times
	\S^{n }$ for $i=1,2$ such that $F(u_1,\theta_1,0)=F(u_2,\theta_2,0)$.
	Since $f $  is an embedding  both  $(u_i,\theta_i,0)$  live on the same fibre $\{ u \}
	\times \S^{n }  \times \{ 0\}$. Since $G_u$ is  an
	isomorphism, we	get that $F$ is an embedding.
	
	\smallskip
	
	$\bullet$  {\em Foliated}.  The restriction
	$ F \colon (M \times \S^{n } \times (0,1) , \F \times
	{\mathcal{G}} \times \mathcal{I}) \to
	(M' \times \S^{n } \times (0,1) , \F' \times
	{\mathcal{G}} \times \mathcal{I})
	$
	is a foliated embedding since
	the restriction
	$
	Q \colon (M \times \S^{n } \times (0,1) , \F \times
	{\mathcal{G}} \times \mathcal{I})
	\to
	(M \times \R^{n +1} \backslash \{ 0 \} , \F \times
	{\mathcal{E}} ) $
	is a foliated diffeomorphism.
	Now, it suffices to apply Lemma  \ref{lem:appendix:embed} to $F$.\qedhere
\end{proof}

\subsection{Lifting of charts}

Assume the notation of Sections \ref{subsec:tubular}-\ref{subsection.struct.group}, some of which we recall now.
As the foliation ${\mathcal{E}}_{S}$ of $\R^{n_S+1}$ is
invariant by 
homotheties, there exists a foliation ${\mathcal{G}}_{S}$ on
the
sphere ${\S}^{ n_S  }$ such that the map $P \colon \left( \S^{ n_S  } \times (0,1) ,{\mathcal{G}}_{S}
\times
\mathcal{I} \right)\to \left (\R^{n_S+1} \backslash \{ 0 \},
{\mathcal{E}}_{S}\right),$ 
where $\mathcal{I}$ denotes the
0-dimensional foliation,  is a foliated diffeomorphism. The foliation ${\mathcal{G}}_{S}$ is an SRF (see \cite{molino}).

We put $\mathcal{A}$ and  $\mathcal{B}$  two atlases of the tubular
neighbourhood $\tau_S \colon T_S \to S$ whose
structure groups are, respectively, $O(n_S+1)$ and $\Diff
(\D^{n_S+1},{\mathcal{E}}_{S})$. Recall the foliated smooth map
$
{\mathscr{L}}_{S} \colon (D_S \times[0,1) , \mathcal{K}
\times
\mathcal{I})\to (T_S, \mathcal{K})
$
defined by
${\mathscr{L}}_{S} (z,t) = 2t \cdot z$.
The restriction
$
{\mathscr{L}}_{S} \colon (D_S \times (0,1) , \mathcal{K}
\times
\mathcal{I})\to (T_S\backslash S, \mathcal{K})
$
is a foliated diffeomorphism. Notice that any chart $(U,\psi) \in \mathcal{A}$ satisfies $\psi(v, t \cdot \theta ) = t \cdot \psi(v,\theta)$ and induces a lifted diffeomorphism $\overline{\psi}$ that makes the following diagram commute:
\begin{equation}\label{eq:psi-liftable}
\xymatrix{
	U \times  \S^{ n_S  } \times [0,1)
	\ar[r]^{\overline{\psi}}
	\ar[d]_Q&
	D_S \times[0,1)
	\ar[d]_{\mathscr{L}_S}\\
	U\times  \D^{n_S+1}
	\ar[r]^{\psi}
	&T_S,}
\end{equation}
where $\overline{\psi}(u,\theta,t) = (\psi(u,1/2 \cdot \theta),t).$ The following lemma proves that the same lifting property holds for the charts of $\mathcal{B}$.
\begin{lemma}
\label{lem:appendix:diag}
Let $(U,\phii)$ be a chart of $\mathcal{B}$. Then there exists a foliated embedding 
$\overline{\phii}
\colon ( U \times  \S^{ n_S  }  \times[0,1),\mathcal{K} \times
{\mathcal{G}}_{S} \times \mathcal{I}) \to
(D_S\times[0,1),\mathcal{K} \times \mathcal{I})
$
making the following diagram commute:
$$
\xymatrix{
	U\times  \S^{ n_S  } \times [0,1)
	\ar[r]^{\overline{\phii}}
	\ar[d]_Q&
	D_S \times[0,1)
	\ar[d]_{\mathscr{L}_S}\\
	U\times  \D^{n_S+1}
	\ar[r]^{\phii}
	&T_S.}
$$
\end{lemma}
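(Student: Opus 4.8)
The plan is to reduce the case of a chart $(U,\varphi)$ of $\mathcal{B}$ to the already-understood case of a chart of $\mathcal{A}$, by conjugating with the latter and then invoking Lemma~\ref{lem:appendix:borde}. First I would localize: on $U\times\S^{n_S}\times(0,1)$ both $Q$ and $\mathscr{L}_S$ restrict to diffeomorphisms, so any lift making the square commute is forced to equal $\mathscr{L}_S^{-1}\circ\varphi\circ Q$ there, hence, by continuity, is unique on all of $U\times\S^{n_S}\times[0,1)$. Consequently it suffices to construct $\overline\varphi$ over the members of an open cover of $U$ and glue. Shrinking $U$, I may therefore assume there is a chart $(U,\psi)\in\mathcal{A}$ over the same base; recall that $\psi$ is homothety-equivariant, $\psi(u,t\cdot\theta)=t\cdot\psi(u,\theta)$, that its explicit lift $\overline\psi(u,\theta,t)=(\psi(u,\tfrac12\theta),t)$ fits into diagram~\eqref{eq:psi-liftable}, i.e. $\mathscr{L}_S\circ\overline\psi=\psi\circ Q$, and that $\overline\psi$ is a foliated embedding $(U\times\S^{n_S}\times[0,1),\mathcal{K}\times\mathcal{G}_S\times\mathcal{I})\to(D_S\times[0,1),\mathcal{K}\times\mathcal{I})$ (for this last point one uses that $\psi$ carries $\mathcal{E}_S$ to $\mathcal{K}$ and that $\mathcal{G}_S$ is, by construction via the foliated diffeomorphism $P$, the foliation induced by $\mathcal{E}_S$ on the sphere of radius $\tfrac12$).

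Next I would set $g:=\psi^{-1}\circ\varphi\colon U\times\D^{n_S+1}\to U\times\D^{n_S+1}$. Since $\varphi$ and $\psi$ are foliated diffeomorphisms onto $\tau_S^{-1}(U)$ intertwining $\tau_S$ with the first projection and $\rho_S$ with the fibre norm, $g$ is a foliated diffeomorphism for $\mathcal{K}\times\mathcal{E}_S$ on both sides, it respects the fibres of the first projection and the fibre norms, and in particular $g(U\times\{0\})=U\times\{0\}$. I then apply Lemma~\ref{lem:appendix:borde} to $g$ (its statement, through the map $Q(u,\theta,t)=(u,t\cdot\theta)$ with $t\in[0,1)$, concerns precisely maps of the open disk $\D^{n_S+1}$, so no reparametrization is needed), obtaining a foliated embedding
\[
G\colon (U\times\S^{n_S}\times[0,1),\mathcal{K}\times\mathcal{G}_S\times\mathcal{I})\longrightarrow(U\times\S^{n_S}\times[0,1),\mathcal{K}\times\mathcal{G}_S\times\mathcal{I})
\]
with $Q\circ G=g\circ Q$.

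Finally I would define $\overline\varphi:=\overline\psi\circ G$. As a composition of foliated embeddings it is a foliated embedding $(U\times\S^{n_S}\times[0,1),\mathcal{K}\times\mathcal{G}_S\times\mathcal{I})\to(D_S\times[0,1),\mathcal{K}\times\mathcal{I})$, and
\[
\mathscr{L}_S\circ\overline\varphi=\mathscr{L}_S\circ\overline\psi\circ G=\psi\circ Q\circ G=\psi\circ g\circ Q=\psi\circ\psi^{-1}\circ\varphi\circ Q=\varphi\circ Q,
\]
which is exactly the asserted commutativity. By the uniqueness noted above, the lifts built over the various pieces of the cover agree on overlaps and patch to the desired $\overline\varphi$. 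The only genuine content is the behaviour across the singular locus $t=0$, where $Q$ and $\mathscr{L}_S$ cease to be immersions; this is handled entirely by Lemma~\ref{lem:appendix:borde}, so the main obstacle is bookkeeping — checking that $g$ really is a foliated embedding carrying the zero section to the zero section, and that the $\mathcal{A}$-lift $\overline\psi$ is foliated with respect to $\mathcal{G}_S$ — rather than any new estimate.
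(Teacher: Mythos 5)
There is a genuine gap, and it sits exactly at the point the whole appendix is designed to circumvent. Your argument derives the foliatedness of $\overline{\phii}=\overline{\psi}\circ G$ from the foliatedness of its two factors: you claim that $\overline{\psi}$ is a foliated embedding for $\mathcal{K}\times\mathcal{G}_S\times\mathcal{I}$ ``because $\psi$ carries $\mathcal{E}_S$ to $\mathcal{K}$'', and you apply the \emph{foliated} part of Lemma~\ref{lem:appendix:borde} to $g=\psi^{-1}\circ\phii$, which requires $g$ to be foliated for $\mathcal{K}\times\mathcal{E}_S$ on both sides. But the chart $(U,\psi)\in\mathcal{A}$ is only an orthogonal chart of the disk bundle; it is \emph{not} assumed to be foliated. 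Indeed, Section~\ref{subsection.struct.group} states explicitly that it is not known whether the structure group of $\mathcal{T}_S$ can be reduced to $\OO(n_S+1)\cap\Diff(\D^{n_S+1},\mathcal{E}_S)$, i.e.\ whether one can choose charts that are simultaneously orthogonal and foliated — if one could, Lemma~\ref{lema:struct} would be immediate. Since $\phii$ is foliated but $\psi$ need not be, neither $g$ nor $\overline{\psi}$ is known to be foliated, so both the hypothesis of the foliated part of Lemma~\ref{lem:appendix:borde} and the final ``composition of foliated embeddings'' step fail.

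The repair is the route the paper takes: use only the \emph{first} (non-foliated) part of Lemma~\ref{lem:appendix:borde} to lift the diffeomorphism $h=\psi^{-1}\circ\phii$ to an embedding $H$ of $U\times\S^{n_S}\times[0,1)$, set $\overline{\phii}=\overline{\psi}\circ H$, and check commutativity exactly as you did. Foliatedness is then proved for the \emph{composite} directly, with no claim about $\psi$, $\overline{\psi}$ or $H$ individually: on $U\times\S^{n_S}\times(0,1)$ one has $\overline{\phii}=\mathscr{L}_S^{-1}\circ\phii\circ Q$, which is foliated because $\phii$ is a $\mathcal{B}$-chart and the restrictions of $Q$ (via the foliated diffeomorphism $P$ defining $\mathcal{G}_S$) and of $\mathscr{L}_S$ are foliated diffeomorphisms there; then Lemma~\ref{lem:appendix:embed} extends foliatedness across $t=0$. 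Your uniqueness-by-density observation and the commutativity computation are correct and coincide with the paper's, but the foliatedness step as you wrote it does not go through.
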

\begin{proof}
Take a chart $(U,\psi)\in\mathcal{A}$. Notice that the lifted map $\overline{\psi}$ (see \eqref{eq:psi-liftable}) is an embedding and the composition $h=\psi^{-1}\circ\phii$ is a diffeomorphism. 
By the first part of Lemma \ref{lem:appendix:borde}, $h$ can be lifted to the diffeomorphism
$H \colon
U \times \S^{n_S}\times [0,1) \to U \times \S^{n_S}\times [0,1)$
and defining 
$\overline{\phii} =\overline{\psi} \rondp H $ we get the following commutative diagram:
$$
\xymatrix{
	U \times \S^{n_S}\times [0,1)\ar[d]_Q\ar[r]_H\ar@/^1.5pc/[rr]^{\overline{\phii}}  
	& U \times \S^{n_S}\times [0,1)\ar[r]_{\overline{\psi}}\ar[d]_Q 
	& D_S\times[0,1)\ar[d]^{\mathscr{L}_S}\\
	U\times \D^{n_S +1}\ar[r]^h \ar@/_1.5pc/[rr]_{\phii}
	& U\times \D^{n_S +1}\ar[r]^{\psi} & T_S.}
$$
The restriction
$ \overline{\phii} \colon (U\times \S^{ n_S  } \times (0,1) ,
\mathcal{K} \times
{\mathcal{G}}_{S} \times \mathcal{I}) \to
(D_S\times (0,1) , \mathcal{K} \times \mathcal{I})
$
is a foliated embedding since
the restrictions
$
Q \colon (U \times \S^{ n_S  } \times (0,1) , \mathcal{K} \times
{\mathcal{G}}_{S} \times \mathcal{I})
\to
(U \times \R^{n_S +1} \backslash \{ 0 \} , \mathcal{K}\times
{\mathcal{E}}_{S} ) $
and
$
{\mathscr{L}}_{S} \colon (D_S \times (0,1)) \to
(T_S\backslash S,
\mathcal{K})
$
are foliated diffeomorphisms (cf. 6.1).
Now, it suffices to apply the Lemma  \ref{lem:appendix:embed} to
$\overline{\phii}$. The uniqueness of $\overline{\phii}$ follows by density.
\end{proof}

\subsection{Proof of Proposition \ref{lema:struct}}
For each $(U,\phii) \in \mathcal{B}$ we define
$$
\overline{\overline{\phii}} \colon (U \times \S^{ n_S  },
\mathcal{K}
\times {\mathcal{G}}_{S}) \to (D_S, \mathcal{K})
$$
by
$
\overline{\overline{\phii}} (u,\theta) = \pr
\overline{\phii}(u,\theta,0),
$
where $\pr \colon D_{S }\times[0,1) \to D_S$ is the canonical
projection and $\overline{\phii}$ comes from Lemma \ref{lem:appendix:diag}.
Notice that $\overline{\overline{\phii}}$ is a foliated embedding. Since ${\mathscr{L}}_{S} (z,0) =
\tau_S(z)$ for each $z \in D_S$, we have
$$
\tau_S\overline{\overline{\phii}} (u,\theta)
=
\tau_S\pr
\overline{\phii}(u,\theta,0)
=
{\mathscr{L}}_{S} (\pr
\overline{\phii}(u,\theta,0),0)
=
{\mathscr{L}}_{S} \overline{\phii}(u, \theta , 0)
=
\phii Q(u,\theta,0)
=
\phii(u)= u.
$$
We conclude that $\mathcal{C} = \{
(U,\overline{\overline{\phii}}) \ | \ (U,\phii) \in \mathcal{B}) \}$
is an atlas of the sphere bundle ${\tau}_{s} \colon D_S \to
S$.
By construction, the  structure group preserves
${\mathcal{G}}_{S}$.
It remains to prove that it also belongs to the orthogonal group
$
O(n_S+1).
$

Consider $(U_{i},\overline{\overline{\phii_{i}}}),
(U_{j},\overline{\overline{\phii_{j}}}) \in \mathcal{C} $.
In the commutative diagram
$$
\xymatrix@+2pc{
U_{i}\cap U_{j}  \times  \S^{ n_S  }  \times [0,1)
\ar[r]^{\overline{\phii_{i}}^{-1}\rondp \overline{\phii_{j}}}
\ar[d]_Q
& U_{i}\cap U_{j} \times  \S^{ n_S  }  \times[0,1)
\ar[d]_Q\\
U_{i}\cap U_{j}\times  \D^{n_S+1} \ar[r]^{\phii_{i}^{-1} \rondp
\phii_{j}} 
& U_{i}\cap U_{j} \times  \D^{n_S+1} ,\\
}$$
(cf. Lemma \ref{lem:appendix:diag})
the two horizontal rows are foliated diffeomorphisms.
The top map is determined by the bottom map as described in the
proof of Lemma \ref{lem:appendix:borde}. So  for each $(u,\theta,0) \in U_{i}\cap
U_{j}  \times  \S^{ n_S  }   \times
[0,1)$ we get
$$
\overline{\phii_{i}}^{-1} \rondp \overline{\phii_{j}} (u,\theta,0)
=
(u, G_u (\theta)/||G_u (\theta)||,0).
$$
This equality yields
$$
\overline{\overline{\phii_{i}}}^{-1} \rondp
\overline{\overline{\phii_{j}}} (u,\theta)
=
(u, G_u (\theta)/||G_u (\theta)||).
$$
The linear map $G_{u}$ is the differential of the smooth map
$
f_{i,j} (u,-\! \! -) \colon
\D^{n_S+1} \to\D^{n_S+1},
$ where
$\phii_{i}^{-1} \rondp \phii_{j} (u,\theta) =
(u,f_{i,j}(u,\theta)).$
Since $(U_{i},\phii_{i}) \in \mathcal{B}$ then
$|\phii_{i}(u,v)| = |v|$ for each $(u,v) \in U \times
\D^{n_S+1} $. So  we get that $G_{u} \in O(n_S+1)$. We have finished, since
$$
\overline{\overline{\phii_{i}}}^{-1} \rondp
\overline{\overline{\phii_{j}}} (u,\theta)
=
(u, G_u (\theta)).\qed
$$

\bibliography{nirebib}

\bibliographystyle{nireamsalpha}

\end{document}